\def\singlespace{\def\baselinestretch{1}\@normalsize}
\renewcommand{\baselinestretch}{1.412}
\renewcommand{\theequation}{\arabic{section}.\arabic{equation}}
\newcommand{\vv}{\boldsymbol{v}}
\newcommand{\veee}{\tilde{\boldsymbol{e}}}
\newcommand{\vw}{\boldsymbol{w}}
\newcommand{\ve}{\boldsymbol{e}}
\newcommand{\vY}{\boldsymbol{Y}}
\newcommand{\vm}{\boldsymbol{m}}
\newcommand{\vx}{\boldsymbol{x}}
\newcommand{\vy}{\boldsymbol{y}}
\newcommand{\vu}{U}
\newcommand{\vbeta}{\boldsymbol{\beta}}
\newcommand{\valpha}{\boldsymbol{\alpha}}
\newcommand{\Hess}{\text{Hess}}
\newcommand{\hpca}{h_{\text{pca}}}
\newcommand{\NN}{\mathbb{N}}
\newcommand{\RR}{\mathbb{R}}
\newcommand{\EE}{\mathbb{E}}
\newcommand{\XX}{\mathbb{X}}
\newcommand{\WW}{\mathbb{W}}
\newcommand{\MM}{\text{M}}
\newcommand{\tr}{\mbox{tr}}
\newcommand{\Ric}{\mbox{Ric}}
\newcommand{\II}{\textup{II}}
\newcommand{\ud}{\textup{d}}
\newcommand{\supp}{\text{supp}}
\newcommand{\diag}{\text{diag}}
\newcommand{\argmin}{\operatornamewithlimits{argmin}}
\newcommand{\SSS}{\mathfrak{S}}
\newcommand{\QQQ}{\mathfrak{Q}}
\newcommand{\HHH}{\mathfrak{H}}
\newcommand{\EEE}{\mathfrak{E}}
\newtheorem{thm}{Theorem}[section]
\newtheorem{lem}[thm]{Lemma}
\newtheorem{col}{Corollary}[section]
\begin{document}

\pagestyle{empty}
\begin{center}
{\Large \bf
Local Linear Regression on Manifolds and its Geometric Interpretation
}
\vspace{0.1in}

 \centerline{\bf Ming-Yen Cheng and Hau-Tieng Wu}

\end{center}

\begin{abstract} 
High-dimensional data analysis has been an active area, and the main focuses have been variable selection and dimension reduction. In practice, it occurs often that the variables are located on an unknown, lower-dimensional nonlinear manifold. Under this manifold assumption, one purpose of this paper is regression and gradient estimation on the manifold, and another 
is developing a new tool for manifold learning.   To the first aim, we suggest directly reducing the dimensionality to the intrinsic dimension $d$ of the manifold, and performing the popular local linear regression (LLR) on a tangent plane estimate. An immediate consequence is a 
dramatic reduction in the computation time
when the ambient space dimension $p\gg d$. We provide rigorous 
theoretical justification of the convergence of the proposed regression and gradient estimators by carefully analyzing the curvature, boundary, and non-uniform sampling effects.  A bandwidth selector that can handle heteroscedastic errors is proposed.  
To the second aim, we analyze carefully the behavior of our regression estimator both in the interior and near the boundary of the manifold, and make explicit its relationship with manifold learning, in particular estimating the Laplace-Beltrami operator of the manifold. In this context, we also make clear that it is important to  use a smaller bandwidth in the tangent plane estimation than in the LLR. 
Simulation studies and the Isomap face data example are used to illustrate the computational speed and estimation accuracy of our methods. 
\end{abstract}

\noindent {\small \bf KEY WORDS}: diffusion map; dimension reduction; high-dimensional data; manifold learning; nonparametric regression.

\noindent {\small \bf SHORT TITLE}: Manifold Adaptive Regression And Manifold Learning

\begin{singlespace}
\begin{footnotetext}
{ Ming-Yen Cheng is Professor, Department of Mathematics, National Taiwan University, Taipei 106, Taiwan (Email: cheng@math.ntu.edu.tw). Hau-Tieng Wu is Postdoctoral Research Associate, Program in Applied and Computational Mathematics, Princeton University, Princeton, NJ 08544, USA (Email: hauwu@math.princeton.edu). Cheng's research was supported in part by the National Science Council grant NSC97-2118-002-001-MY3 and the Mathematics Division, National Center of Theoretical Sciences (Taipei Office). The authors like to thank Professor Peter Bickel for instructive comments.}
\end{footnotetext}
\end{singlespace}

\newpage

\pagestyle{plain}

\section{Introduction}

High-dimensional data arise frequently in many fields of the contemporary science. 
In addition, it is common that the sample size is small relative 
to the dimensionality of the data. Such intrinsically complex data structure introduces new challenges in
statistical analysis and inference, and requires innovative methods and theories \cite{fan_lv:2008,  hall_marron_neeman:2005}.
In this context, we focus on the regression problem, which plays an important role in understanding the relationship between the response variable and the predictors.
Conventionally, the probability density function (p.d.f.) of the predictor vector is assumed to be non-degenerate. In this case, variable selection and dimension reduction are fundamental issues and have been extensively studied 
\cite{fan_li:2001, fan_peng:2004, zhang_jiang:2010, fan_lv:2008, fan_song:2010, li_liang:2008,  xia:2007, xia:2008}.
However, these problems remain difficult in the nonparametric regression setting, because commonly the models are built in the ambient space and 
the curse of dimensionality is a serious issue \cite{lafferty_wasserman:2008, fan_feng_song:2011,zhu_li:2011}.

Recently, it has been noticed that, in practice, the predictor vector often takes on values in a lower-dimensional, nonlinear manifold.
More specifically, in the cryo Electron Microscopy problem \cite{frank:2006}, the images are located on the $3$-dimensional manifold $SO(3)$; in the radar signal example the data can be modeled as being sampled from the Grassmannian manifold \cite{chikuse:2003};  natural images are argued to be lying on a Klein bottle \cite{carlsson_Tigran:2008}; the general manifold model for image and signal analysis is considered in \cite{Gabriel:2009}; and  spherical, circular and oriental data are distributed on special types of manifolds \cite{mardia_jupp:2000}; to name but a few.
Based on the manifold assumption, in the past few years,  
numerous papers have been devoted to  learning the manifold, or more generally the underlying 
structure \cite{coifman_lafon:2006, lerman_zhang:2010, vdm}, and a few have addressed regression on manifolds \cite{pelletier:2006, bickel_li:2007, aswani_bickel:2011}.

In the manifold learning literature, the Nadaraya-Watson kernel regression estimator has been used to construct an estimator of the Laplace-Beltrami operator of the manifold; however, to avoid the boundary  blowup  problem, Neuman's boundary condition is required \cite{coifman_lafon:2006}. When the $p$-dimensional predictor is non-degenerate in $\RR^p$, it is well known that the asymptotic bias of the traditional LLR in the Euclidean setup is related to the Laplacian of the regression function  and that it alleviates the boundary effect \cite{ruppert_wand:1994}. Thus, it is interesting to see if these properties still hold for some properly constructed  LLR in the manifold setup, as it will enable us to obtain a new estimator for the Laplace-Beltrami operator of the manifold with a different boundary condition.

Besides, due to the rich geometric structure, when the predictors are concentrated on a manifold, regression models that taking into account the geometric structure of the manifold are intuitively appealing. 
In \cite{pelletier:2006,loubes_pelletier:2008} the kernel regression estimator is constructed directly on the manifold, using the true geodesic distance both in determining the nearest neighbors and in constructing the kernel weights. 
Another approach is to employ the usual LLR in the ambient space $\RR^p$ with  regularization imposed on the coefficients in the directions perpendicular to a tangent plane estimate \cite{aswani_bickel:2011}.
However, there are several interesting and important issues left unsolved.
First, although the idea of constructing kernel estimators on the manifold in \cite{pelletier:2006,loubes_pelletier:2008} is appealing, it is unrealistic to make use of the geodesic distance. It is non-trivial to construct LLR on the manifold without knowing the manifold structure.
Second, it remains unknown whether the methods in \cite{aswani_bickel:2011} alleviate the boundary effect, and it is not obvious whether the asymptotic biases have any connections with the Laplace-Beltrami operator of the manifold.
Third, when $p$ is large, fitting LLR in $\RR^p$ as in \cite{aswani_bickel:2011} can be computationally expensive even if regularization has been imposed. 
Fourth,  in \cite{aswani_bickel:2011} the bandwidth used in the tangent plane estimation is the same as the one employed in the LLR. It is unclear if we can benefit from using different bandwidths in these two steps.  
Fifth, the quantity ``exterior derivative $d_xf|_{x_0}$'' in \cite[(4.5)]{aswani_bickel:2011} is subtle and the details are missing. 
Furthermore, the topology of the embedded manifold, in particular, the condition number \cite{NSW}, is another important issue that needs to be taken care of.

Motivated by the above observations, in this paper, we explore further the Riemannian geometric structure of the manifold, in particular the tangent bundle structure, and construct the LLR directly on an estimate of the tangent plane to the manifold, without knowing the geodesic distance and manifold structure. 
Specifically, we first estimate the intrinsic dimension $d$, and deal with the condition number issue when determining the nearest neighbors using the Euclidean distance. Subsequently,  we obtain an estimate of the embedded tangent plane based on 
local principal 
component analysis (PCA). Finally, we construct the LLR on the tangent plane estimate using the coordinates of the nearest neighbors with respect to the orthonormal basis. 
We call our approach the Manifold Adaptive Local Linear Estimator for the Regression (MALLER).
In addition, we suggest a procedure for selecting the bandwidth in the regression step that can handle heteroscedastic errors, which arise often in practice. A consequence of the proposed MALLER is an estimator for the gradient and the Laplace-Beltrami operator of the manifold.

Throughout this paper the dimension $p$ is kept as a fixed number and we assume the predictors are observed without any noise. Thus, if the sample size $n$ is large enough compared to the intrinsic dimension $d$, the tangent plane can be estimated accurately so that the dimensionality of the data can be reduced from $p$ to $d$. Under this circumstance, the first consequence is a much more computationally efficient scheme when $p$ is large and $p\gg d$, since all the computations in the regression step depend only on $d$. Another consequence is the ability to handle the practical situations where $n$ is less than $p$, 
in which case no sparsity conditions like those in \cite{aswani_bickel:2011} are needed for MALLER to work. The isomap face data analysis illustrates these points. 

We provide detailed theoretical justification of the convergence of MALLER by carefully analyzing the curvature, non-uniform sampling and boundary effects. In particular, the MALLER and gradient estimators achieve the respective optimal rates of convergence pertaining to nonparametric regression on $d$-dimensional manifolds. 
In addition, the subtle relationship between the bandwidth used in the tangent plane estimation and the one  used in the LLR is made explicit: it is crucial that the former should be of a smaller order than the latter, otherwise larger biases are introduced in the LLR on the tangent plane estimate and in the Laplace-Beltrami estimator mentioned below.  This issue is particularly important when estimating the Laplace-Beltrami operator.
Moreover, 
MALLER enjoys both the automatic boundary correction and the design adaptive properties possessed by the LLR in the $\RR^d$ setup \cite{ruppert_wand:1994}. These properties have strong implications in manifold learning. 
In particular, if the manifold has a smooth boundary, the Laplace-Beltrami operator estimated by our method MALLER is different from the one estimated by employing the Nadaraya-Watson kernel method, in the sense that the two are under different boundary conditions.  
Since the main focus of this paper is regression on manifolds, further theoretical properties and applications of the new estimator of the Laplace-Beltrami operator are left as a future work.

 
The rest of this paper is organized as follows. 
The proposed MALLER algorithm and a bandwidth selection procedure are  introduced in Sections \ref{algorithm} and \ref{bandwidth} respectively. Asymptotic results for the conditional mean squared errors of MALLER and the gradient estimator in both the interior and boundary of the manifold are given in Section \ref{theory}. In Section \ref{numerics} we examine finite sample performance of MALLER and compare it with those of \cite{aswani_bickel:2011} through one simulation study and application to the isomap face dataset, and we demonstrate the efficacy of our gradient estimator via a simulated example. Section \ref{diffusionmap} gives a brief introduction of the diffusion map framework and discusses application of MALLER to estimating the Laplace-Beltrami operator of the manifold.
In Section \ref{discussion}, besides addressing the relationship between MALLER and the NEDE algorithm in \cite[(4.6)]{aswani_bickel:2011}, we discuss various 
related open questions and future directions in both regression on manifolds and manifold learning. Proofs of the theoretical results can be found in the Supplementary, which also contains a brief introduction to the exterior derivative, covariant derivative and gradient of a function on the manifold.

\section{Model and Estimation Procedure}\label{algorithm}

Let  $Y$ denote the scalar response variable and let $X$ be a $p$-dimensional random vector. 
Assume that the distribution of $X$ is concentrated on a $d$-dimensional compact, smooth Riemannian manifold $\MM$ embedded in $\RR^p$ via $\iota:\MM\hookrightarrow \RR^p$, where $\MM$ may have boundary. 
We consider the following regression model 
\begin{equation}\label{model1}
Y=m(\iota^{-1}(X))+\sigma(\iota^{-1}(X))\,\epsilon,
\end{equation}
where $\epsilon$ is a random error independent of $X$ with $\EE (\epsilon)=0$ and $\operatorname{Var}(\epsilon)=1$, and both the regression function $m$ and the conditional variance function  $\sigma^2$ are defined on $\MM$. 

Let  $\{(X_l,Y_l)\}_{l=1}^n$ denote a random sample observed from model (\ref{model1}) 
with $\mathcal{X}:=\{X_l\}_{l=1}^n$ being sampled from $X$. 
Then, given $x\in\MM$, the problem is to estimate nonparametrically
$m({x})$, and its higher order covariant derivatives at ${x}$ if $m$ is smooth enough, based on $\{(X_l,Y_l)\}_{l=1}^n$. Here, $x$ may or may not belong to $\mathcal{X}$. {
For the sake of clearness, we should distinguish between the point $x\in\iota(\MM)$ and the point $\iota^{-1}(x)\in \MM$. However, to simplify the notation, for the rest of this paper we use the same symbol $x$ to denote $x\in\iota(\MM)$ or $\iota^{-1}(x)\in\MM$ and use $X$ to denote $X\in\iota(\MM)$ or $\iota^{-1}(X)\in\MM$ unless there is any ambiguity in the context.} In addition, throughout this paper we assume that the sample size $n\gg d$ {
and $X$ is not contaminated by error}.  
In the following subsections we discuss the steps in the MALLER algorithm : (1) estimating the intrinsic dimension $d$ of the manifold, (2) determining the true nearest neighbors of $x$ on $\MM$ using the Euclidean distance, (3) estimating the embedded tangent plane by local PCA, and (4) constructing LLR on the embedded tangent plane estimate. 
Before going into the details, the MALLER algorithm is summarized below. 

\noindent {\bf The MALLER Algorithm:}
\begin{enumerate}
\item Calculate the MLE intrinsic dimension estimate $\hat{d}$ in \cite{levina_bickel:2005}, and treat it as $d$.
\item For the given $x$, $\hpca$ and $h$ determine $\mathcal{N}^{\text{true}}_{x,\hpca}$ and $\mathcal{N}^{\text{true}}_{x,h}$, the two sets of estimates of the true nearest neighbors of $x$ on $\MM$ within a Euclidean ball of radius $\sqrt{\hpca}$ and $\sqrt{h}$ respectively, which are defined by (\ref{estimate:neighbors}). 
\item Employ the local PCA based on the points in $\mathcal{N}^{\text{true}}_{x,\hpca}$ to get an orthonormal basis $\{\vu_k(x)\}_{k=1}^d$ for the embedded tangent plane estimate at $x$, thus obtaining $\{\vx_{l}\}_{l=1}^n$, the coordinates of the projections of $\{X_l-x\}_{l=1}^n$ onto the affine space spanned by $\{\vu_k(x)\}_{k=1}^d$ with respect to this basis. See Section \ref{section:pca} for the details.
\item For given kernel $K$ and bandwidth $h$, obtain $\hat{\vbeta}_x$ by the LLR (\ref{functionalmfd}) based on $\left \{\vx_{l}: X_l \in \mathcal{N}^{\text{true}}_{x,h}\right\}$. Then we can compute the regression, embedded gradient and covariant derivative estimators defined in (\ref{algo:estimator:mhat}), (\ref{algo:estimator:CovDeri}) and (\ref{algo:estimator:Dmhat}) respectively.
\end{enumerate}

\subsection{Intrinsic dimension estimation}

Given the manifold assumption, in general the intrinsic dimension $d$ of the manifold $\MM$ is unknown a priori and needs to be estimated based on the sample $\mathcal{X}$. There exist many methods for estimating the intrinsic dimension 
and we have picked the maximum likelihood estimation (MLE)  method introduced in \cite{levina_bickel:2005} to estimate $d$ and denote the estimated dimension by $\hat{d}$. Since $d\ll n$, we assume the estimated dimension $\hat{d}$ is correct and hence will not distinguish between $d$ and $\hat{d}$.

\subsection{Determining the nearest neighbors}

{
Numerically determining the neighbors of $x\in\MM$ using the Euclidean distance is problematic due to the embedding structure of the manifold, that is, the condition number of the embedded manifold \cite{NSW}.} 
The reach of $\MM$ is defined as the largest number $\tau\geq0$ so that for every $0 \leq r<\tau$, the open normal bundle of $\MM$ of radius $r$ is still embedded in $\RR^p$. Since $\MM$ is assumed to be compact, we know $\tau>0$. The quantity $1/\tau$ is referred to as the ``condition number'' of $\MM$ \cite{NSW}. For the  given $x\in\MM$ and any $\delta>0$, denote respectively the set of Euclidean $\sqrt{\delta}$-neighbors of $x$ from $\mathcal{X}$ and the set of geodesic $\sqrt{\delta}$-neighbors of $x$ from $\mathcal{X}$ as
$$
\mathcal{N}^{\RR^p}_{x,\delta} = \big\{X_j\in \mathcal{X}:  \|X_j-x\|_{\mathbb{R}^p} < \sqrt{\delta} \big\}\mbox{ and } \mathcal{N}^\MM_{x,\delta} = \big\{X_j\in\mathcal{X}:  d(X_j,x) < \sqrt{\delta} \big\}, 
$$
where $d(\cdot,\cdot)$ is the geodesic distance. 
When $\delta$ is small enough, it is shown in Lemma \ref{lemma4} in the Supplementary that $\mathcal{N}^{\RR^p}_{x,\delta}$ is roughly the same as $\mathcal{N}^\MM_{x,\delta}$, which is the main fact rendering the whole algorithm feasible. However, when $\sqrt{\delta}$ exceeds $2\tau$, $\mathcal{N}^\MM_{x,\delta}$ might be a strict subset of $\mathcal{N}^{\RR^p}_{x,\delta}$. See Figure \ref{fig:condition}. 
This fact combined with the lack of a priori knowledge of $\MM$, in particular, the geodesic distance and the condition number $1/\tau$, lead to the problem. 
Since the manifold structure is our main concern, we need to learn $\mathcal{N}^\MM_{x,\delta}$. The problem is thus reduced to determining which points in $\mathcal{N}^{\RR^p}_{x,\delta}$ are in $\mathcal{N}^\MM_{x,\delta}$ and which are not.
To cope with this problem, we apply the ``self-tuning spectral clustering'' algorithm \cite{Zelnik-Manor_Perona:2004} to the set $\mathcal{N}^{\RR^p}_{x,\delta}$. We denote
\begin{equation}\label{estimate:neighbors}
\mathcal{N}^{\text{true}}_{x,\delta}:=\big\{X_j\in \mathcal{N}^{\RR^p}_{x,\delta}:  X_j\mbox{ is in the same cluster as }x \big\}.
\end{equation}
Then, according to Lemma \ref{lemma4} in the Supplementary, $\mathcal{N}^{\text{true}}_{x,\delta}$ is an accurate estimate of $\mathcal{N}^{\MM}_{x,\delta}$. 
\begin{figure}[h]	
\begin{center}
\subfigure{
\includegraphics[width=0.7\textwidth]{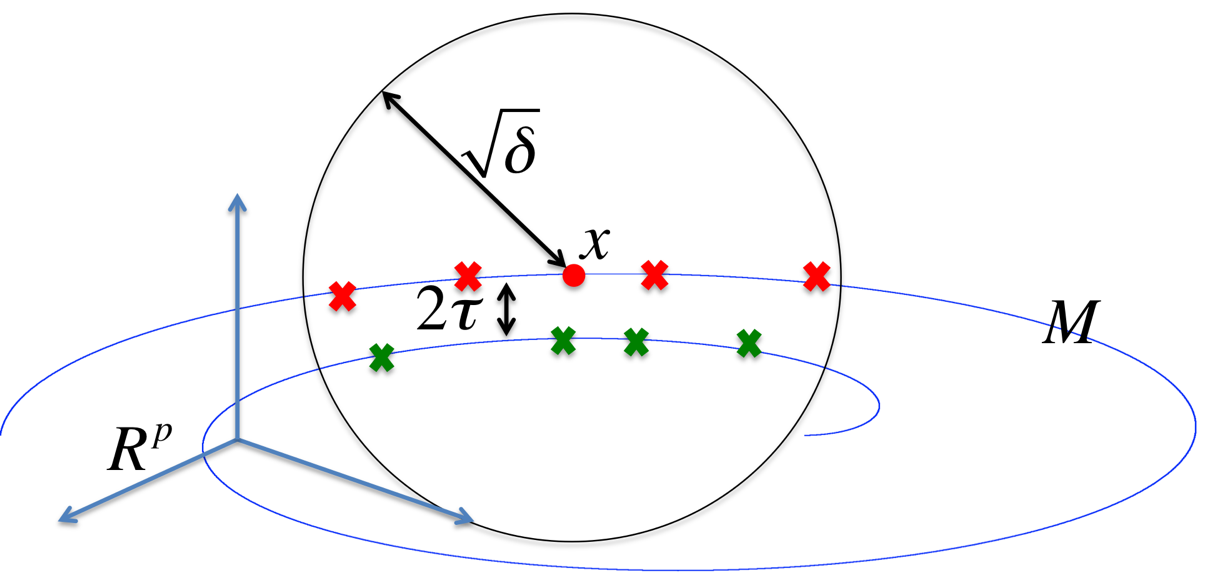}}
\end{center}
\vspace{-0.6cm}
\caption{\small Condition number. A $1$-dim manifold $\MM$ (blue curve) is embedded in $\RR^p$ with the condition number $1/\tau$. For the fixed $x\in\MM$, the black circle is of radius $\sqrt{\delta}$ and is centered at $x$. The Euclidean $\sqrt{\delta}$-neighbors of $x$,  $\mathcal{N}^{\RR^p}_{x,\delta}$, consists of both the red and green crosses. However, the geodesic $\sqrt{\delta}$-neighbors (true neighbors) of $x$, $\mathcal{N}^{\MM}_{x,\delta}$, consists of only the red crosses but not the green crosses. 
}
\label{fig:condition}
\vspace{-0.3cm}
\end{figure}

\subsection{Embedded tangent plane estimation} \label{section:pca}
Write the tangent plane of the manifold at ${x}\in\MM$ as $T_{{x}}\MM$. Denote by $\iota_*$ the total differential of $\iota$ 
and by $\iota_*T_{{x}}\MM$ the embedded tangent plane in $\RR^p$. Note that $\iota_*T_{{x}}\MM$ is a $d$-dimensional affine space inside $\RR^p$ which is tangential to $\MM$ at $x$. 
Next, we find an orthonormal basis of an approximation to the embedded tangent plane $\iota_*T_{{x}}\MM$. 
Fix $\hpca> 0$. Assume that there are $N_x$ points in $\mathcal{N}^{\text{true}}_{x,\hpca}$ and rewrite them as
$\mathcal{N}^{\text{true}}_{x,\hpca} = \{X_{x_1},\ldots,X_{x_{N_x}} \}.$
Let 
$$
\Sigma_x=\frac{1}{n}\sum_{l=1}^{N_x}\big(X_{x_l}-\mu_x\big)\big(X_{x_l}-\mu_x\big)^T
$$ 
be the sample covariance matrix of $\mathcal{N}^{\text{true}}_{x,\hpca}$, where $\mu_x$ is the sample mean of $\mathcal{N}^{\text{true}}_{x,\hpca}$.
Denote by $\{\vu_k(x)\}_{k=1}^d$ the eigenvectors corresponding to the $d$ largest eigenvalues of $\Sigma_x$, where $\vu_k(x)$ is a $p\times 1$ unit length column vector and $d$ is the dimension of the manifold $\MM$,  
and define a $p\times d$ matrix
\begin{equation}\label{algorithm:BX}
B_x:=\big[
\begin{array}{ccc}
\vu_{1}(x) & \ldots & \vu_{d}(x).
\end{array}
\big]
\end{equation}
Let $\vx_{l}=(\vx_{l,1},~\ldots,~\vx_{l,d})^T:=B^T_x(X_{l}-x)$, for $l=1,\ldots,n$. 

\subsection{Local linear regression on the tangent plane }
Choose a kernel function $K:[0,\infty]\to \RR$ so that $K|_{[0,1]}\in C^1([0,1])$ and $K|_{(1,\infty]}=0$ and a 
bandwidth $h>0$. Notice that $h$ is different from $\hpca$.
We solve the regression problem (\ref{model1}) at ${x}$ via considering the following local linear least squares fitting on the estimated tangent plane:
\begin{equation}\label{functionalmfd}
\hat{\vbeta}_x=\argmin_{\vbeta\in\RR^{d+1}} \sum_{l=1}^n\Big(Y_{l}-\beta_0-\sum_{k=1}^d\beta_k\vx_{l,k}\Big)^2\mbox{I}_{\mathcal{N}^{\text{true}}_{x,h}}(X_l)K_h(X_{l},x),
\end{equation}
where $\vbeta=(\beta_0,\beta_1,\ldots,\beta_d)^T$, $K_h(X_{l},x):=h^{-d/2}K\big(\|X_{l}-x\|_{\RR^p}\big/\sqrt{h}\big)$, and $\mbox{I}$ is the indicator function.
 Denote
\begin{equation}\label{def:Yandm}
\vY=\left(Y_{1},\ldots,Y_{n}\right)^T\quad\mbox{and}\quad\vm=\left(m(\iota^{-1}(X_{1})),\ldots,m(\iota^{-1}(X_{n}))\right)^T.
\end{equation}
Denote by $\XX_x$ the $n\times (d+1)$ design matrix related to $x$:
\begin{equation}\label{design}
\XX_x=\bigg[
\begin{array}{ccc}
1 & \dots & 1 \\
\vx_{1} & \dots & \vx_{n} \\
\end{array}
\bigg]^T,
\end{equation}
and $\WW_{x}$ the kernel weight matrix:
\begin{equation}\label{weighted}
\WW_{x}=\diag\left(K_h(X_{1},x)\mbox{I}_{\mathcal{N}^{\text{true}}_{x,h}}(X_1),\ldots,K_h(X_{n},x)\mbox{I}_{\mathcal{N}^{\text{true}}_{x,h}}(X_n)\right),
\end{equation}
which is a diagonal matrix of size $n \times n$. Then (\ref{functionalmfd}) can be written as
\begin{equation}\label{functional2}
\hat{\vbeta}_x=\argmin_{\vbeta\in\RR^{d+1}} (\vY-\XX_x\beta)^T\WW_{x}(\vY-\XX_x\beta).
\end{equation}
It is straightforward to show that the minimizer in (\ref{functional2}) is
$$
\hat{\vbeta}_x=(\XX_x^T\WW_{x}\XX_x)^{-1}\XX^T_x\WW_{x}\vY
$$ 
if $(\XX^T_x\WW_{x}\XX_x)^{-1}$ exists. The invertibility of $\XX^T_x\WW_{x}\XX_x$ will be shown in the Supplementary.
Our estimator of $m({x})$ MALLER is given by 
\begin{equation}\label{algo:estimator:mhat}
\hat{m}({x},h):=\vv_1^T\hat{\vbeta}_x=\vv_1^T(\XX_x^T\WW_{x}\XX_x)^{-1}\XX^T_x\WW_{x}\vY,
\end{equation}
where $\vv_k\in\RR^{d+1}$ is a $(d+1)\times 1$ unit vector with the $k$-th entry being 1. If the interest is to estimate the embedded gradient of $m$ at ${x}$, the following estimator is considered:{
\begin{equation}\label{algo:estimator:CovDeri}
\widehat{\iota_*\mbox{\tt{grad}} m({x})}:=\sum_{i=1}^d\widehat{\nabla_{\partial_i(x)}m}({x},h)U_i(x).
\end{equation}
where $\mbox{\tt{grad}}$ denotes the gradient, }
\begin{equation}\label{algo:estimator:Dmhat}
\widehat{\nabla_{\partial_i({x})}m}({x},h):=\vv_{i+1}^T\hat{\vbeta}_x,
\end{equation}
and $\{\partial_i({x})\}_{i=1}^d$ is the orthonormal basis of $T_{{x}}\MM$ closest to the estimated orthonormal basis $\{\vu_k(x)\}_{k=1}^d$ in the sense described in Lemma \ref{lemma6} in the Supplementary. {
We mention that the gradient on the manifold is closely related to the covariant derivative and the exterior derivative. The relationship between these quantities is summarized in the Supplementary.}

From (\ref{design}) and (\ref{functional2}) we can see that the key ingredient in the estimators (\ref{algo:estimator:mhat}), (\ref{algo:estimator:CovDeri}) and (\ref{algo:estimator:Dmhat}) is finding the coordinate of a given point related to a chosen basis and approximate locally the regression function by a linear function of that coordinate. 
A consequence of this fact is dimension reduction. Indeed, since $d$ may be much smaller than $p$, having obtained $\{\vx_{l}\}_{l=1}^n$, locally at $x$ we convert the $p$-dimensional regression problem to a $d$-dimensional one, by paying the price of additional sampling error coming from the tangent plane approximation and the curvature of the manifold.  Nonetheless, it is shown in Section \ref{theory} and Section \ref{numerics} 
that the effect of this extra sampling error on the MALLER is negligible and does not contribute to the leading term in the estimation error, provided that 
$\hpca$ is smaller than $h$.

\section{Bandwidth Selection}\label{bandwidth}

Selection of the local PCA bandwidth $\hpca$ is a less important problem than choosing the bandwidth $h$ in the regression step, as it is discussed in  Section \ref{theory} that $\hpca$ should be smaller than $h$ and of a smaller order than the optimal order of $h$.   We refer to \cite{vdm} for selection of $\hpca$. 
Suppose that for a given choice of $\hpca$, the tangent plane estimate has been obtained. The aim is finding the optimal value of $h$ so as to minimize the asymptotic conditional MSE  of the MALLER, which is provided in (\ref{thm:interior:cond_mse}). 
When the random errors are homoscedastic, the modified generalized cross-validation (mGCV) suggested in \cite{bickel_li:2007} can be used.
Specifically, let $\mathcal{H}_{\text{mGCV}}=\{\lambda_1,\ldots, \lambda_B\}$ be a set of candidate bandwidths, where $\lambda_i>0$, $i=1,\ldots,B$, and $B\in\NN$, 
and for each point $x$ we choose a block of data points $\{(X_{j}, Y_{j})\}_{j\in \mathcal{J}}$. For each $h\in \mathcal{H}_{\text{mGCV}}$, define the mGCV of $h$ by
$$
\text{mGCV}(h)=\Big(1+2\text{atr}_{\mathcal{J}}(h)\Big)\frac{1}{n_1}\sum_{j\in\mathcal{J}} \Big(Y_j-\hat{m}(X_j,h)\Big)^2,
$$
where 
$
\text{atr}_{\mathcal{J}}(h):=\frac{1}{n_1}\sum_{j\in\mathcal{J}}\vv_1^T(\XX^T_{X_j}\WW_{X_j}\XX_{X_j})^{-1}\vv_1h^{-d/2}K(0),
$ 
$n_1$ is the number of points in $\mathcal{J}$, 
and $\hat{m}(X_j,h)$ is the MALLER (\ref{algo:estimator:mhat}) of $m(X_j)$ based on bandwidth $h$.
Then $h_{\text{mGCV},\hat{m}}$ is chosen as the value of $h$ in $\mathcal{H}_{\text{mGCV}}$ which minimizes $\text{mGCV}(h)$.

In the presence of heteroscedastic random errors, we adopt the following additional step to deal with the bandwidth selection problem. Note that the optimal bandwidth has to balance between the conditional bias and the conditional variance, which depends on $\sigma^2(x)$.
Thus, with the pilot mGCV bandwidth $h_{\text{mGCV},\hat{m}}$ we get the first estimate of $m(X_l)$ by the MALLER, denoted as $\hat{m}(X_l,h_{\text{mGCV},\hat{m}})$, $l=1,\ldots,n$, and we apply the method suggested in \cite{chen_cheng_peng:2009} to estimate $\sigma^2(x)$. We choose this method since the random error $\epsilon$ might have a heavy tailed distribution. Defining the residuals as 
$$
\hat{r}_l:=\Big(Y_l-\hat{m}(X_l,h_{\text{mGCV},\hat{m}})\Big)^2,\, l=1,\ldots,n,
$$
we evaluate the following minimization problem
$$
(\hat{\alpha}_0(x),\hat{\valpha}(x))=\argmin_{\alpha_0\in\RR,\valpha\in\RR^d} \sum_{X_l\in\mathcal{N}^{\text{true}}_{x,h_{\text{mGCV},\hat{r}}}}\hspace{-20pt}\big( \log(\hat{r}_l+1/n)-\alpha_0-\valpha^TB_x^T(X_l-x)\big)^2K_{h_{\text{mGCV},\hat{r}}}(X_l,x),
$$
where $h_{\text{mGCV},\hat{r}}$ is the bandwidth determined by minimizing the mGCV upon the data set $\{(X_l, \log(\hat{r}_l+1/n))\}_{l=1}^n$. The estimated value of $\sigma^2(x)$ is then defined as
$$
\hat{\sigma}^2({x}):=e^{\hat{\alpha}_{0}(x)}\bigg[\frac{1}{n}\sum^n_{l=1}\hat{r}_le^{-\hat{\alpha}_{0}(x)}\bigg]^{-1}.
$$
Finally we select the bandwidth for MALLER given in (\ref{algo:estimator:mhat}) at ${x}\in\MM$. Denote the optimal bandwidth at ${x}$ as $h_{\text{opt}}(x)$. Fix a candidate bandwidths set $\mathcal{H}_{\text{opt}}=\{\lambda_1,\ldots, \lambda_B\}$, which  may be different from $\mathcal{H}_{\text{mGCV}}$, where $B\in\NN$ and $\lambda_i>0$, $i=1,\ldots,B$. For each $h\in\mathcal{H}_{\text{opt}}$, estimate the conditional bias  and the conditional variance of $\hat{m}({x},h)$ respectively by 
$$
\hat{b}({x},h) = 2[\hat{m}({x},h)-\hat{m}({x},h/2)],
$$
which is based on the asymptotic bias expression given in (\ref{rslt:interior:mfd}) of the Supplementary
and (\ref{col:boundary:bias}), and
$$
\hat{v}({x},h) = \vv_1^T(\XX_x^T\WW_{x}\XX_x)^{-1}\XX_x^T\WW_{x}\hat{\SSS}_x\WW_{x}\XX_x(\XX_x^T\WW_{x}\XX_x)^{-1}\vv_1,
$$
which is based on the finite sample variance expression given in (\ref{mfd:interior:var}) of the Supplementary, where $\hat{\SSS}_x$ is a $n\hspace{-2pt}\times\hspace{-2pt} n$ diagonal matrix 
$\hat{\SSS}_x=\diag\{\hat{\sigma}^2(X_1),\ldots, \hat{\sigma}^2(X_n)\}$.
The conditional MSE of $\hat{m}({x},h)$ is then estimated by 
$$
\widehat{\text{MSE}}({x},h):=\hat{b}({x},h)^2+\hat{v}({x},h).
$$
The value of $h\in\mathcal{H}_{\text{opt}}$, denoted as $\hat{h}_{\text{opt}}(x)$, which minimizes $\widehat{\text{MSE}}({x},h)$ is then used to approximate $h_{\text{opt}}(x)$. 
With $\hat{h}_{\text{opt}}(x)$, we can evaluate $\hat{m}({x}, \hat{h}_{\text{opt}}(x))$.
We do not claim the optimality of the bandwidth selection in this algorithm. For example, when the point ${x}$ is near the boundary of the manifold, the bandwidth should be chosen differently. We choose this bandwidth selection scheme since it is commonly used and is easy to implement \cite{ruppert:1997, fan_gijbels:1996}. Further study on the bandwidth selection problem in the manifold setup is an important and open problem and is out of the scope of this paper. 



\section{Theory}\label{theory}

Before stating the main theorems describing the behaviors of the proposed MALLER given in Section \ref{algorithm}, we set up more notation. 
Recall the assumption in Section 2 that $\MM$ is a $d$-dimensional compact smooth Riemannian manifold embedded in $\RR^p$ via $\iota$.
Let the metric $g$ on $\MM$ be the one induced from the canonical metric of the ambient space $\RR^p$. 
The exponential map at ${x}\in\MM$ is denoted as $\exp_{{x}}$. Denote by $d({x},{y})$ the distance between ${x},{y}\in\MM$. 
The volume form on $\MM$ induced from $g$ is denoted as $\ud V$.
Given $\delta\geq 0$, denote the set of points close to the boundary $\partial\MM$ with distance less than $\delta$ as
\begin{equation}\label{conditions:statement:Csqrthx}
\MM_{\delta}=\big\{{x}\in\MM:~\min_{{y}\in\partial\MM}d({x},{y})\leq \delta\big\}.
\end{equation}
When $\delta>0$ is small enough, we denote the geodesic ball with radius $\delta$ and center ${x}\in\MM$ as $B^\MM_{\delta}({x})$. Denote $B^{\RR^q}_{\delta}(x)$ as the ball in $\RR^q$, $q\in\NN$, with radius $\delta$ and center $x\in\RR^q$ and $S^{q-1}$ as the standard $q-1$ sphere embedded in $\RR^q$ with the induced metric. Define 
\begin{equation}\label{approximate_geodesic_ball}
{
\tilde{B}^{\MM}_\delta({x}):=\iota^{-1}\left(B^{\RR^p}_\delta(x)\cap \iota(\MM)\right)\subset \MM,
}
\end{equation} 
which is an approximate of the geodesic ball $B^\MM_\delta({x})$. Denote by $\nabla$ the Levi-Civita connection, $\Delta$ the Laplace-Beltrami operator and $\Hess$ the Hessian operator of $(\MM,g)$. Denote by $\Ric$ the Ricci curvature of $(\MM,g)$. The second fundamental form of the embedding $\iota$ at ${x}$ is denoted by $\II_{{x}}$. 

\subsection{Assumptions}
Let the random vector $X:\Omega \rightarrow \RR^p$ be a measurable function with respect to the probability space $(\Omega,\mathcal{F},P)$. {
To make the definition clear, in this paragraph we make clear the role of $\iota$ to distinguish between $x\in\MM$ and $\iota(x)\in\iota(\MM)$. }Suppose the range of $X$ is supported on $\iota(\MM)$. In this case, the p.d.f. of $X$ is not well-defined  as a function on $\RR^p$ if the intrinsic dimension $d$ of $\MM$ is less than $p$. To define properly the p.d.f. of $X$, let $\tilde{\mathcal{B}}$ be the Borel sigma algebra of $\iota(\MM)$, and denote by $\tilde{P}_X$ the probability measure of $X$, defined on $ \tilde{\mathcal{B}}$, induced from $P$. Assume that $\tilde{P}_{X}$ is absolutely continuous with respect to the volume measure on $\iota(\MM)$, that is, $\ud \tilde{P}_{X}(x)=f(\iota^{-1}({x}))\iota_*\ud V(x)$, where $f\in C^2(\MM)$. Thus, for an integrable function $\zeta:\iota(\MM)\rightarrow \RR$, we have
\begin{eqnarray}
\hspace{-8pt}&&\hspace{-8pt}\EE \zeta(X)=\int_{\Omega}\zeta(X(\omega))\ud P(\omega)=\int_{\iota(\MM)} \zeta(x)\ud \tilde{P}_{X}(x)\nonumber\\
&=&\int_{\MM} \zeta(x) f(\iota^{-1}({x})) \iota_*\ud V(x)
=\int_{\MM} \zeta(\iota(y)) f(y) \ud V(y),\label{definition:expectation:manifold}
\end{eqnarray}
where the second equality follows from the fact that $\tilde{P}_X$ is the induced probability measure, and the last one comes from the change of variable $x=\iota(y)$.
In this sense we interpret $f$ as {\it the p.d.f. of $X$ on $\MM$}. 

 
The kernel function $K:[0,\infty]\rightarrow \RR$ used in the proposed MALLER is assumed to be compactly supported in $[0,1]$ so that $K|_{[0,1]}\in C^1([0,1])$. Denote 
$$
\mu_{i,j}:=\int_{B^{\RR^d}_1(0)} K^i(\|u\|_{\RR^d})\|u\|_{\RR^d}^j\ud u
$$ 
and we normalize $K$ so that $\mu_{1,0}=1$.
Note that we can also consider more general kernel functions. For example, any $C^1(\RR)$ function with proper decaying property can be chosen. More general bandwidth like a positive definite symmetric bandwidth matrix $H$ considered in \cite{ruppert_wand:1994} can also be considered. Since the analysis under these more general conditions is the same except for the wrinkle caused by the extra error terms, we focus on the above setup to make the analysis clear. 

We make the following assumptions in the analysis.
\begin{itemize}
\item[(A1)] $h\to 0$ and  $nh^{d/2}\to \infty$ as $n\rightarrow \infty$.
\item[(A2)] $f$ belongs to $C^2(\MM)$ and satisfies 
\begin{equation}\label{conditions:statement:fcond}
0<\inf_{{x}\in\MM} f({x})\leq \sup_{{x}\in\MM}f({x})<\infty.
\end{equation}
\item[(A3)] For every given $h>0$ and every point ${x}\in\MM_{\sqrt{h}}$, the set 
$B^\MM_{\sqrt{h}}({x})\cap \MM$ 
contains a non-empty interior set. 
The purpose of this assumption is to avoid the potential degeneracy near the boundary. 
\item[(A4)] 
Assume that $\hpca^{1/2}< \min(2\tau,\text{inj}(\MM))$ and $h^{1/2}< \min(2\tau,\text{inj}(\MM))$, where $\text{inj}(\MM)$ is the injectivity radius of $\MM$ and $1/\tau$ is the condition number of $\MM$ \cite{NSW}. Please see step 2 of the algorithm for precise definition of $\tau$.
\end{itemize}

\subsection{Main Theory}

We state our main theorems here and postpone the proofs to the Supplementary. 

\begin{thm}\label{thm:interior}
Suppose $\hpca\asymp n^{-2/(d+1)}$ and $h\geq\hpca$. When ${x}\in\MM\backslash\MM_{\sqrt{h}}$, the conditional mean square error (MSE) of the estimator $\hat{m}({x},h)$ is
\begin{equation}\label{thm:interior:cond_mse}
\begin{split}
&\text{MSE}\{\hat{m}({x},h)| \mathcal{X}\}
=h^2\frac{\mu_{1,2}^2}{4d^2}(\Delta m({x}))^2+\frac{1}{nh^{d/2}}\frac{\mu_{2,0}\sigma^2({x})}{f({x})}\\
&\qquad\qquad+O(h^3+h^2\hpca^{3/4})+O_p\Big(\frac{1}{n^{1/2}h^{d/4-2}}+\frac{1}{nh^{d/2-1}}+\frac{1}{n^{3/2}h^{3d/4}}\Big).
\end{split}
\end{equation}
\end{thm}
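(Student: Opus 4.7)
The plan is to follow the Ruppert--Wand analysis of local linear regression, but in Riemannian normal coordinates on $\MM$ and with two extra error sources accounted for: the curvature of $\iota$ and the PCA approximation of the tangent plane. Writing $\hat m(x,h)=\vv_1^T(\XX_x^T\WW_x\XX_x)^{-1}\XX_x^T\WW_x\vY$ and splitting $\vY=\vm+\sigma(\cdot)\,\epsilon$, the conditional (on $\mathcal{X}$) bias is $\vv_1^T(\XX_x^T\WW_x\XX_x)^{-1}\XX_x^T\WW_x\vm-m(x)$ and the conditional variance is the standard sandwich with $\SSS_x=\diag\{\sigma^2(X_1),\ldots,\sigma^2(X_n)\}$. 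So the whole question reduces to asymptotics for the weighted moments $M_{n,a,b}:=n^{-1}\sum_l K_h(X_l,x)\mbox{I}_{\mathcal{N}^{\text{true}}_{x,h}}(X_l)\vx_{l,a}\vx_{l,b}$ (with $\vx_{l,0}\equiv 1$) and their $m$-weighted and $\sigma^2$-weighted analogues.

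To compute the expectations of these moments, pass from $\RR^p$ to $T_x\MM$. Under (A4), Lemma \ref{lemma4} replaces $\mbox{I}_{\mathcal{N}^{\text{true}}_{x,h}}$ by $\mbox{I}\{d(X_l,x)<\sqrt{h}\}$ off a negligible event, and (\ref{definition:expectation:manifold}) with $X=\iota(\exp_x u)$ turns each expected moment into an integral over a ball in $T_x\MM\cong\RR^d$. The standard normal-coordinate expansions
\begin{align*}
\|\iota(\exp_x u)-x\|_{\RR^p}^2 &= \|u\|^2 - \tfrac{1}{12}\|\II_x(u,u)\|^2 + O(\|u\|^5),\\
\ud V(\exp_x u) &= \bigl(1-\tfrac{1}{6}\Ric_x(u,u)+O(\|u\|^3)\bigr)\ud u,\\
m(\exp_x u) &= m(x)+\langle\nabla m(x),u\rangle+\tfrac{1}{2}\Hess m(x)(u,u)+O(\|u\|^3),
\end{align*}
and the analogous expansion of $f$, together with the symmetry of $K$ on $B^{\RR^d}_1(0)$ (which kills the odd $u$-moments), let me extract the leading kernel constants $\mu_{1,0},\mu_{1,2},\mu_{2,0}$. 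The Taylor tails and the quartic embedding correction jointly contribute $O(h^3)$ to the bias. Fluctuations $M_{n,a,b}-\EE M_{n,a,b}$ are then controlled by a Bernstein-type inequality applied to the $h^{-d/2}$-bounded summands; propagating these deviations through a Neumann expansion of $(\XX_x^T\WW_x\XX_x)^{-1}$ about its deterministic limit produces the three stochastic remainders in (\ref{thm:interior:cond_mse}).

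The delicate part is the PCA-induced error in the design matrix. By Lemma \ref{lemma6}, after absorbing an orthogonal rotation inside $\iota_*T_x\MM$ (harmless, since LLR is invariant under rotations of the design), $B_x$ differs from an orthonormal basis of $\iota_*T_x\MM$ by a perturbation into the normal bundle of operator norm $O_p(\hpca^{3/4})$, a rate that is balanced exactly by $\hpca\asymp n^{-2/(d+1)}$. Substituting $X_l-x=\iota_* u_l+\tfrac12\II_x(u_l,u_l)+O(\|u_l\|^3)$, one finds that $\vx_l$ equals $u_l$ plus a pairing of the PCA normal component with the curvature normal component of size $O(\hpca^{3/4}\|u_l\|^2)$. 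This perturbation enters the bias only at combined order $O(h\hpca^{3/4})$; crossed with the leading $O(h)$ LLR bias, it becomes the $O(h^2\hpca^{3/4})$ contribution to the MSE, and it leaves the leading variance unchanged.

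Putting it together, the first-order Taylor term in $u$ is killed by the design-adaptive property of LLR (so the $\nabla f(x)$ contribution cancels, as in \cite{ruppert_wand:1994}); the second-order term contributes $\tfrac{h\mu_{1,2}}{2d}\Delta m(x)$ because the Hessian trace in an orthonormal frame is the Laplace--Beltrami operator; and the sandwich formula contributes $(nh^{d/2})^{-1}\mu_{2,0}\sigma^2(x)/f(x)$. Squaring the bias and adding the variance gives (\ref{thm:interior:cond_mse}). The main obstacle I foresee is precisely the PCA bookkeeping: one must show that the normal-bundle perturbation of size $\hpca^{3/4}$ enters the bias only multiplied by the curvature form $\II_x$ (hence with an additional factor of $h$), never linearly; this is exactly what makes the PCA step ``cheaper'' than the regression step and what forces $h\ge\hpca$ so that the PCA-induced bias does not dominate the intrinsic $O(h)$ bias.
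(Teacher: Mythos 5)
Your proposal follows essentially the same route as the paper's proof: expanding the weighted design moments via normal-coordinate expansions of $m$, $f$, $\ud V$, and the chord--arc relation; using the design-adaptive cancellation of the $\nabla f$ term so that the trace of the Hessian yields $\Delta m$; and, crucially, tracking the local-PCA basis error from Lemma~\ref{lemma6} and observing that the dominant $O_p(\hpca^{3/4})$ normal-bundle perturbation pairs only with $\II_x$, producing a bias of order $O_p(h\hpca^{3/4})$ that is subordinate to the intrinsic $O(h)$ bias precisely because $\hpca\le h$. One small slip: the Taylor-tail and embedding corrections contribute $O(h^2)$ to the conditional \emph{bias} (not $O(h^3)$); the $O(h^3)$ term in (\ref{thm:interior:cond_mse}) arises from the cross product when squaring $h\tfrac{\mu_{1,2}}{2d}\Delta m(x)+O(h^2)$, exactly as recorded in (\ref{rslt:interior:mfd}).
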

Next, we consider the case when ${x}$ is close to the boundary. To ease the notation, for ${x}\in\MM_{\sqrt{h}}$ and $h>0$, define a $(d+1)\times(d+1)$ matrix $\nu_{i,x}$:
\begin{equation}
\nu_{i,x}:=\,\left[\begin{array}{cc}
\nu_{i,x,11} & \nu_{i,x,12}\\
\nu^T_{i,x,12} & \nu_{i,x,22}
\end{array}
\right]
:=\,\left[\begin{array}{cc}
\int_{\frac{1}{\sqrt{h}}\mathfrak{D}({x})}K^i(\|u\|)\ud u & \int_{\frac{1}{\sqrt{h}}\mathfrak{D}({x})}K^i(\|u\|)u^T\ud u\\
\int_{\frac{1}{\sqrt{h}}\mathfrak{D}({x})}K^i(\|u\|)u\ud u & \int_{\frac{1}{\sqrt{h}}\mathfrak{D}({x})}K^i(\|u\|)uu^T\ud u
\end{array}
\right],\label{thm1:statement:nuixdef}
\end{equation}
where for $i=1,2$, $\nu_{i,x,11}\in\RR$, $\nu_{i,x,12}$ is a $1\times d$ matrix, $\nu_{i,x,22}$ is a $d\times d$ matrix and
\begin{eqnarray}
\mathfrak{D}({x}):= \exp_{{x}}^{-1}(B^\MM_{\sqrt{h}}({x})\cap \MM)\subset T_{{x}}\MM.\label{thm1:statement:frakDdef}
\end{eqnarray}
We also define
\begin{equation}\label{thm2:C}
C:=\bigg[\begin{array}{cc}
1 & 0\\
0 & h^{\frac{1}{2}}I_{d}
\end{array}\bigg].
\end{equation} 
Here, $I_{k}$ denotes the $k\times k$ identity matrix for any $k\in\NN$.

\begin{thm}\label{thm:boundary}
Suppose ${x}\in\MM_{\sqrt{h}}$, $\hpca\asymp n^{-2/(d+1)}$ and $h\geq \hpca$.
The conditional MSE of the estimator $\hat{m}({x},h)$ is
\begin{eqnarray}
&&\hspace{-24pt}\text{MSE}\{\hat{m}({x},h)| \mathcal{X}\}= \frac{h^2}{4}\frac{[\tr \big(\Hess m({x})\nu_{1,x,22}\big)]^2}{\nu^2_{1,x,11}}+\frac{\vv^T_1\nu_{1,x}^{-1}\nu_{2,x}\nu_{1,x}^{-1}\vv_1}{nh^{\frac{d}{2}}} \frac{\sigma^2({x})}{f({x})}\label{thm:bdry:cond_mse}\\
&&\hspace{-32pt}+O_p\Big(\hpca^{3/4}h^{3/2}+\hpca^{1/2}h^2\Big)+O_p\Big(\frac{1}{n^{1/2}h^{d/4-2}}+\frac{1}{nh^{d/2-1/2}}+\frac{1}{n^{3/2}h^{3d/4}}\Big)\nonumber
\end{eqnarray}
\end{thm}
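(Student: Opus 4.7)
The plan is to parallel the proof of Theorem \ref{thm:interior} but to track carefully the asymmetry of the integration domain $\mathfrak{D}(x)$ forced by boundary truncation. Starting from
$$
\hat{\vbeta}_x = (\XX_x^T\WW_x\XX_x)^{-1}\XX_x^T\WW_x\vY,
$$
I would treat the conditional bias and conditional variance of $\hat{m}(x,h)=\vv_1^T\hat{\vbeta}_x$ separately. For the bias, I Taylor expand $m$ on $\MM$ via the exponential map: writing $\xi_l := \exp_x^{-1}(X_l) \in T_x\MM$ for each $X_l \in \mathcal{N}^{\text{true}}_{x,h}$,
$$
m(X_l) = m(x) + \langle \mathrm{grad}\, m(x), \xi_l \rangle + \tfrac12 \Hess m(x)(\xi_l,\xi_l) + O(\|\xi_l\|^3),
$$
and the PCA coordinates $\vx_l$ are small perturbations of the coordinates of $\xi_l$ in the nearest true tangent basis $\{\partial_i(x)\}$, the perturbation rate being governed by Lemma \ref{lemma6}, the second fundamental form $\II_x$ and the reach $\tau$.

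Next, I would approximate $\tfrac{1}{n}\XX_x^T\WW_x\XX_x$, $\tfrac{1}{n}\XX_x^T\WW_x\vm$, and $\tfrac{1}{n}\XX_x^T\WW_x\SSS_x\WW_x\XX_x$ with $\SSS_x=\diag(\sigma^2(X_1),\ldots,\sigma^2(X_n))$, by their conditional expectations via LLN/concentration. After the change of variable $\xi=\sqrt{h}\,u$ and use of (\ref{approximate_geodesic_ball}), each expectation reduces to a kernel weighted moment on $\tfrac{1}{\sqrt{h}}\mathfrak{D}(x)$: the Gram matrix approaches $f(x)\,C\nu_{1,x}C$ and the variance piece $f(x)\,h^{-d/2}C\nu_{2,x}C$, where $C$ is the rescaling (\ref{thm2:C}) that absorbs the $\sqrt{h}$ magnitude of $\vx_l$. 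Assumption (A3) together with compactness gives a uniform lower bound on the smallest eigenvalue of $\nu_{1,x}$, so $\nu_{1,x}^{-1}$ exists. Concentration fluctuations around these expectations produce precisely the $O_p(n^{-1/2}h^{-d/4+2}+n^{-1}h^{-d/2+1/2}+n^{-3/2}h^{-3d/4})$ remainder in (\ref{thm:bdry:cond_mse}).

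The main obstacle is the block inversion of $\nu_{1,x}$, which, unlike in the interior, is not block diagonal since $\nu_{1,x,12}\neq 0$. Writing
$$
\vv_1^T\nu_{1,x}^{-1} = \bigl(\,s_x^{-1},\; -s_x^{-1}\nu_{1,x,12}\nu_{1,x,22}^{-1}\,\bigr), \qquad s_x := \nu_{1,x,11}-\nu_{1,x,12}\nu_{1,x,22}^{-1}\nu_{1,x,12}^T,
$$
conjugating by $C$, and propagating the Taylor expansion through, the linear term is annihilated by the local linear fit while the quadratic term, after cancellations between the two blocks of $\vv_1^T\nu_{1,x}^{-1}$, collapses to the leading bias $\tfrac{h}{2}\tr(\Hess m(x)\nu_{1,x,22})/\nu_{1,x,11}$; squaring yields the first summand of (\ref{thm:bdry:cond_mse}). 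The variance calculation is analogous and produces the factor $\vv_1^T\nu_{1,x}^{-1}\nu_{2,x}\nu_{1,x}^{-1}\vv_1$.

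Finally, the remaining error orders follow by bookkeeping. The Taylor cubic remainder integrated against the truncated kernel contributes $O(h^{3/2})$, while the local PCA perturbation of the tangent basis enters $\vx_l$ to produce $O_p(\hpca^{3/4}h^{3/2}+\hpca^{1/2}h^2)$ after being passed through the matrix inverse and projected by $\vv_1$. The prescription $\hpca\asymp n^{-2/(d+1)}$ with $h\geq\hpca$ is precisely what keeps all these lower-order terms strictly subdominant to the leading bias and variance.
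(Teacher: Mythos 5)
Your proposal follows essentially the same route as the paper's proof: Taylor expansion of $m$ via the exponential map, perturbation control of the PCA basis via Lemma~\ref{lemma6}, concentration of $\tfrac1n\XX_x^T\WW_x\XX_x$, $\tfrac1n\XX_x^T\WW_x\QQQ_m$, and $\tfrac1n\XX_x^T\WW_x\SSS_x\WW_x\XX_x$ to kernel moments on $\tfrac{1}{\sqrt h}\mathfrak{D}(x)$ packaged as $C\nu_{1,x}C$ and $h^{-d/2}\sigma^2(x)f(x)C\nu_{2,x}C$ (you dropped $\sigma^2(x)$ in your description of the variance piece, but the final answer shows you intended it), the binomial/block inverse via $C^{-1}\nu_{1,x}^{-1}C^{-1}/f(x)$, and bookkeeping of the remainders; this is precisely what the paper does, and the $O_p$ rates you list match.

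One caveat worth being explicit about: your assertion that ``after cancellations between the two blocks of $\vv_1^T\nu_{1,x}^{-1}$, collapses to the leading bias $\tfrac{h}{2}\tr(\Hess m(x)\nu_{1,x,22})/\nu_{1,x,11}$'' is not justified, and the cancellation does not happen. Near the boundary $\nu_{1,x,12}\neq 0$, and one computes (with $s_x=\nu_{1,x,11}-\nu_{1,x,12}\nu_{1,x,22}^{-1}\nu_{1,x,12}^T$)
\[
\vv_1^T\nu_{1,x}^{-1}\int_{\frac{1}{\sqrt h}\mathfrak{D}(x)}K(\|u\|)\,u^T\Hess m(x)u\begin{bmatrix}1\\ u\end{bmatrix}\!\ud u
= s_x^{-1}\tr\!\big(\Hess m(x)\nu_{1,x,22}\big)\;+\;\nu_{1,x}^{12}\!\int K(\|u\|)\,u^T\Hess m(x)u\,u\,\ud u,
\]
and the second, cross-block term is $O(1)$ and does not vanish in general because the third kernel moments on the truncated domain are nonzero. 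The paper's own intermediate expression (the first equality of~(\ref{thm:bdry:cond_bias:proof})) retains this cross term; the passage to the theorem's leading constant, whether written with $\nu_{1,x,11}$ or with the Schur complement $s_x$, is not an identity but at best an approximation, and you should keep the full $h\,\vv_1^T\nu_{1,x}^{-1}\int K\,u^T\Hess m(x)u\,[1\;u^T]^T\ud u/2$ form until you are prepared to argue the cross term is subdominant. Apart from this --- which is a defect in the presentation that you inherited rather than introduced --- the structure of the argument is the one in the paper.
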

Notice that in both Theorem \ref{thm:interior} and \ref{thm:boundary}, the minimum of the conditional MSE is achieved when $h\asymp n^{-2/(d+4)}$, which is strictly larger than $\hpca$.

\begin{col}\label{col:smoothbdry}
Suppose $\partial\MM$ is smooth, ${x}\in\MM_{\sqrt{h}}$, $\hpca\asymp n^{-2/(d+1)}$ and $h\geq \hpca$. Then the conditional bias of $\hat{m}({x},h)$ is asymptotically a linear combination of the second order covariant derivative of $m$:
\begin{equation}\label{col:boundary:bias}
\EE\{\hat{m}({x},h)-m({x})| \mathcal{X}\}=\frac{h}{2}\sum_{k=1}^dc_k({x})\nabla^2_{\partial_k,\partial_k}m({x})+O_p(h^{\frac{1}{2}}\hpca^{3/4}+h\hpca^{1/2})+O_p\Big(\frac{1}{n^{\frac{1}{2}}h^{\frac{d}{4}-1}}\Big),
\end{equation}
where $\{\partial_k\}_{k=1}^d$ is a normal coordinate determined in Lemma \ref{lemma6} of the Supplementary and $c_k({x})$ is uniformly bounded for all $k=1,\ldots,d$.
\end{col}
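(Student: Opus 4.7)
The starting point is the bias part of the MSE expansion in Theorem \ref{thm:boundary}. The squared leading bias is $\frac{h^2}{4}[\tr(\Hess m(x)\nu_{1,x,22})]^2/\nu_{1,x,11}^2$, which identifies the leading order conditional bias as $\frac{h}{2}\tr(\Hess m(x)\nu_{1,x,22})/\nu_{1,x,11}$. The stated error terms in \eqref{col:boundary:bias} are just the square roots of the corresponding remainders in \eqref{thm:bdry:cond_mse}, which I would obtain by the same argument used to extract the bias from MSE (writing $\EE\{\hat m(x,h)-m(x)|\mathcal{X}\} = \vv_1^T(\XX_x^T\WW_x\XX_x)^{-1}\XX_x^T\WW_x(\vm - \XX_x \vbeta_{x}^{*})$ for the first order Taylor polynomial $\vbeta_x^*$, and Taylor-expanding $\vm$ to second order as is done in the proof of Theorem \ref{thm:boundary}).

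The key step is showing that, under smoothness of $\partial\MM$, the matrix $\nu_{1,x,22}$ can be made diagonal by an appropriate choice of the normal coordinate basis. Let $x^*\in\partial\MM$ be the nearest boundary point to $x$, with $r:=d(x,x^*)\leq\sqrt{h}$. Because $\partial\MM$ is a smooth hypersurface of $\MM$, its second fundamental form is uniformly bounded, so near $x^*$ the boundary deviates from its tangent hyperplane (inside $T_{x^*}\MM$) by $O(h)$ within the geodesic ball $B^\MM_{\sqrt{h}}(x)$. Parallel transporting the inward unit normal at $x^*$ along the minimizing geodesic back to $x$ produces a unit vector $e_d\in T_x\MM$; extend it to an orthonormal basis $\{\partial_k\}_{k=1}^d$ of $T_x\MM$ with $\partial_d = e_d$. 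Since any orthonormal basis of $T_x\MM$ yields a normal coordinate chart, this is a legitimate choice, and the basis-independence of the trace $\tr(\Hess m(x)\nu_{1,x,22})$ lets us replace Lemma \ref{lemma6}'s original choice by this one, with the resulting rotation contributing only to the curvature--geodesic error terms already accounted for in Theorem \ref{thm:boundary}.

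In these coordinates, combining the manifold curvature bound ($|\exp_x^{-1}(y)-y| = O(h)$ for $y\in B^\MM_{\sqrt{h}}(x)$) with the boundary curvature bound gives
\[
\mathfrak{D}(x) = \bigl(B^{\RR^d}_{\sqrt{h}}(0)\cap\{u_d<r\}\bigr)\,\triangle\,E_h,\qquad \vol(E_h)=O(h^{(d+1)/2}),
\]
so after rescaling by $1/\sqrt{h}$ the integration domain is, up to an $O(h^{1/2})$-measure symmetric difference, the half-ball $B^{\RR^d}_1(0)\cap\{u_d<r/\sqrt{h}\}$. This set is symmetric under $u_i\mapsto -u_i$ for each $i<d$, and because $K$ is radial, the integrand $K(\|u\|)u_iu_j$ in the off-diagonal entries of $\nu_{1,x,22}$ is odd in $u_i$ whenever $i\neq j$ and $\min(i,j)<d$. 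Hence every off-diagonal entry of $\nu_{1,x,22}$ vanishes to leading order, and its off-diagonal contribution to $\tr(\Hess m(x)\nu_{1,x,22})$ is $O(h^{1/2})$, absorbable into the $O_p(h^{1/2}\hpca^{3/4}+h\hpca^{1/2})$ error.

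It then remains to identify the leading expression. Because $\{\partial_k\}$ is a normal basis at $x$, the Christoffel symbols vanish there, so $(\Hess m(x))_{kk}=\nabla^2_{\partial_k,\partial_k}m(x)$. Setting
\[
c_k(x) := \frac{(\nu_{1,x,22})_{kk}}{\nu_{1,x,11}} = \frac{\int_{\frac{1}{\sqrt{h}}\mathfrak{D}(x)} K(\|u\|)u_k^2\,du}{\int_{\frac{1}{\sqrt{h}}\mathfrak{D}(x)} K(\|u\|)\,du}
\]
yields the desired representation; uniform boundedness of $c_k(x)$ is immediate since the numerator is bounded by $\int_{B^{\RR^d}_1(0)}K(\|u\|)\,du$ and the denominator is bounded below uniformly in $x\in\MM_{\sqrt{h}}$ by assumption (A3) together with the non-degeneracy guaranteed by the smoothness of $\partial\MM$. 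The main obstacle I anticipate is making the last claim precise: bounding $\nu_{1,x,11}$ away from zero uniformly over $\MM_{\sqrt{h}}$ requires verifying that, for a smooth boundary, the rescaled half-ball $B^{\RR^d}_1(0)\cap\{u_d<r/\sqrt{h}\}$ has volume uniformly bounded below as $r/\sqrt{h}$ ranges over $[0,1]$, which in turn rests on the control of the boundary curvature and of the gap between $\mathfrak{D}(x)$ and its half-ball model.
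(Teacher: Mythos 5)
Your overall strategy matches the paper's: pick a normal coordinate frame with $\partial_d$ along the inward boundary normal, symmetrize the rescaled domain $h^{-1/2}\mathfrak D(x)$ in the first $d-1$ coordinates, and use parity of the radial kernel to annihilate the off-diagonal entries of $\nu_{1,x,22}$, which then identifies the bias as a bounded linear combination of $\nabla^2_{\partial_k,\partial_k}m$. That is essentially the argument in the Supplementary's proof of Corollary~\ref{col:smoothbdry}, which introduces the symmetrized domain $\tilde{\mathfrak D}(x)$ and the scalars $\alpha(x)$, $\beta(x)$, $\gamma_k(x)$ in~\eqref{thm:bdry:cond_bias:smooth:alpha}--\eqref{thm:bdry:cond_bias:smooth:gamma}.

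The step that does not go through is the identification $c_k(x) = (\nu_{1,x,22})_{kk}/\nu_{1,x,11}$. At the boundary the off-diagonal block $\nu_{1,x,12}=\int_{h^{-1/2}\mathfrak D(x)}K(\|u\|)u^T\,du$ is \emph{not} negligible: because the domain is asymmetric in $u_d$, $\beta(x):=\int K(\|u\|)u_d\,du$ is $O(1)$, so the $(1,1)$ entry of $\nu_{1,x}^{-1}$ is $(\nu_{1,x,11}-\nu_{1,x,12}\nu_{1,x,22}^{-1}\nu_{1,x,12}^T)^{-1} = (\alpha-\beta^2/\gamma_d)^{-1} + O(h)$, not $\nu_{1,x,11}^{-1}$; moreover the $(1,2)$ block $\nu^{12}_{1,x}=-\nu^{11}_{1,x}\nu_{1,x,12}\nu_{1,x,22}^{-1}$ is also $O(1)$ and picks up a nonzero contribution from the odd-in-$u_d$ part of $\int K\,u^T\Hess m(x)\,u\cdot u\,du$. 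The correct bias is $\tfrac{h}{2}\vv_1^T\nu_{1,x}^{-1}\bigl[\,\tr(\Hess m\,\nu_{1,x,22}),\ \int K\,u^T\Hess m\,u\cdot u\,du\,\bigr]^T$, and the resulting coefficients are of the form $c_k(x)=\dfrac{\gamma_k(x)\gamma_d(x)-\beta(x)\delta_k(x)}{\alpha(x)\gamma_d(x)-\beta(x)^2}$ with $\delta_k(x):=\int K(\|u\|)u_k^2u_d\,du$ (the paper records the dominant $\gamma_k\gamma_d/(\alpha\gamma_d-\beta^2)$ piece). Your $\gamma_k/\alpha$ drops all $\beta$-terms, so it is not consistent with the bias formula actually derived in the proof of Theorem~\ref{thm:boundary} (eq.~\eqref{thm:bdry:cond_bias:proof}); you appear to have read a simplified display in the Theorem statement at face value. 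A second, smaller point: your symmetric-difference bound $\vol(E_h)=O(h^{(d+1)/2})$ is what the tangent-hyperplane approximation of $\partial\MM$ gives, and after rescaling it contributes an $O(h^{1/2})$ perturbation to the $\nu$-integrals, hence an $O(h^{3/2})$ bias error; the paper instead approximates the boundary by a symmetric quadric (degree-$2$ polynomial graph), which improves this to $O(h^{(d+2)/2})$ and hence an $O(h^2)$ bias correction, needed to keep the remainder inside the stated $O_p(h^{1/2}\hpca^{3/4}+h\hpca^{1/2})$ term when $\hpca\ll h$. Finally, for uniform boundedness of $c_k(x)$ the paper invokes the Cauchy--Schwarz inequality to get $\alpha(x)\gamma_d(x)-\beta(x)^2>0$ together with compactness of $\MM$, which is more direct than controlling half-ball volumes as you suggest.
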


Recall that when the p.d.f. of the random vector $X$ is well-defined on $\RR^p$,
denoted as $f$, so that $\supp f$ satisfies some weak conditions, it is shown in \cite{ruppert_wand:1994} that the conventional LLR is unbiased up to the second order term even when $x$ is close to the boundary. Additionally, the LLR is design adaptive, that is, the asymptotic bias does not depend on $f$. These properties render the LLR popular in applications. In the degenerate case i.e. $X$ lies on the manifold $\MM$, we can see from the proofs of Theorem \ref{thm:interior} and Theorem \ref{thm:boundary} that MALLER also processes these nice properties. There properties of MALLER have important implications from the manifold learning viewpoint, which will be discussion in Section \ref{diffusionmap}. 

\subsection{Gradient and Covariant Derivative Estimate}

When the p.d.f. $f$ of $X$ is non-degenerate on $\RR^p$, it is well known that the traditional LLR provides an estimate of the gradient of $m$ \cite{ruppert_wand:1994, fan_gijbels:1996}. In the manifold setup, the notion of differentiation is generalized naturally to the ``covariant derivative'', and hence the gradient if the manifold is Riemannian. A brief introduction of the notion of covariant derivative, gradient, exterior derivative and their relationship is provided in the Supplementary \ref{appendix:background}. In this subsection, we show that MALLER provides an estimate of the covariant derivative of $m$.

\begin{thm}\label{thm:interior:cov}
Suppose ${x}\in\MM\backslash\MM_{\sqrt{h}}$, $\hpca\asymp n^{-2/(d+1)}$ and $h\geq\hpca$. The conditional MSE for the estimator $\widehat{\nabla_{\partial_i({x})}m}({x},h)$ given in (\ref{algo:estimator:Dmhat}) is
\begin{equation}
\begin{split}
&\text{MSE}\{\widehat{\nabla_{\partial_i({x})}m}({x},h)|\mathcal{X}\}= h^2\Bigg[\frac{\mu_{1,2}}{d}\frac{\nabla_{\partial_i} f({x})}{f({x})}\Delta m({x})-\frac{\mu_{1,2}d\int_{S^{d-1}}\theta^T\Hess m({x})\theta \theta \nabla_\theta f({x})\ud \theta}{|S^{d-1}|f({x})}\Bigg]^2\nonumber\\
&+\frac{1}{nh^{\frac{d}{2}+1}}\frac{d\mu_{2,2}\sigma^2({x})f({x})}{\mu^2_{1,2}}+O_p(h^{\frac{5}{2}}+h^{\frac{3}{2}}\hpca^{\frac{3}{4}})+O_p\Big(\frac{1}{n^{\frac{1}{2}}h^{\frac{d}{4}-\frac{3}{2}}}+\frac{1}{nh^{\frac{d}{2}}}+\frac{1}{n^{\frac{3}{2}}h^{\frac{3d}{4}+1}}\Big),\nonumber
\end{split}
\end{equation}
where $\{\partial_i({x})\}_{i=1}^d$ is an orthonormal basis of $T_{{x}}\MM$ described in Lemma \ref{lemma6} of the Supplementary.
\end{thm}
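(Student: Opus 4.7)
The plan is to follow the same architecture as the proof of Theorem \ref{thm:interior}, but to extract the $(i+1)$-th coordinate of $\hat{\vbeta}_x$ rather than the first. I would begin with the decomposition
\[
\hat{\vbeta}_x - \vbeta_x^\star \,=\, (\XX_x^T \WW_x \XX_x)^{-1} \XX_x^T \WW_x (\vm - \XX_x \vbeta_x^\star) + (\XX_x^T \WW_x \XX_x)^{-1} \XX_x^T \WW_x (\vY - \vm),
\]
where $\vbeta_x^\star := \bigl(m(x),(\nabla_{\partial_1} m)(x),\ldots,(\nabla_{\partial_d} m)(x)\bigr)^T$ and $\{\partial_k(x)\}_{k=1}^d$ is the orthonormal basis of $T_x\MM$ produced by Lemma \ref{lemma6}. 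The first term on the right contributes the conditional bias of $\widehat{\nabla_{\partial_i(x)} m}(x,h) = \vv_{i+1}^T \hat{\vbeta}_x$ and the second the conditional variance.

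The design matrix analysis already established for Theorem \ref{thm:interior} is reused essentially verbatim. Applying (\ref{definition:expectation:manifold}), the change of variables $u = \exp_x^{-1}(X_l)$, the polar decomposition $u = r\theta$ and $\int_{S^{d-1}} \theta\theta^T\,d\theta = (|S^{d-1}|/d) I_d$ yields
\[
\frac{1}{n}\XX_x^T \WW_x \XX_x \,=\, f(x) \begin{pmatrix} 1 & 0 \\ 0 & h(\mu_{1,2}/d)\,I_d \end{pmatrix} + (\text{correction}),
\]
where the correction absorbs first-order contributions from $\nabla f$, the curvature of $\iota$ through $\II_x$, and the discrepancy between $\vx_l = B_x^T(X_l - x)$ and the true normal coordinate $u_l = \exp_x^{-1}(X_l)$. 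The required uniform bound $\|\vx_l - u_l\| = O(\|u_l\|^2) + O_p(\hpca^{3/4}\|u_l\|)$ over $X_l \in \mathcal{N}^{\text{true}}_{x,h}$ is exactly the one already established in Theorem \ref{thm:interior}, and the hypotheses $\hpca \asymp n^{-2/(d+1)}$ and $h \geq \hpca$ are what keep these corrections subleading. Inverting the leading block shows that the $(i+1,i+1)$-entry of $(\XX_x^T\WW_x\XX_x)^{-1}$ is of order $d/(n f(x)\mu_{1,2} h)$, a factor $1/h$ larger than the $(1,1)$-entry; this is the origin of the $1/h$ inflation of the variance relative to Theorem \ref{thm:interior}.

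For the bias I would substitute the Taylor expansion
\[
m(X_l) \,=\, m(x) + \sum_k u_{l,k}(\nabla_{\partial_k}m)(x) + \tfrac12 u_l^T \Hess m(x)\, u_l + O(\|u_l\|^3)
\]
into $\vm - \XX_x\vbeta_x^\star$ and simultaneously expand $f(X_l) = f(x) + \sum_k u_{l,k}(\nabla_{\partial_k} f)(x) + O(\|u_l\|^2)$ inside the integrals produced by $\XX_x^T \WW_x(\vm - \XX_x \vbeta_x^\star)$. The quadratic-in-$u$ piece weighted by the constant $f(x)$ part of the density vanishes by the odd symmetry $\int_{B^{\RR^d}_1} K(\|v\|) v_i v_j v_k\, dv = 0$; this is the standard reason the gradient bias is of order $h$ rather than $O(1)$. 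The surviving contribution comes from the interaction of $\Hess m$ with $\nabla f$: the trace part of $\Hess m$ produces the first bracketed term $\frac{\mu_{1,2}}{d}\frac{\nabla_{\partial_i}f(x)}{f(x)}\Delta m(x)$ in (\ref{thm:interior:cov}), while the full triple-$\theta$ contraction against the kernel and $\nabla f$ produces the second. For the variance I would use the sandwich identity
\[
\mathrm{Var}\bigl(\vv_{i+1}^T \hat{\vbeta}_x\,\big|\,\mathcal{X}\bigr) \,=\, \vv_{i+1}^T (\XX_x^T\WW_x\XX_x)^{-1}\XX_x^T \WW_x \SSS_x \WW_x \XX_x (\XX_x^T\WW_x\XX_x)^{-1}\vv_{i+1}
\]
with $\SSS_x = \diag(\sigma^2(X_l))$; the middle factor evaluates, via the same change of variables, to a matrix whose $(i+1,i+1)$-block is $n\sigma^2(x) f(x)(\mu_{2,2}/d) h^{-d/2+1}$, and combining with the two copies of $[(\XX_x^T\WW_x\XX_x)^{-1}]_{i+1,i+1}$ produces the stated leading variance.

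The principal difficulty is not any individual computation but the careful propagation of the several small-error sources through the $1/h$ inflation encoded in the $(i+1,i+1)$-entry of the inverse. Each deterministic remainder in the proof of Theorem \ref{thm:interior}, together with each stochastic remainder controlling the PCA basis fluctuation, the Euclidean-versus-geodesic neighbor discrepancy, and the sampling fluctuations of the design, must be re-tracked under this inflation to produce precisely the remainders $O(h^{5/2} + h^{3/2}\hpca^{3/4})$ and $O_p(n^{-1/2}h^{-d/4+3/2} + n^{-1}h^{-d/2} + n^{-3/2}h^{-3d/4-1})$ appearing in (\ref{thm:interior:cov}). Verifying, under (A1)--(A4) together with $\hpca \asymp n^{-2/(d+1)}$ and $h \geq \hpca$, that every such remainder stays strictly subleading to the $h^2$ bias-squared and the $(nh^{d/2+1})^{-1}$ variance is the essential and most tedious bookkeeping step.
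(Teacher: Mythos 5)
Your architecture matches the paper's: the same bias--variance decomposition, the same reuse of the design-matrix expansions established for Theorem~\ref{thm:interior}, the odd-moment observation for why the gradient bias is $O(h)$ rather than $O(1)$, and the sandwich identity for the variance with the $1/h$ inflation from the $(i+1,i+1)$-entry of the inverse. All of this is exactly how the paper argues (see (\ref{thm:interior:proof:EhatDm}) together with (\ref{mfd:interior:Xinv}), (\ref{mfd:interior:Hess}) and (\ref{mfd:interior:XWVWX})).

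However, the coordinate-discrepancy bound you invoke is misstated, and the error is not cosmetic. You write $\|\vx_l - u_l\| = O(\|u_l\|^2) + O_p(\hpca^{3/4}\|u_l\|)$ and claim this is the bound used in Theorem~\ref{thm:interior}. It is not. From Lemma~\ref{lemma2} one has $X_l - x = t\iota_*\theta + \tfrac{t^2}{2}\II_x(\theta,\theta) + O(t^3)$; projecting onto the PCA vector $\vu_j(x)$, the second-fundamental-form contribution does \emph{not} produce an $O(\|u_l\|^2)$ error, because $\II_x(\theta,\theta)$ is orthogonal to $\iota_*T_x\MM$ and, by Lemma~\ref{lemma6}, $\vu_j(x)$ differs from $\iota_*\partial_j(x)$ in the normal directions only by $O_p(\hpca^{3/4})$. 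Hence $\langle\II_x(\theta,\theta),\vu_j(x)\rangle = O_p(\hpca^{3/4})$, and the resulting bound (the analogue of (\ref{uerelation}) and (\ref{pivu})) is
\[
\vx_{l,j} - u_{l,j} \,=\, O(\|u_l\|^3) + O_p\bigl(\hpca^{3/4}\|u_l\|^2\bigr) + O_p\bigl(\hpca^{5/4}\|u_l\|\bigr),
\]
which for $\hpca\leq h$ and $\|u_l\|=O(\sqrt{h})$ is $O_p(h^{3/2})$, not $O(h)$.

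This matters precisely because of the $1/h$ inflation you correctly identify. Substituting your stated bound $O(\|u_l\|^2)$ into $\sum_k(u_{l,k}-\vx_{l,k})\nabla_{\partial_k}m(x)$, then applying $\frac1n\sum_l K_h\,\vx_{l,i}$ and the $(i+1,i+1)$-entry of $(\XX_x^T\WW_x\XX_x)^{-1}\asymp h^{-1}$, yields an uncontrolled $O(h^{1/2})$ contribution to the gradient bias --- which would dominate the claimed leading $O(h)$ term. With the sharp bound the same computation gives $O_p(h^{1/2}\hpca^{3/4}+h^{3/2})$, which is exactly the remainder reported in the theorem. So the decisive fact your proposal needs to invoke explicitly --- and currently elides --- is the orthogonality of $\II_x$ to the tangent plane combined with the $O_p(\hpca^{3/4})$ accuracy of the PCA normal components. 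Without it the bias order is not established.
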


\begin{thm}\label{thm:boundary:cov}
Suppose ${x}\in\MM_{\sqrt{h}}$, $\hpca\asymp n^{-2/(d+1)}$ and $h\geq \hpca$. The conditional  MSE for the estimator $\widehat{\nabla_{\partial_i({x})}m}({x},h)$ given in (\ref{algo:estimator:Dmhat}) is
\begin{eqnarray}
\lefteqn{\text{MSE}\{\widehat{\nabla_{\partial_i({x})}m}({x},h)| \mathcal{X}\}=h\bigg(\frac{\vv^T_{i+1}\nu^{-1}_{1,x}}{2}\int_{\frac{1}{\sqrt{h}}\mathfrak{D}({x})}K(\|u\|)u^T\Hess m({x})u\left[\begin{array}{c}
1\\ u
\end{array}\right]\ud u\bigg)^2} \nonumber\\
&&\hspace{-20pt}+\frac{\vv^T_{i+1}\nu_{1,x}^{-1}\nu_{2,x}\nu^{-1}_{1,x}\vv_{i+1}}{nh^{\frac{d}{2}+1}}\frac{\sigma^2({x})}{f({x})}+O_p\Big(h^{\frac{1}{2}}\hpca^{\frac{3}{4}}+h\hpca^{\frac{1}{2}}\Big)+O_p\Big(\frac{1}{n^{\frac{1}{2}}h^{\frac{d}{4}-\frac{3}{2}}}+\frac{1}{nh^{\frac{d}{2}+\frac{1}{2}}}+\frac{1}{n^{\frac{3}{2}}h^{\frac{3d}{4}}}\Big),\nonumber
\end{eqnarray}
where $\{\partial_i({x})\}_{i=1}^d$ is an orthonormal basis of $T_{{x}}\MM$ described in Lemma \ref{lemma6} of the Supplementary.
\end{thm}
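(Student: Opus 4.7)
The argument closely parallels that of Theorem~\ref{thm:boundary}, with the crucial change that we extract the $(i+1)$-th coordinate of $\hat{\vbeta}_x$ instead of the first. Using the rescaling $\hat{\tilde{\vbeta}}_x := C\hat{\vbeta}_x$ with $C$ from (\ref{thm2:C}), the normal equations take the standardized form $\hat{\tilde{\vbeta}}_x = (C^{-1}\XX_x^T\WW_x\XX_x C^{-1})^{-1} C^{-1}\XX_x^T\WW_x\vY$. Since $C^{-1}\vv_{i+1} = h^{-1/2}\vv_{i+1}$ for $i\geq 1$, the estimator obeys $\vv_{i+1}^T\hat{\vbeta}_x = h^{-1/2}\vv_{i+1}^T\hat{\tilde{\vbeta}}_x$. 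This factor, once squared, is what shifts the variance rate from $(nh^{d/2})^{-1}$ in Theorem~\ref{thm:boundary} to $(nh^{d/2+1})^{-1}$ here, and reduces the bias magnitude only from $O(h)$ to $O(\sqrt{h})$ (so the bias-squared contribution to the MSE drops from $O(h^2)$ to $O(h)$).

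Write $\vY=\vm+\SSS^{1/2}\veps$ with $\SSS=\diag(\sigma^2(X_l))$; the first summand yields the conditional bias and the second the conditional variance. For the bias, I will Taylor expand $m\circ\exp_x$ at $0\in T_x\MM$ to second order in normal coordinates, and replace $B_x^T(X_l-x)$ by $\exp_x^{-1}(X_l)$ modulo the second-fundamental-form correction of order $|u|^2/\tau$ (from the embedding) and the PCA correction of order $|u|\,\hpca^{3/4}$ given by Lemma~\ref{lemma6} in the Supplementary. Concentration of the weighted empirical moments $\tfrac{1}{n}\sum_l K_h(X_l,x)\vx_l\vx_l^T \mbox{I}_{\mathcal{N}^{\text{true}}_{x,h}}(X_l)$ about their limits yields $f(x)\,\nu_{1,x}$ after the change of variables $u=(X_l-x)/\sqrt{h}$ and restriction to $\mathfrak{D}(x)/\sqrt{h}$. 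Because $x\in\MM_{\sqrt{h}}$, this domain is asymmetric, so odd-in-$u$ moments no longer vanish; the LLR absorbs the linear part of the Taylor expansion exactly, but the quadratic remainder leaves the residual $\tfrac{1}{2}\vv_{i+1}^T\nu_{1,x}^{-1}\int K(\|u\|)u^T\Hess m(x)u\,[1,u^T]^T\,du$ which, after the $h^{-1/2}$ rescaling and squaring, produces the leading $O(h)$ term in the MSE.

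For the variance, apply $\operatorname{Var}(\hat{\vbeta}_x|\mathcal{X}) = (\XX_x^T\WW_x\XX_x)^{-1}\XX_x^T\WW_x\SSS\WW_x\XX_x(\XX_x^T\WW_x\XX_x)^{-1}$. Passing both sandwich factors to their integral limits yields the matrix $\nu_{1,x}^{-1}\nu_{2,x}\nu_{1,x}^{-1}$ with prefactor $\sigma^2(x)/f(x)$, and the two $h^{-1/2}$ factors from $C^{-1}\vv_{i+1}$ combine with the standard kernel scaling to give the rate $n^{-1}h^{-d/2-1}$. The stated remainders collect four sources: (i) higher-order Taylor terms of $m$ and $f$; (ii) the PCA approximation of the tangent plane via Lemma~\ref{lemma6}; (iii) the curvature-controlled discrepancy between the Euclidean ball $B^{\RR^p}_{\sqrt{h}}(x)$ and the geodesic ball $B^{\MM}_{\sqrt{h}}(x)$, governed by Lemma~\ref{lemma4} of the Supplementary together with assumption~(A4); and (iv) stochastic fluctuation of the weighted empirical sums about their expectations, controlled by Bernstein-type inequalities.

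The main obstacle will be bookkeeping the $h^{-1/2}$ amplification of every error source. Unlike in the interior, where symmetry forces the odd moments of $K$ to vanish, at the boundary $\nu_{1,x}$ has non-trivial off-diagonal blocks $\nu_{1,x,12}$, so its inverse couples the function and gradient coordinates and one cannot separately diagonalize the argument. Consequently, a careful matrix-perturbation analysis is needed to show that the PCA-induced perturbation of $B_x$, of order $\hpca^{3/4}$, contributes only $O_p(h^{1/2}\hpca^{3/4}+h\hpca^{1/2})$ to the MSE rather than a worse rate after the extra $h^{-1/2}$ scaling; this is precisely where the assumption $\hpca\ll h$ enters decisively, guaranteeing that the tangent-plane estimation remains a lower-order effect relative to the leading bias and variance.
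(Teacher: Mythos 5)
Your proposal is correct and follows essentially the same route as the paper: the paper proves Theorem~\ref{thm:boundary:cov} as a coda to the proof of Theorem~\ref{thm:boundary}, reusing the same expansions for $\frac{1}{n}\XX_x^T\WW_x\XX_x$, its inverse, $\frac{1}{n}\XX_x^T\WW_x\QQQ_m(x)$, and the variance sandwich, and then extracting the $(i+1)$-st coordinate with the identity $\vv_{i+1}^T C^{-1}=h^{-1/2}\vv_{i+1}^T$ to pick up the extra $h^{-1/2}$ in both bias and standard deviation. Your accounting of how that factor shifts the variance to $(nh^{d/2+1})^{-1}$, reduces the leading bias to $O(\sqrt h)$, and how the PCA error from Lemma~\ref{lemma6} (at its weaker boundary rate $O_p(\hpca^{3/4})$ tangentially, $O_p(\hpca^{1/2})$ normally) is kept subdominant under $\hpca\le h$, matches the paper's computation; the only cosmetic deviation is that the paper controls stochastic fluctuations by the CLT rather than Bernstein bounds.
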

 
{
Based on Theorem \ref{thm:interior:cov}, \ref{thm:boundary:cov} and Section \ref{appendix:background} of the Supplementary, we know that the estimator (\ref{algo:estimator:CovDeri}) indeed can be used to estimate the embedded gradient of $m$. Since the application of the estimate of the gradient is not the focus of this paper, we refer the readers to \cite{coifman_lafon:2006,Mukherjee:2010}.}

\section{Numerical Examples}\label{numerics}
To demonstrate the applicability of the proposed algorithm MALLER, we test it on a series of simulations and a real dataset and compared it with the nonparametric exterior derivative estimator (NEDE), nonparametric adaptive lasso exterior derivative estimator (NALEDE), nonparametric exterior derivative estimator for the ``large $p$, small $n$'' (NEDEP) and nonparametric adaptive lasso exterior derivative estimator for the ``large $p$, small $n$'' (NALEDEP) proposed in \cite{aswani_bickel:2011}, for which the codes are provided by the authors of \cite{aswani_bickel:2011}\footnote{\url{http://www.eecs.berkeley.edu/~aaswani/EDE_Code.zip}}. The code for implementation of MALLER is in the authors' homepage\footnote{\url{http://www.math.princeton.edu/~hauwu/regression.zip}}. 


All the observed values of the predictors in both the training dataset and the testing dataset are normalized by $x^0_l :=(x_l-\hat{\mu})/s$, where $\hat{\mu}$ is the sample mean of $\{x_l\}_{l=1}^{n}$, $l=1,\ldots,n+10$ and $s=\max_{i,j=1,\ldots,n}\|x_i-x_j\|_{\RR^p}$. In order to facilitate the notation we write $x_l$ instead of $x^0_l$ in the sequel. In step 1 of our algorithm, we used the MLE dimension estimation code provided by the authors of \cite{levina_bickel:2005}\footnote{\url{ http://www.stat.lsa.umich.edu/~elevina/mledim.m}} to evaluate the intrinsic dimension of the manifold. In step 2, we used the code provided by the authors of \cite{Zelnik-Manor_Perona:2004}\footnote{\url{http://www.vision.caltech.edu/lihi/Demos/SelfTuningClustering.html}}. In step 3, we chose  $\hpca=0.015$. In the bandwidth selection step, for each regressant, we worked out the bandwidth selection procedure given in Section \ref{bandwidth} on $21$ logarithmically equi-spaced candidate bandwidths in the interval $[0.01, 0.1]$ when $d=1$ and $[0.01, h_d]$ when $d>1$, where 
\begin{equation}\label{choice:hd}
h_d=\frac{1}{4}\bigg( \frac{d\Gamma(d/2)}{\sqrt{\pi}\Gamma\left((d+1)/2\right)} \bigg)^{2/d}(0.1)^{1/d}.
\end{equation}
This choice of $h_d$ is motivated by the following facts. Fix $d>1$. The volume of $S^d$ is $|S^d|=\frac{2\pi^{\frac{d+1}{2}}}{\Gamma(\frac{d+1}{2})}$, where $\Gamma$ is the Gamma function, and the volume of a geodesic ball of radius $0<\delta(d)\ll1$ centered at ${x}\in S^d$, denoted as $B^{S^d}_{\delta(d)}({x})$, is approximately $\frac{\delta(d)^{d}|S^{d-1}|}{d}=\frac{2\pi^{d/2}\delta(d)^{d}}{d\Gamma(d/2)}$. Thus, the ratio of the volume of $B^{S^d}_{\delta(d)}({x})$ to $|S^{d}|$ is $r(d,\delta(d))=\frac{\delta(d)^{d}\Gamma((d+1)/2)}{\sqrt{\pi}d\Gamma(d/2)}$. Suppose $\delta(d)=\delta\ll 1$ for all $d$, then $r(d,\delta)$ gets smaller as $d$ increases.  That is, if the number of data points sampled from $S^d$ is the same and $\delta(d)$ is fixed for all $d$, the number of data points located in $B^{S^d}_{\delta(d)}({x})$ decreases to zero exponentially. This fact plays a role in the numerics, especially in the bandwidth selection problem, since in practice the number of neighboring points is not controllable. We thus choose the largest bandwidth $h_d$ by solving 
$\frac{(2\sqrt{h_d})^{d}\Gamma((d+1)/2)}{\sqrt{\pi}d\Gamma(d/2)} = r(1,0.1)=\frac{\sqrt{0.1}}{\pi}$,
which leads to (\ref{choice:hd}).
We emphasize the non-optimality of this scheme to set the candidate bandwidths for general manifolds of dimension $d$, which is out of the scope of this paper. The kernel function $K$ used in step 4 of our MALLER algorithm was taken as $K(u)=\exp(-7u^2)\mbox{I}_{[0,1]}(u)$.

In Sections \ref{simulation2} -- \ref{isomapface} we report the root average square estimation error (RASE) to measure the accuracy of different estimators:
$$
\text{ RASE}=\sqrt{\frac{1}{10} \sum_{i=n+1}^{n+10} \big|\hat{m}(x_i)-m(x_i)\big|^2 },
$$
where $\hat{m}(x_i)$ is the result of each estimator.

We ran our simulations and data analysis on a computer having $96$GB of ram, two Intel Xeon X5570 CPUs, each with four cores running at $2.93$GHz. No parallel computation was implemented.

\subsection{Simulated data: regression on the Klein bottle}\label{simulation2}
Consider the 2-dimensional closed and smooth manifold, the Klein bottle, embedded in $\RR^4$, which is parametrized by $\phi_{\text{Klein}}:[0,2\pi)\times[0,2\pi)\to\RR^4$ so that
$$
(u,v)\stackrel{\phi_{\text{Klein}}}{\mapsto}\big((2\cos v+1)\cos u,\,(2\cos v+1)\sin u,\,2\sin v\cos(u/2),\,2\sin v\sin(u/2)\big).
$$
We sampled $n=1500$ or $1000$ points uniformly from $[0,2\pi)\times[0,2\pi)$, denoted as $\{(U_l,V_l)\}_{l=1}^n$, and then obtained the corresponding  $n$ observations $\{X_l\}_{l=1}^n$ on the predictors $X$  by the parametrization $\phi_{\text{Klein}}$. Notice that the uniform sampling design on $[0,2\pi)\times[0,2\pi)$ corresponds to a non-uniform sampling design on the Klein bottle. 
To generate the responses $\{Y_l\}_{l=1}^n$ corresponding to $\{X_l\}_{l=1}^n$, note that the mapping $\phi_{\text{Klein}}$ is 1-1 and onto, so any $(u,v)$ in $[0,2\pi)\times[0,2\pi)$ can be written as
$(u,v)=\phi_{\text{Klein}}^{-1}(x)$ for some $x$ in the embedded Klein bottle.  So, consider the following regression model on the Klein bottle:
$$
Y:= m(X)+\sigma(X)\,\epsilon,
$$
where  
\begin{eqnarray}
&&m(X) := 7\sin(4 U) + 5\cos(2 V)^2 + 6\exp\{-32( (U-\pi)^2+(V-\pi)^2 )\},\nonumber\\
&&\sigma(X) := \sigma_0(1+0.1\cos(U)+0.1\sin(V)),\nonumber
\end{eqnarray} 
$\epsilon\sim\mathcal{N}(0,1)$ is independent of $X$, and
$\sigma_0$ is the noise level (in $Y$) which  determines the signal-to-noise ratio
\begin{equation*}
\text{snrdb}:=10\log_{10}\Big(\frac{\operatorname{Var}{Y}}{\sigma_0^2}\Big).
\end{equation*}
Furthermore, let
\[
W=X+\sigma_X\eta,
\] 
where $\sigma_X\geq 0$, and $\eta$ is a bivariate normal random vector with zero mean and identity covariance matrix,  independent of $X$ and $\epsilon$. 
Consider estimating $m(X)$ based on observations on $(W,Y)$. In this case, $W=X$ and $X$ is observed without error when $\sigma_X=0$, and $W$ is $X$ contaminated with error when $\sigma_X>0$. 
In the simulations, we took $\sigma_X=0$ or $0.2$ and $\text{snrdb}=5$ or $2$. 
For each simulated sample, we drew $n$ observations $\{(W_i,Y_i)\}_{i=1}^n$ to form the training dataset. Then, independent of the training sample, we sampled randomly $10$ points  $\{W_i\}_{i=n+1}^{n+10}$ as the regressants and tried to estimate the values of $m$ at $\{X_{n+j}\}_{j=1}^{10}$ based on $\{(W_i,Y_i)\}_{i=1}^{n}$.

We evaluated the performance of each estimator by computing the average and standard deviation of its  RASE's over $200$ realizations. The estimated dimension by the MLE intrinsic dimension estimator was $2$ for all of the 200 realizatioins, as is expected. The results of all the estimators and their computation time 
are listed in Table \ref{table:Klein} and Table \ref{table:Klein:err}, from which we can draw the following conclusions. When there is no error-in-variable, i.e. $\sigma_X=0$, MALLER outperforms the four competitors in all of the cases, with significantly smaller  RASE average and similar  RASE standard deviation. 
Also, the MALLER performs well when there exists error in the predictors.
The fact that the computation time for MALLER is longer than that for the other four estimators can be explained as follows. Besides the sample size $n$, the computation 
time for the estimators in \cite{aswani_bickel:2011} also depend on the ambient space dimension $p$ which is $4$ in this example. On the other hand, in addition to $n$, the computation time for MALLER also depends on the estimated intrinsic dimension $d$ which is 2 in this example. This fundamental difference between MALLER and those in
\cite{aswani_bickel:2011} will become apparent when $p$ increases and 
$p\gg d$, as in the Isomap face example discussed in Section \ref{isomapface}. 
\scriptsize
\begin{table}[h] 
\begin{center}
{\small
\begin{tabular}{| c | c | c | c | c |  }
\hline
\multirow{3}{*}{} & \multicolumn{4}{|c|}{Klein bottle, $\sigma_X=0$,  RASE. } \\ 
\cline{2-5}
 & \multicolumn{2}{|c|}{$n=1500$} & \multicolumn{2}{|c|}{$n=1000$}\\
\cline{2-5}
  & $\text{snrdb}=5$ & $\text{snrdb}=2$ & $\text{snrdb}=5$ & $\text{snrdb}=2$  \\
\hline
MALLER & $1.8675 \pm 0.5222$ & $2.3818 \pm 0.666$ & $2.3255 \pm 0.5999$ & $2.7454 \pm 0.9151$\\
NEDE & $2.552 \pm 0.5581$ & $2.9382 \pm 0.631$ & $3.4209 \pm 0.6535$ & $3.6469 \pm 0.6793$ \\
NALEDE & $2.5519 \pm 0.5581$ & $2.9417 \pm 0.6331$ & $3.4288 \pm 0.6522$ & $3.6523 \pm 0.6798$ \\
NEDEP & $2.5514 \pm 0.558$ & $2.9371 \pm 0.6313$ & $3.4212 \pm 0.6534$ & $3.6469 \pm 0.6787$ \\
NALEDEP & $2.5511 \pm 0.5583$ & $2.9406 \pm 0.6335$ & $3.429 \pm 0.6524$ & $3.6528 \pm 0.6791$ \\
\hline
\multirow{3}{*}{} & \multicolumn{4}{|c|}{Klein bottle, the computation time. } \\ 
\hline
MALLER & $76.9222 \pm 29.0305$ & $68.114 \pm 22.3079$ & $32.9121 \pm 10.191$ & $32.7163 \pm 11.3034$ \\
NEDE & $6.0438 \pm 0.1573$ & $6.0416 \pm 0.1709$ & $5.569 \pm 0.1514$ & $5.5878 \pm 0.152$ \\
NALEDE & $11.6054 \pm 0.289$ & $11.5148 \pm 0.2853$ & $10.5719 \pm 0.266$ & $10.5617 \pm 0.265$ \\
NEDEP & $11.4768 \pm 0.2978$ & $11.4656 \pm 0.3199$ & $10.5246 \pm 0.2875$ & $10.5576 \pm 0.2896$ \\
NALEDEP & $17.1086 \pm 0.4276$ & $17.0057 \pm 0.4317$ & $15.5967 \pm 0.4015$ & $15.601 \pm 0.4025$ \\
\hline
\end{tabular}
}
\end{center}
\vspace{0pt}
\caption{\small Regression on the Klein bottle without error in the predictors. The averages and standard deviations, over $200$ realizations, of RASE and the computation time (in seconds) for different estimators tested on different configurations.}\label{table:Klein}
\end{table}
\normalsize

\begin{table}[h] 
\begin{center}
\begin{tabular}{| c | c | c | c | c |  }
\hline
\multirow{3}{*}{} & \multicolumn{4}{|c|}{Klein bottle, $\sigma_X=0.2$, RASE. } \\
\cline{2-5}
 & \multicolumn{2}{|c|}{$n=1500$} & \multicolumn{2}{|c|}{$n=1000$}\\
\cline{2-5}
  & $\text{snrdb}=5$ & $\text{snrdb}=2$ & $\text{snrdb}=5$ & $\text{snrdb}=2$  \\
\hline
MALLER & $3.9227 \pm 0.6898$ & $4.02 \pm 0.7214$ & $3.9514 \pm 0.6785$ & $4.0512 \pm 0.6932$\\
NEDE & $3.9754 \pm 0.6508$ & $4.1225 \pm 0.6255$ & $4.1697 \pm 0.6599$ & $4.2845 \pm 0.6483$ \\
NALEDE & $3.9759 \pm 0.6509$ & $4.131 \pm 0.6252$ & $4.1702 \pm 0.6612$ & $4.2848 \pm 0.6494$ \\
NEDEP & $3.9759 \pm 0.652$ & $4.122 \pm 0.6264$ & $4.1708 \pm 0.6601$ & $4.2848 \pm 0.6479$ \\
NALEDEP & $3.9767 \pm 0.6518$ & $4.1227 \pm 0.626$ & $4.171 \pm 0.6619$ & $4.2851 \pm 0.6492$ \\
\hline
\end{tabular}
\end{center}
\caption{\small Regression on the Klein bottle with error in the predictors. The averages and standard deviations over $200$ realizations of RASE for different estimators tested on different configurations.}\label{table:Klein:err}
\end{table}

\subsection{Real data: Isomap face data}\label{isomapface}
We further tested our algorithm on the Isomap face dataset \cite{Tenenbaum_deSilva_Langford:2000}\footnote{\url{http://isomap.stanford.edu/datasets.html}}. The dataset consists of $698$ $64\times 64$ images, denoted as $\{I^{64}_l\}_{l=1}^{698}$, parametrized by three variables: the horizontal orientation, the vertical orientation, and the illumination direction. Thus, the data were sampled from a 3-dimensional manifold embedded in $\RR^{64\times 64}$. 
When we view each image as a point in $\RR^{64\times 64}$, the ambient space dimension $p=64\times 64$ is large, so in \cite{aswani_bickel:2011} the authors suggested to rescale the images from $64\times 64$ to $7\times7$ pixels in size. Denote the resized images of size $k\times k$ as $\{I^{k}_l\}_{l=1}^{698}$, where  $k=1,\ldots,64$. We performed $200$ replications of the following experiment, which is suggested in \cite{aswani_bickel:2011}. Fix $k=7$. We randomly split $\{I^{7}_l\}_{l=1}^{698}$ into a training set consisting of $688$ images and a testing set consisting of $10$ images. The horizontal orientation of the images in the testing set were then estimated based on the training set. Table \ref{table:isomap}, which summaries the results, shows that MALLER improves on the existing methods substantially in the sense of reduced RASE average and standard deviation.  We mention that NEDEP and NALEDEP behave worse than NEDE and NALEDE due to the frequent occurrence of blowup in the iteration, and the reported results are the best ones among several trials we carried out. 

\begin{table}[h] 
\begin{center}
{\small
\begin{tabular}{| c | c | c | c | c |}
\hline
 & \multicolumn{2}{|c|}{Isomap face database, $k=7$}\\
\cline{2-3}
  &  RASE & computation time  \\
\hline
MALLER & $1.2168 \pm 0.8131$ & $131.5847 \pm 17.5136$ \\
NEDE & $1.7852 \pm 1.2122$ & $34.4606 \pm 4.5847$  \\
NALEDE & $1.7759 \pm 1.1995$ & $170.7088 \pm 28.8193$ \\
NEDEP & $1.8685 \pm 1.2413$ & $53.7212 \pm 8.3594$  \\
NALEDEP & $2.8095 \pm 3.6525$ & $187.3745 \pm 31.2623$  \\
\hline
\end{tabular}
}
\end{center}
\caption{\small The averages and standard deviations, over $200$ replications, of  RASE and computation time in seconds for different estimators tested on the resized Isomap face data $\{I^{7}_l\}_{l=1}^{698}$.}\label{table:isomap}
\end{table}

Next, we carried out another $200$ replications of the same experiment but with $k=14, 21$, or $28$. The MLE intrinsic dimension estimate was $3$ in all the replications when $k=7, 14$ or $21$, and was $4$ all the time when $k=28$. The results are given in Table \ref{table:isomap:2}. We mention that when $k=14, 21$ or $28$, it took long time to compute the methods in \cite{aswani_bickel:2011} and the experiment cannot be finished within a reasonable time frame, so we decided not to include them in the comparison. When $k=7,8,\ldots,16$, the estimated time (average over $3$ realizations) to finish one replication for the methods in \cite{aswani_bickel:2011} are plotted in Figure \ref{fig:isomap:time}, which shows clearly the dependence of these methods on the ambient space dimension $k\times k$.

\begin{table}[h] 
\begin{center}
{\small
\begin{tabular}{|c | c | c | c |  }
\hline
  & $k=14$ & $k=21$ & $k=28$ \\
\cline{2-4}
 & \multicolumn{3}{|c|}{Isomap face database,  RASE} \\
\hline
MALLER & $0.9865\pm 0.5473$ & $1.0259\pm 0.5098$ &  $0.9369\pm 0.7403$\\
\hline
 & \multicolumn{3}{|c|}{Isomap face database, computation time}\\
\hline
MALLER & $108.3796\pm 12.0145$ & $148.9841\pm 20.0436$ & $164.3576\pm 28.8329$ \\
\hline
\end{tabular}
}
\end{center}
\caption{\small The averages and standard deviations over $200$ replications of  RASE and computation time in seconds for MALLER tested on the resized Isomap face data $\{I^{k}_l\}_{l=1}^{698}$, $k=14,21,28$. }\label{table:isomap:2}
\end{table}

\begin{figure}[h]	
\begin{center}
\subfigure{
\includegraphics[width=0.5\textwidth]{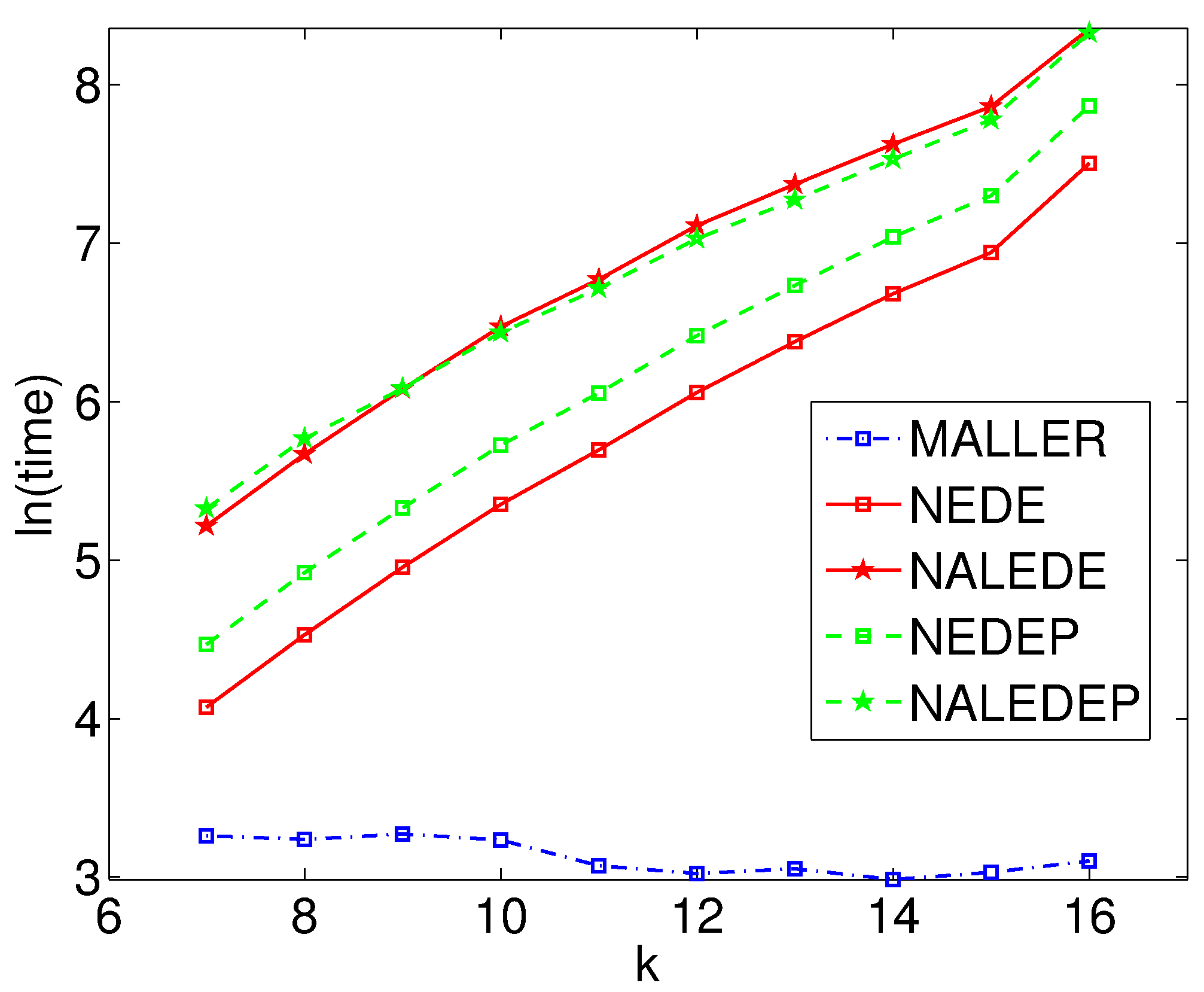}
}
\end{center}
\vspace{-20pt}
\caption{\small The running time for MALLER, NEDE, NALEDE, NEDEP and NALEDEP when $k=7,8,\ldots, 16$. The $y$-axis is in the natural log scale.}\label{fig:isomap:time}
\end{figure}

Note, from Table \ref{table:isomap} and Table \ref{table:isomap:2}, that when $k$ changes from $14$ to $7$ the RASE average of MALLER increases noticeably, and it decreases when $k$ changes from $21$ to $28$. In the following are some partial explanations for these. 
It is clear that resizing the images from $64\times 64$ pixels to $k\times k$ pixels  for a smaller value of $k$ causes a reduction of the resolution of the images. Taking $k=1$, the extremal case, as an example, the images $\{I^1_l\}_{l=1}^{698}$ are scalar values distributed in $\RR$, and obviously the topological structures of $\{I^1_l\}_{l=1}^{698}$ are totally different from that of the original images. 
This fact indicates that over-resizing the images leads to the distortion of the topology, which 
partially explains the increase of the RASE of MALLER when $k$ changes from $14$ to $7$.
Further, the fact that the RASE average dropped again when $k$ changes from $21$ to $28$ may be explained by the reason that, as the estimated intrinsic dimension increased from $3$ to $4$, the extra dimension helps to reduce the estimation error introduced by the complex geometric structure when the resolution is high. 
We emphasize that the above explanations for the RASE average fluctuation need to be quantified with further analysis, which is out of the scope of this paper and will be reported in a future work.

In conclusion, the Isomap face database example shows the strength of MALLER: once the number of observations $n$ is large enough compared with the intrinsic dimension $d$ of the manifold, which may be small compared with the dimension $p$ of the ambient space, our method provides improvement over existing estimators from both the viewpoints of the prediction error and computation time. 

\subsection{Gradient and Covariant Derivative Estimation}
{
We tested our estimator $\widehat{\iota_*\mbox{\tt{grad}}m}(x)$, given in (\ref{algo:estimator:CovDeri}), on the $2$-dimensional torus $\mathbb{T}$ embedded in $\RR^3$ via $\iota$, which is parametrized by, except for a set of measure zero,
\begin{equation}
\phi:(u,v)\mapsto \left( (2+\cos(v))\cos(u), (2+\cos(v))\sin(u), \sin(v) \right),
\end{equation}
where $(u,v)\in I:=(0,2\pi)\times (0,2\pi)$. Considered model (\ref{model1}), where $X=\phi(U,V)$, the regression function $m:\mathbb{T}\to \RR$ is given by $$m(\phi(u,v))=\cos(u)\sin(4v+1),$$ 
$\epsilon\sim \mathcal{N}(0,1)$ and $\sigma(\iota^{-1}(X)) = \sigma_0(1+0.1\cos(U)+0.1\sin(V))$ with $\sigma_0$ chosen so that snrdb$=5$ or $40$.   A direct calculation leads to
\begin{equation}\label{simulation:torus:gradient}
\iota_*\mbox{\tt{grad}}m(\phi(u,v))=\left(
\begin{array}{c}
\sin^2(u)\sin(4v+1)-4\cos(u)^2\sin(v)\cos(4v+1)\\ 
-\sin(u)\cos(u)\sin(4v+1)-4\sin(u)\cos(u)\sin(v)\cos(4v+1)\\
4\cos(u)\cos(v)\sin(4v+1)
\end{array}
\right).
\end{equation}
The detailed calculation of (\ref{simulation:torus:gradient}) can be found in the Supplementary. 

We sampled $6000$ points $\{(U_i,V_i)\}_{i=1}^{6000}$ uniformly from $I$ and then generate $\{(X_i,Y_i)\}_{i=1}^{6000}$ according to the above model. Notice that this sampling scheme is non-uniform on the torus. Then we randomly picked $3000$ points $\{X_i=\phi(U_i,V_i)\}_{i=6001}^{9000}$ as the testing sample, and compute the gradient estimates $\{\widehat{\iota_*\mbox{\tt{grad}} m}(X_i)\}_{i=6001}^{9000}$ based on the training sample $\{(X_i,Y_i)\}_{i=1}^{6000}$. The estimates are visually demonstrated in Figure \ref{fig:gradient}, together with the ground truth (\ref{simulation:torus:gradient}) for comparison.  

\begin{figure}[h]	
\begin{center}
\subfigure{
\includegraphics[width=0.46\textwidth]{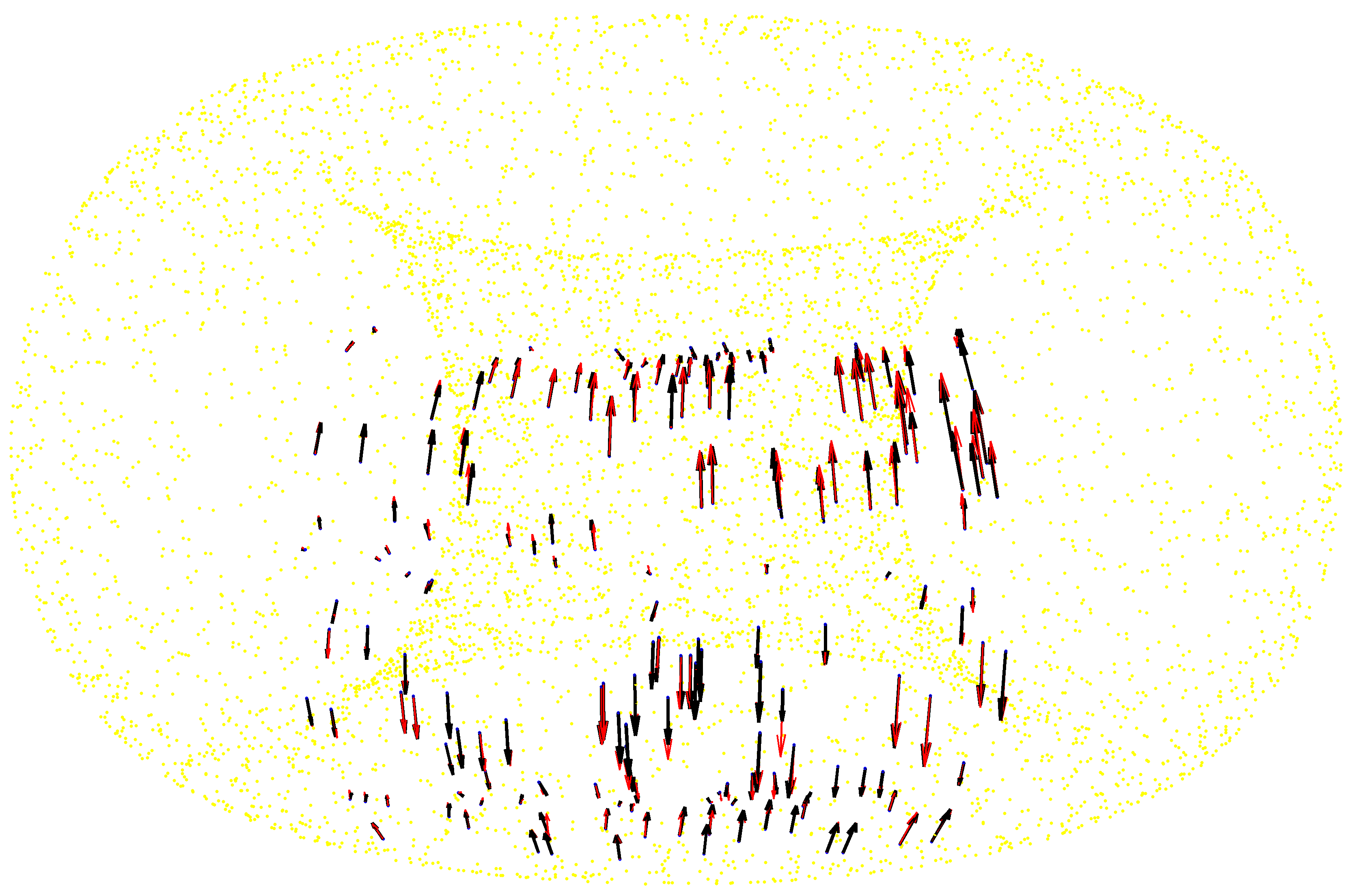}
}
\subfigure{
\includegraphics[width=0.46\textwidth]{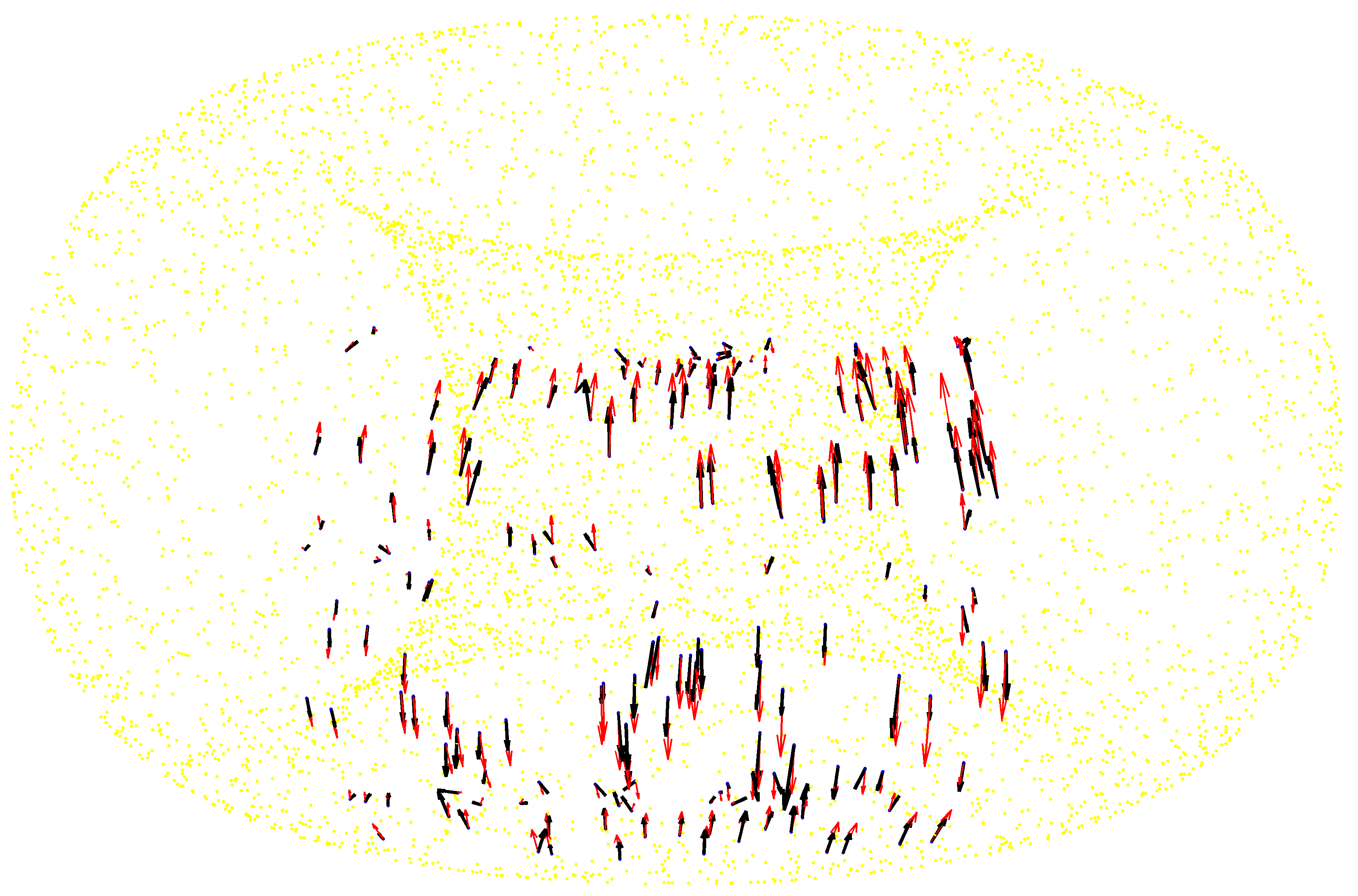}
}
\end{center}
\caption{\small Gradient estimates. Left: snrdb=$40$dB; Right: snrdb=$5$dB. The blue circles are the portion of the testingsample $\{(u_i,v_i)\}_{i=6001}^{9000}$ such that $|v_i|<1$ and $u_i>2$, the red arrows are $\iota_*\mbox{\tt{grad}}m(\phi(u_i,v_i))$ and the black arrows are $\widehat{\iota_*\mbox{\tt{grad}} m}(\phi(u_i,v_i))$.}\label{fig:gradient}
\end{figure}

\section{Implications to Manifold Learning}\label{diffusionmap}
Another branch of approaches to high-dimensional, massive data analysis are the graph based algorithms such as locally linear embedding (LLE) \cite{Roweis_Saul:2000}, ISOMAP \cite{Tenenbaum_deSilva_Langford:2000}, Hessian LLE \cite{donoho_grimes:2003}, the Laplacian eigenmap \cite{belkin_niyogi:2003}, local tangent space alignment \cite{Zhang_Zha:2004}, diffusion maps \cite{coifman_lafon:2006}, and vector diffusion maps \cite{vdm}. In addition to preserving the nonlinearity of the data structure, one advantage of these approaches is their adaptivity to the data, that is, the model imposed on the data is relatively weakened so that the information revealed from the analysis is less distorted by model mis-specification. These advantages render the graph based algorithms attractive and popular in data analysis. 
When the data are assumed to be sampled from a compact and smooth $d$-dimensional manifold $\MM$, the key step of these methods is the learning of the intrinsic geometric quantities, for example, the Hessian operator \cite{donoho_grimes:2003}, the Laplace-Beltrami operator \cite{belkin_niyogi:2003,coifman_lafon:2006} or the connection Laplacian \cite{vdm}. What we are concerned with in this section is the estimation of the Laplace-Beltrami operator $\Delta$ of $\MM$, considered in the diffusion map framework \cite{coifman_lafon:2006}, via MALLER. 
We refer the readers to these literature for further discussions and references. 
Throughout this section, we make use of the same assumptions and notation as in Sections 
\ref{algorithm} and \ref{theory}. 

We start with discussing the relationship between the diffusion map framework and generalizing the Nadaraya-Watson kernel regression method to the manifold setup. 
Suppose $\MM$ is compact, smooth and without boundary. Fix a bandwidth $h>0$. First we define a $n\times n$ weight matrix $W$ and a $n\times n$ diagonal matrix $D$ by
\begin{equation}\label{W0}
W(i,j)=K\left(\frac{\|X_i-X_j\|_{\RR^p}}{\sqrt{h}}\right)\quad\mbox{and}\quad
D(i,i) = \sum_{j=1}^{n} W(i,j).
\end{equation}
Then $A:=D^{-1}W$ can be interpreted as a Markov transition matrix of a discrete random walk over the sample points $\{X_i\}_{i=1}^n$, where the transition probability in a single step from the sample point $X_i$ to the sample point $X_j$ is given by $A(i,j)$. 

Note that $A$ can be used to generalize the Nadaraya-Watson kernel method originally defined for nonparametric regression on $\RR^p$ to the manifold $\MM$ setup. Indeed, given the regression model (\ref{model1}), define this generalized Nadaraya-Watson estimator $\hat{m}_{NW}$ of $m$ at $X_i$ as
\[
\hat{m}_{NW}(X_i,h) := (A\vY)(i)=\frac{\sum_{j=1}^nK\left(\frac{\|X_i-X_j\|_{\RR^p}}{\sqrt{h}}\right)Y_j}{\sum_{j=1}^nK\left(\frac{\|X_i-X_j\|_{\RR^p}}{\sqrt{h}}\right)}, \, i=1,\ldots,n,
\]
i.e. take $A$ as the smoothing matrix of $\hat{m}_{NW}(\cdot, h)$. 
Clearly the conditional expectation of the estimator $\hat{m}_{NW}(X_i,h)$ becomes
\begin{eqnarray}
&&\EE\big\{\hat{m}_{NW}(X_i,h)\big|\mathcal{X}\big\}
= (A\vm)(i)=\frac{\sum_{j=1}^nK\left(\frac{\|X_i-X_j\|_{\RR^p}}{\sqrt{h}}\right)m({X}_j)}{\sum_{j=1}^nK\left(\frac{\|X_i-X_j\|_{\RR^p}}{\sqrt{h}}\right)},\label{NWconvergence}
\end{eqnarray}
where $\vm$ is defined in (\ref{def:Yandm}). When $m\in C^3(\MM)$ and ${X}_i\notin \MM_{\sqrt{h}}$, the asymptotic expansion of (\ref{NWconvergence}) has been shown in \cite{coifman_lafon:2006,hein_audibert_luxburg:2005, singer:2006}. Indeed, we have, as $n\to \infty$,
\begin{eqnarray}
(A\vm)(i)= m({X}_i)+h\frac{\mu_{1,2}}{2d}\bigg(\Delta m({X}_i)+2 \frac{m({X}_i)\Delta f({X}_i)}{f({X}_i)}\bigg)+ O(h^2)+O_p\Big(\frac{1}{n^{\frac{1}{2}}h^{\frac{d}{4}-\frac{1}{2}}}\Big).\nonumber
\end{eqnarray}
Note that in \cite{coifman_lafon:2006} the kernel is normalized so that $\mu_{1,0}=1$ and $\mu_{1,2}/d=2$.
When $f$ is constant, the second order conditional bias term contains information about the Laplace-Beltrami operator of $(\MM,g)$. This fact, however, is in general ignored when the focus is the nonparametric regression problem. On the contrary, since knowledge of the Laplace-Beltrami operator leads to abundant information about the manifold, in \cite{coifman_lafon:2006} the matrix $L_0:=h^{-1}(D^{-1}W-I_n)$ and its relationship with the Laplace-Beltrami operator are extensively studied, and the eigenvectors of $A$ are used to define the diffusion map. 
When $f$ is not constant, 
the $f$-dependence is removed by the following normalization \cite{coifman_lafon:2006}. Define a $n\times n$ weight matrix $W_1$ and a $n\times n$ diagonal matrix $D_1$ by
\begin{equation}
\label{Walpha}
W_1 = D^{-1}WD^{-1},\quad\mbox{and}\quad D_1(i,i) = \sum_{j=1}^n W_1(i,j)
\end{equation}
where $W$ and $D$ are defined in (\ref{W0}),
and
\[
L_1=h^{-1}\big(D_1^{-1}W_1-I_n\big).
\]
When $n\to\infty$, it is shown in \cite{coifman_lafon:2006} that for any $m\in C^3(\MM)$ the matrix $L_1$ satisfies the following convergence:
\begin{equation}\label{remark:dm:L1m}
(L_{1}\vm)(i) = \frac{\mu_{1,2}}{2d}\Delta m(X_i) + O(h) + O_p\Big(\frac{1}{n^{1/2}h^{d/4+1/2}}\Big).
\end{equation}
Notice that the effect of the normalization (\ref{Walpha}) is actually to cancel out the effect of the non-uniformality in $f$ on the matrix $L_0$. We remark that the matrix $D_1^{-1}W_1$ can thus be used as the smoothing matrix of a new estimator of $m$  which is design adaptive.  

If we view the Nadaraya-Watson kernel method on $\RR^p$ as the local zero-order polynomial regression, the LLR on $\RR^p$ can be viewed as the first-order companion of the Nadaraya-Watson kernel method which takes the local slope into account \cite{ruppert_wand:1994}. We discuss extensively  its generalization to the regression on manifold setup in Section \ref{algorithm}, its large sample behaviors in Section \ref{theory}, and its numerical results are demonstrated in Section \ref{numerics}. Recall that the conditional bias of MALLER, given in (\ref{rslt:interior:mfd}) of the Supplementary, depends on the Laplace-Beltrami operator:
\[
\EE\{\hat{m}(X,h)-m(X)| \mathcal{X}\}=h\frac{\mu_{1,2}}{2d}\Delta m(X)+O(h^2+h\hpca^{3/4})+O_p\Big(\frac{1}{n^{1/2}h^{d/4-1}}\Big).
\]
This fact leads us to build up an alternative matrix to approximate the Laplace-Beltrami operator.
Fix $h>0$ and consider the following $n\times n$ matrix
\begin{equation}\label{section5:Ap}
A_p=\left[\begin{array}{c}
\vv_1^T(\XX^T_{X_1}\WW_{X_1}\XX_{X_1})^{-1}\XX_{X_1}^T\WW_{X_1}\\
\vdots\\
\vv_1^T(\XX^T_{X_n}\WW_{X_n}\XX_{X_n})^{-1}\XX_{X_n}^T\WW_{X_n}
\end{array}
\right],
\end{equation}
where the $i$-th entry is defined by (\ref{design}), (\ref{weighted}), and (\ref{algo:estimator:mhat}). Note that $A_p$ is the smoothing matrix of MALLER, that is, 
$A_p\vY=\big(\hat{m}(X_1,h),\ldots,\hat{m}(X_n,h)\big)^T$ from  (\ref{algo:estimator:mhat}).
Using this smoothing matrix and defining 
\[
L_p = h^{-1}\big(A_p-I_{n}\big),
\] 
for any $m\in C^3(\MM)$, we directly have
\begin{equation}\label{section5:newDelta}
(L_p\vm)(i)=\frac{\mu_{1,2}}{2d}\Delta m(X_i)+O(h+\hpca^{3/4})+O_p\Big(\frac{1}{n^{1/2}h^{d/4}}\Big).
\end{equation}
Thus the matrix $L_p$ can be used to construct an estimator of the Laplace-Beltrami operator $\Delta$. Notice that we do not need an extra step to handle the non-constant p.d.f. issue here because the design adaptive property of $\hat{m}(X,h)$ ensures that the leading term in the right-hand side of (\ref{section5:newDelta}) is independent of $f$. With the estimator $L_p$ of $\Delta$, massive data analysis can be carried out in the same way as those in the diffusion map framework if the manifold assumption is reasonable. We remark that the knowledge of the non-constant p.d.f. is useful in some problems. For example, in \cite{coifman_lafon:2006,nadler_lafon_coifman:2006} the authors showed a strong connection between the non-constant p.d.f. with the Fokker-Plank operator, which is useful in the low-dimensional representation of stochastic systems.   

In Figure \ref{fig:Sk:ev}, some numerical results of estimating the $\Delta$ of $\MM$ by this new method are demonstrated. We sampled $1000$, $2000$ and $4000$ points uniformly from the $S^2$, $S^3$ and $S^4$ embedded in $\RR^{3}$, $\RR^{4}$ and $\RR^{5}$ respectively, and built the matrix $L_p$ from the sample points with $h=0.1$. It is a well known fact that the $l$-th eigenvalue of the Laplace-Beltrami operator of $S^k$ is $-l(l+k-1)$ with multiplicity ${k+l \choose k}-{k+l-2 \choose k}$, where ${ \cdot \choose \cdot}$ is the binomial coefficient. The results in Figure \ref{fig:Sk:ev} show that the new estimator for the Laplace-Beltrami operator agrees with this well known fact numerically.
\begin{figure}[h]	
\begin{center}
\subfigure{
\includegraphics[width=0.31\textwidth]{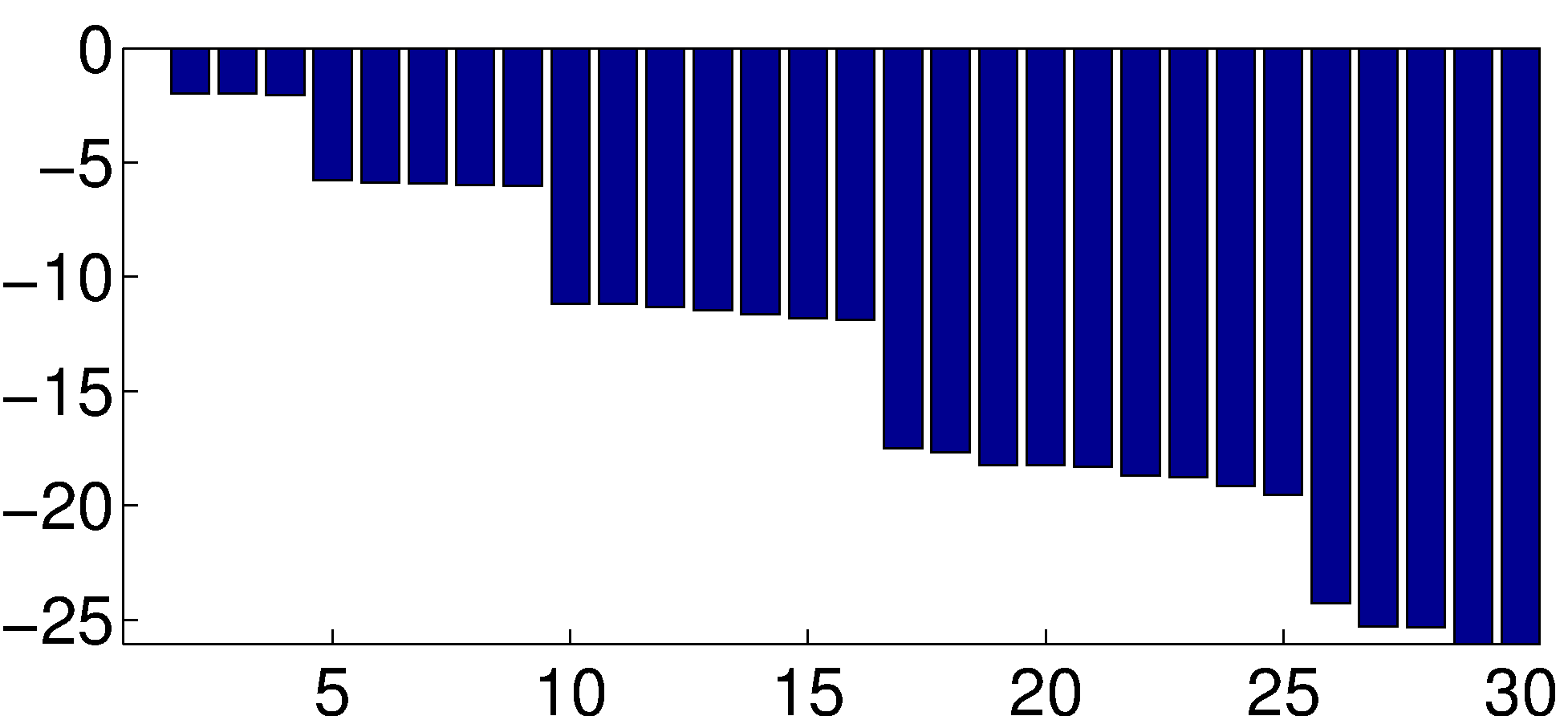}}
\subfigure{
\includegraphics[width=0.31\textwidth]{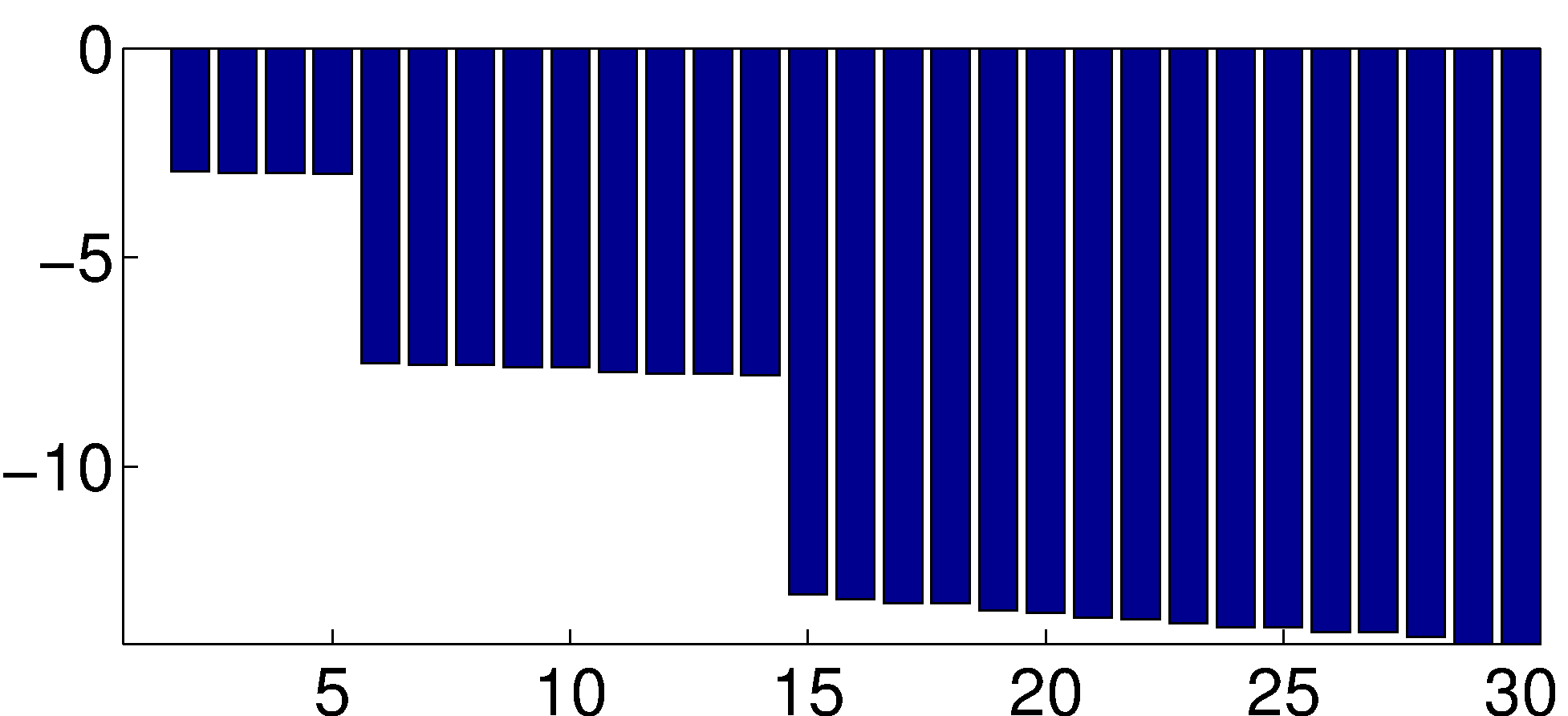}}
\subfigure{
\includegraphics[width=0.31\textwidth]{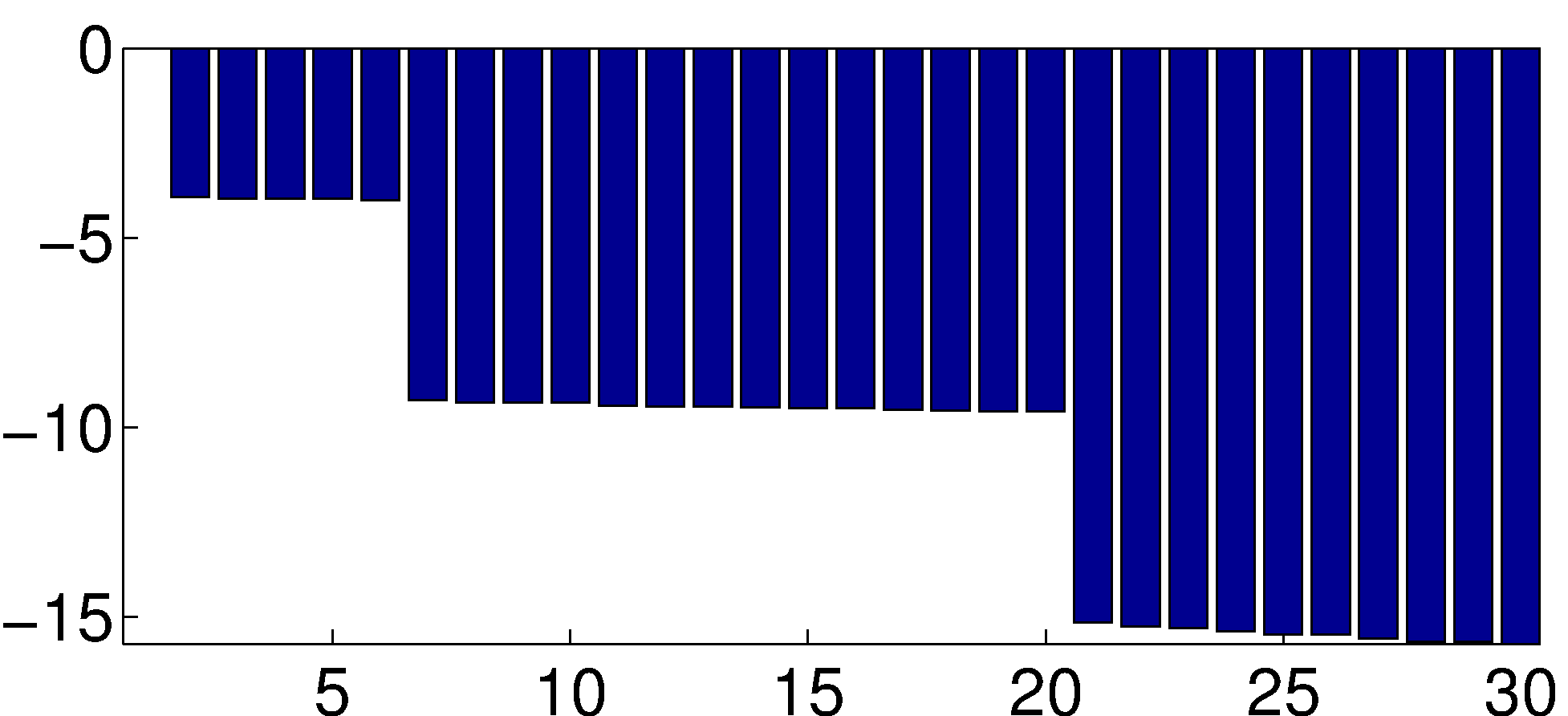}}
\end{center}
\vspace{-0.6cm}
\caption{\small From left to right: bar plots of the first $30$ eigenvalues of $L_p$ when the data points were sampled uniformly from $S^2$, $S^3$ and $S^4$. Note that the first few eigenvalues of $\Delta$ are $0,-2,-6,-12$ for $S^2$, $0,-3,-8,-14$ for $S^3$ and $0,-4,-10,-18$ for $S^4$, and the multiplicities of the first few eigenvalues of $\Delta$ are $1,3,5,7$ for $S^2$, $1,4,9,16$ for $S^3$ and $1,5,14,30$ for $S^4$. This fact is well resembled by the corresponding spectrum of $L_p$.}\label{fig:Sk:ev}
\vspace{-0.1cm}
\end{figure}

Up to now there are two ways to estimate the Laplace-Beltrami operator: one is based on generalizing the Nadaraya-Watson kernel method to the manifold setup as suggested by (\ref{remark:dm:L1m}) and studied in \cite{coifman_lafon:2006}, and the other is based on MALLER, which generalizes the LLR to the manifold setup, as suggested by (\ref{section5:newDelta}). The difference between these two approaches is most obvious when the manifold has smooth boundary. 

Suppose $\MM$ is compact, smooth and its boundary $\partial\MM$ is non-empty and smooth. When $X_i\in \MM_{\sqrt{h}}$, the asymptotic behavior of $D_1^{-1}W_1$ has been shown in the proof of Proposition 10 of \cite{coifman_lafon:2006}:
\begin{equation}\label{dm:bdry:blowup}
(D^{-1}_1W_1\vm)(i)=m(X_0)+\sqrt{h}C_1\partial_\nu m(X_0)+O(h)+O_p\Big(\frac{1}{n^{1/2}h^{d/4-1/2}}\Big),
\end{equation}
where $C_1=O(1)$, $X_0\in\partial\MM$ is the point on the boundary $\partial\MM$ closest to $X_i$, and $\nu$ is the normal direction at $X_0$. If the $\sqrt{h}$-order term is non-zero, the estimator $(L_{1}\vm)(i)$ in (\ref{remark:dm:L1m}) blows up when $h\to 0$. To avoid this blowup and to get an estimate of the Laplace-Beltrami operator on $\MM$, the Neuman's boundary condition $\frac{\partial m}{\partial\nu}=0$ is necessary. 
Thus, solving the eigenvalue problem of $L_1$ is a discrete approximation to solving the eigenvalue problem of the Laplace-Beltrami operator with the Neuman's boundary condition.

The situation is totally different for the proposed estimator $L_p$. The asymptotic behavior of the conditional bias of MALLER at $X_i\in \MM_{\sqrt{h}}$ provided in Corollary \ref{col:smoothbdry}
leads to
\begin{equation}\label{manifold_learning:new_estimator}
(L_p\vm)(i)=\frac{1}{2}\sum_{k=1}^dc_k(X_i)\nabla^2_{\partial_k,\partial_k}m(X_i)+O_p(h^{-1/2}\hpca^{3/4}+\hpca^{1/2})+O_p\Big(\frac{1}{n^{1/2}h^{d/4}}\Big).
\end{equation}
Thus, we know that when ${X}_i$ is near the boundary, the estimator $L_p$ does not blow up when $h\to 0$, and a different boundary condition can be imposed. 

{
Notice that the importance of using different bandwidths in the tangent plane estimation and in the LLR on the tangent plane becomes clear from (\ref{section5:newDelta}) and (\ref{manifold_learning:new_estimator}). Indeed, if we take $\hpca<h$ then it follows from (\ref{section5:newDelta}) (resp. (\ref{manifold_learning:new_estimator})) that the first order error of the estimator for the Laplace-Beltrami operator inside the manifold is smaller than the order $h^{3/4}$ (resp. $h^{1/4}$).}

In Figure \ref{fig:section5:bdry}, we demonstrate the eigenvectors of the estimator $L_p$ for the Laplace-Beltrami operator of a manifold with boundary. Specifically, we sampled $2000$ points $\{X_l\}_{l=1}^{2000}$ uniformly from the interval $[0,1]$ embedded in $\RR$, and evaluated the eigenvectors of $L_p$ built on $\{X_l\}_{l=1}^{2000}$. Notice that the eigenvectors shown in Figure \ref{fig:section5:bdry} can not happen, except for the first one, if the Laplace-Beltrami operator satisfies the Neuman's condition. The survey of the boundary condition suitable for the estimator $L_p$ is out of the scope of this paper, and we leave it as a future work.
\begin{figure}[h]	
\begin{center}
\subfigure{
\includegraphics[width=0.95\textwidth]{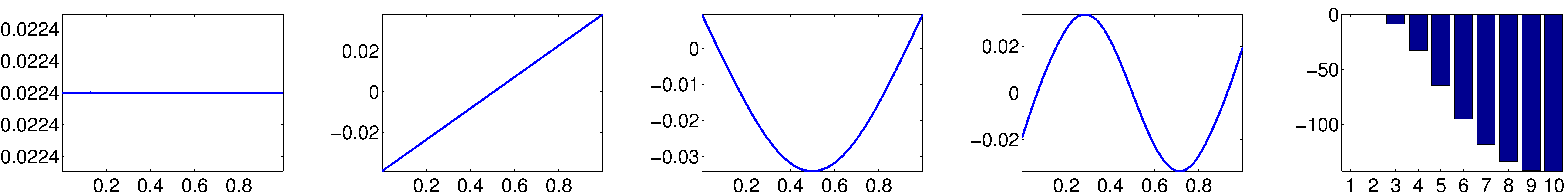}
}
\end{center}
\caption{\small From left to right: the first four eigenvectors of $L_p$ and the first $10$ eigenvalues of $L_p$ when sampling from $[0,1]$. The first two eigenvalues are zero. Notice that the second, third and fourth eigenvectors can not happen if the Laplace-Beltrami operator satisfies the Neuman's condition.}\label{fig:section5:bdry}
\end{figure}
\bigskip

\section{Discussions} \label{discussion}
When the $p$-dimensional predictor vector $X$ has some $d$-dimensional manifold structure, we obtain MALLER by constructing the traditional LLR on the estimated embedded tangent plane, which is of dimension $d$ instead of $p$. Consequently, both the estimation accuracy and computational speed depend only on $d$ but not on $p$. Keeping $p, d, n$ as fixed numbers, this feature is particularly advantageous when $d\ll n< p$, as is shown in the Isomap face database example in the numerical section.  
{
We mention that MALLER works in this case hinges on the capability of estimating the tangent plane. Since our model is noise free in the predictors, this capability can be explained by the theoretical findings in \cite{kaslovsky_meyer:2011} and \cite{nadler:2008}.} 
In \cite{nadler:2008}, the spike model is studied and the recovery of the subspace spanned by the response vectors is guaranteed even if $p\geq n$, when there is no noise \cite[(2.13)]{nadler:2008}. Under the manifold setup, locally the manifold model behaves like the Euclidean space, so it is expected to have similar results as those in \cite{nadler:2008}, which is shown in \cite{kaslovsky_meyer:2011}.
Furthermore, we emphasize that, while in \cite{aswani_bickel:2011} this case is modeled as the large $p$ small $n$ problem, where $p$ grows with $n$, and sparsity conditions and thresholding are employed, here we treat $p$ as a fixed number and take the fact that $n$ is larger than $d$.

\subsection{The Relationship with NEDE}
MALLER is not the first LLR regression scheme proposed to adapt to the manifold structure. NEDE, given in \cite{aswani_bickel:2011}, is a manifold-adaptive LLR constructed in the $p$-dimensional ambient space with regularization imposed on the directions perpendicular to the estimated embedded tangent plane.
At the first glance MALLER seems to be a special case of NEDE \cite[(4.6)]{aswani_bickel:2011} by taking $\lambda_n=\infty$ in \cite[(4.6)]{aswani_bickel:2011}. However, there are several distinct differences between the two methods. In this section we follow the notation used in \cite{aswani_bickel:2011}.

First, when $\lambda_n=\infty$ for all $n$
, although $\tilde{\beta}$ in \cite[(4.6)]{aswani_bickel:2011} is forced to be located on the estimated embedded tangent plane, the NEDE algorithm still runs in the ambient space and the minimization problem in \cite[(4.6)]{aswani_bickel:2011} becomes ill-posed. Indeed,  the solution in \cite[(4.6)]{aswani_bickel:2011} depends on the inverse of the matrix $\hat{C}_n+\lambda_n\hat{P}_n/nh^{d+2}$, which is unstable to solve when $\lambda_n=\infty$. This numerical instability of NEDE when $\lambda_n=\infty$ can also be shown numerically. 
As an illustration, we ran NEDE with $\lambda_n=e^{100}$  (within the machine precision)  on the Isomap face database with the images downsized to $7\times 7$ pixels. Then, it happened that the optimal value of $d$ chosen by the NEDE algorithm was close to $49=7\times 7 =p$ ($48.325\pm 1.3019$ over $100$ replications) due to the degeneracy of $\hat{C}_n+\lambda_n\hat{P}_n/nh^{d+2}$, and the final RASE was $12.3684 \pm 6.1161$ (over $100$ replications), which is roughly ten times of the RASE of MALLER. Even when we set $d=3$ and $\lambda_n=e^{100}$ in the NEDE algorithm and tested it on the same $7\times 7$-pixel images, the final RASE was still $10.5829 \pm 6.0986$ after $100$ replications.

Second, even if NEDE \cite[(4.6)]{aswani_bickel:2011} is stable to solve when $\lambda_n=\infty$, the bandwidth selection problem in NEDE still depends on $p$, which leads to different results compared with MALLER. Specifically, the selected bandwidth would be larger and hence the bias is increased. 

Third, in NEDE the bandwidth used in the tangent plane estimation is taken to be the same as the one used in the LLR estimation, while in MALLER we estimate the tangent plane using a different bandwidth $h_{\text{pca}}$ which by the asymptotic analysis should be taken to be smaller than the bandwidth $h$ in the LLR step. Thus, the tangent plane estimate obtained by NEDE is different from that obtained by MALLER. Since this estimation error does not contribute to the leading bias term, the difference is not significant in the regression problem. However, if we would like to have a better estimator of the Laplace-Beltrami operator, this error becomes significant, as is shown in Section \ref{diffusionmap}.  
 
In conclusion, MALLER is different from NEDE even if the parameter $\lambda_n$ in NEDE is set to $\infty$, both theoretically and numerically. And, the key features that render the two algortihms different are those mentioned above, not the more sophisticated method MALLER uses to  select the bandwidth in the LLR.
%
%
%
%
%
%
%
%
%
%
%
%
%
%
%
%
%
%
%
%
%
%
%
%
%
%
%
%
%
%

\subsection{Future Directions}

To sum up this paper, here are several issues left open and are of interest for future research:
\begin{enumerate}
\item Like in any smoothing methods, bandwidth selection is crucial for the proposed MALLER. 
Our bandwidth selection procedure is built on balancing between estimates of the conditional bias and variance. 
Although this approach worked well in our numerical studies, there is still room for improvement. 
\item 
We include in our algorithm a clustering tool to alleviate numerical problems caused by the condition number, without having to estimate the condition number. This is not the ultimate solution; instead, the ideal solution is to estimate the condition number, and then use that information in the subsequent steps. 
\item In this paper we consider the case where the predictor vector 
is directly observable. In some situations, the predictor vector itself is subject to noise, and 
the tangent plane and regression estimation steps has to be adjusted accordingly. 
This is closely related to the deconvolution and measurement error problems in the literature, in the Euclidean setup. 
\item In MALLER, the dimensionality is reduced to the intrinsic structure of the predictors. The dimensionality may be further reduced by taking into account the relationship between the response and the predictors \cite{xia:2007, xia:2008}.
\item 
The smoothing matrix of MALLER is shown to be useful for estimating the Laplace-Beltrami operator with the boundary condition different from Neuman's condition, it is worthwhile to investigate further such a new set of tools for manifold learning.
\item In applications, the response itself may be multivariate 
as well. The case when the responses are positive-definite matrices and the predictor vector is non-degenrated in $\RR^p$ was considered by \cite{zhu:2009}. It is interesting to investigate the case when both the response and the predictor vector have manifold structures.
\end{enumerate}


\bibliographystyle{plain}
\bibliography{lpoly_mfds_submit}

\newpage

\centerline{\bf {\Large Supplementary Materials for ``Local Linear Regression}}
\centerline{\bf {\Large  on Manifolds  and its Geometric Interpretation''}}

\centerline{\large by Ming-Yen Cheng, and  Hau-Tieng Wu}

\setcounter{equation}{0}
\setcounter{section}{0}
\setcounter{page}{1}
\renewcommand{\theequation}{A.\arabic{equation}}
\renewcommand{\thesection}{A.\arabic{section}}

\section{Exterior derivative, covariant derivative and gradient}\label{appendix:background}

In this appendix we provide the required differential geometry background about the covariant derivative, gradient, exterior derivative and their relationships. We refer the readers to \cite{doCarmo:1992} for more details. 

We start from recalling the definition of the gradient vector field of a given function defined on the Euclidean space. Given $m:\RR^d\rightarrow \RR$, the gradient vector field or the total differentiation, denoted as $\nabla m$ is defined as
\[
\nabla m:=\left(\frac{\partial m}{\partial x_1},\ldots,\frac{\partial m}{\partial x_d}\right)
\]
so that for $v\in\RR^d$ we have the directional derivative
\begin{equation}\label{app:background:Euclidean:1}
\nabla_vm(x):=(\nabla m)(v):=\lim_{t\rightarrow 0}\frac{m(x+tv)-m(x)}{t}.
\end{equation}
Often we use another notation to represent the directional derivative:
\begin{equation}\label{app:background:Euclidean:2}
\langle \nabla m(x), v\rangle:=\nabla_vm(x)
\end{equation}
This definition, however, can not be generalized to the manifold setup directly. Indeed, the quantity $x+tv$ in (\ref{app:background:Euclidean:1}) does not make sense in general. To obtain a suitable notion of differentiation, we consider the following definitions. Fix a differentiable $d$-dim manifold $\MM$ and a $C^1$ function $m:M\to\RR$. For a given differentiable vector field $V$, locally around $x\in\MM$ we can find a curve $c(t)$ so that $c(0)=x\in\MM$ and $c'(0)=V_x$, the value of $V$ at $x$ so that $V$ acts on $m$ at $x$ by
\begin{equation}\label{app:background:M:Xf}
Vm(x):=\frac{\ud m(c(t))}{\ud t}\Big|_{t=0}.
\end{equation}
The exterior derivative of $m$, denoted as $\ud m$ at $x$ is defined as:
\begin{equation}\label{app:background:M:exterior}
((\ud m)V)(x):=\langle (\ud m)_x,V_x\rangle:=Vm(x),
\end{equation}
where $\langle\cdot,\cdot\rangle$ means that the first entry is the dual of the second entry.
We can thus view the exterior derivative of $m$ as a 1-form, which maps a given vector field into a scalar valued function.
Next we define the covariant derivative of $m$, denoted as $\nabla m$. Fixed a $C^1$ curve $c(t)$ on $\MM$ so that $c(0)=x$. The covariant derivative of $m$ in the direction of $c'(0)$ is defined as
\[
\nabla_{c'(0)}m:=\lim_{t\to 0}\frac{P_{c(0),c(t)}m(c(t))-m(c(0))}{t},
\]
where $P_{c(0),c(t)}$ is the parallel transport of the trivial scalar bundle. Since $P_{c(0),c(t)}$ is trivial, the covariant derivative of $m$ in the direction of $c'(0)$ is reduced to
\begin{equation}\label{app:background:M:cov}
\nabla_{c'(0)}m=\lim_{t\to 0}\frac{m(c(t))-m(c(0))}{t}=\frac{\ud m(c(t))}{\ud t}=Vm(x).
\end{equation}
Thus $\nabla m$ 1s a 1-form, which maps a given vector field to a scalar value. 
If $\MM$ is Riemannian, that is, $\MM$ is endowed with a Riemannian metric $g$, we can further define the gradient of $m$, which is a vector field denoted as $\mbox{\tt{grad}}m$, as:
\begin{equation}\label{app:background:M:gradient}
g(\mbox{\tt{grad}}m(x), V_x)=\langle (\ud m)_x,V_x\rangle.
\end{equation} 

It is clear from (\ref{app:background:M:exterior}) and (\ref{app:background:M:cov}) that for a given differentiable function $m$, its exterior derivative and covariant derivative are the same. Notice that from (\ref{app:background:Euclidean:1}) and (\ref{app:background:M:cov}), the covariant derivative of $m$ defined on $\MM$ is a natural generalization of the total derivative of $m$ defined on the Euclidean space. In other words, the total derivative of $m$ defined on the Euclidean space should be viewed as a 1-form. The gradient defined in (\ref{app:background:M:gradient}) is directly related to the covariant derivative via the metric $g$. This definition is exactly the same as that in (\ref{app:background:Euclidean:2}) since in the Euclidean space, the metric $g$ in the local coordinate $\{\partial_i\}_{i=1}^d$ around $x$ is nothing but $\big(g_{ij}\big)_{1\leq i,j\leq d}=I_d$, where $g_{ij}:=g(\partial_i,\partial_j)$. In other words, if we view the Euclidean space as a manifold with the canonical metric, we can either view the total differentiation as a 1-form, the covariant derivative (\ref{app:background:Euclidean:1}), or as a vector field, the gradient (\ref{app:background:Euclidean:2}); but in the manifold setup, these two notions are not exactly the same but related by the chosen metric $g$ as in (\ref{app:background:M:gradient}).

With the above definitions and clarifications, for a fixed local coordinate around $x$, we have 
\begin{equation}\label{app:background:M:gradient:coor}
\mbox{\tt{grad}}m=\sum_{i,j=1}^d g^{ij}\partial_im\partial_j,
\end{equation}
where $\{\partial_l\}_{l=1}^d$ is the coordinate around $x$, $\partial_im$ is defined by (\ref{app:background:M:Xf}) and $\big(g^{ij}\big)_{1\leq i,j\leq d}$ is the inverse of $\big(g_{jk}\big)_{1\leq i,j\leq d}$,
while the covariant derivative of $m$ is 
\[
\ud m=\nabla m=\sum_{l=1}^d\partial_lm\ud x^l,
\] 
where $\{\ud x^l\}_{l=1}^d$ is the dual of $\{\partial_l\}_{l=1}^d$. Thus, if we choose a normal coordinate around $x$ so that $g_{ij}=\delta_{ij}$ at $x$, where $\delta_{ij}$ denotes the kronecker delta, the coefficients of the covariant derivative of $m$ at $x$ is the same as the coefficients of the gradient of $m$ at $x$. Note that $\mbox{\tt{grad}}m(x)$ (or $\ud m(x)$) is the same regardless the choice of the local basis.

Notice that as is stated in Theorem \ref{thm:interior:cov} and \ref{thm:boundary:cov}, the estimated first order covariant derivative of $m$, $\widehat{\nabla_{\partial_i}m}({x},h)$, depends on the estimated basis of $\iota_*T_{{x}}\MM$. Thus, we have to take this basis into account to estimate the embedded gradient of $m$, $\iota_*\nabla m({x})$, as is considered in (\ref{algo:estimator:CovDeri}). Notice that since MALLER provides the estimate of $\nabla_{\partial_l}m$ at $x$ for $l=1,\ldots,d$, we can get the estimate of the covariant derivative or the exterior derivative of $m$ by taking the dual basis of $\{\partial_l\}_{l=1}^d$ into consideration.
 
We demonstrate the detailed calculation of the gradient given in (\ref{simulation:torus:gradient}). Since $\phi(u,v)=((2+\cos(v))\cos(u),(2+\cos(v))\sin(u),\sin(v))$, It is clear that
\[
\ud \phi=\left[
\begin{array}{cc}
-(2+\cos(v))\sin(u) & -\sin(v)\cos(u)\\
(2+\cos(v))\cos(u) & -\sin(v)\sin(u)\\
0 & \cos(v)
\end{array}
\right].
\]
By denoting $e_1=(1,0)\in\RR^2$ and $e_2=(0,1)\in\RR^2$, we get a set of embedded vector fields defined on $\phi([0,2\pi)\times[0,2\pi))$:
\[
E_1=\frac{\ud \phi(e_1)}{\|\ud \phi(e_1)\|}=(-\sin(u),\cos(u),0)
\]
and
\[
E_2=\frac{\ud \phi(e_2)}{\|\ud \phi(e_2)\|}=(-\sin(v)\cos(u),-\sin(v)\sin(u),\cos(v)),
\] 
which are orthonormal with related to the canonical metric of $\RR^3$. Since $\iota$ is an isometric embedding of the torus into $\RR^3$, $E_i=\iota_*\partial_i$, $i=1,2$, where $\partial_i$ is an orthonormal frame defined on the torus. Thus, by (\ref{app:background:M:gradient:coor}) the embedded gradient of $m$ at $\iota(x)$ can be evaluated by
\begin{equation}\label{app:background:M:gradient:example}
\iota_*(\mbox{\tt{grad}}m(x))=\partial_1m(x)\iota_*\partial_1(x)+\partial_2m(x)\iota_*\partial_2(x)=\partial_1m(x)E_1(x)+\partial_2m(x)E_2(x),
\end{equation}
where $\partial_i(x)$ is the value of $\partial_i$ at $x$. By definition, we have
\begin{eqnarray}
&&\partial_1m(x)=\frac{\ud m(c_1(t))}{\ud t}|_{t=0}=\frac{\ud m(\phi(u+\frac{t}{2+\cos(v)},v))}{\ud t}=\frac{-\sin(u)\sin(4v+1)}{2+\cos(v)}\nonumber\\
&&\partial_2m(x)=\frac{\ud m(c_2(t))}{\ud t}|_{t=0}=\frac{\ud m(\phi(u,v+t))}{\ud t}=4\cos(u)\cos(4v+1)\nonumber
\end{eqnarray}
where $\iota(x)=\phi(u,v)$, $c_i(0)=x$ and $c_i'(0)=\partial_i(x)$ for $i=1,2$. Note that $\ud\phi(e_1)$ is not of unit norm, so we have to normalize $e_1$ by $2+\cos(v)$ when we evaluate $\partial_1m(x)$. Plugging the above into (\ref{app:background:M:gradient:example}), we get (\ref{simulation:torus:gradient}).

\section{Proofs}

The following lemmas are needed to finish the proofs of the theoretical results. The proofs of the first three lemmas can be found in \cite{vdm}. The first lemma describes how the volume form depends on the curvature.
The second lemma describes how to express the relationship between two points on the manifold $\MM$ after being embedded in $\RR^p$. Recall that the notion of ``subtraction'' between two points on $\MM$ is not well defined. However, once these two points are embedded to $\RR^p$, the notion of ``subtraction'' makes sense, and the result of subtraction can be expressed by some geometric quantities of $\MM$ and the embedding itself. The third lemma describes the error when we try to estimate the geodesic distance between two close points on $\MM$  by the Euclidean distance between their embedded points. Notice that in practice the geodesic distance between two close points on $\MM$ is unknown a priori, and we can only estimate it by the Euclidean distance between their embedded points.
\begin{lem}\label{lemma1}
In polar coordinates around ${x}\in\MM$, the volume form $\ud V$ is
\[
\ud V(\exp_{{x}}t\theta)=\big(t^{d-1}+t^{d+1}\Ric(\theta,\theta)+O(t^{d+2})\big)\ud t\ud\theta,
\]
where $\theta\in T_{{x}}\MM$, $\|\theta\|=1$ and $t>0$.
\end{lem}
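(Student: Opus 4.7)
The plan is to derive the expansion via the Jacobi field formula for the metric in normal coordinates centered at $x$. First I would set up normal coordinates $(y^1,\ldots,y^d)$ at $x$ so that $\exp_x$ sends the Euclidean straight line $t\mapsto ty$ to a geodesic, $g_{ij}(x)=\delta_{ij}$, and the Christoffel symbols vanish at $x$. In these coordinates the radial direction is $\partial/\partial t$ and the transverse directions are Jacobi fields along the radial geodesic with initial value $0$ and initial derivative an orthonormal basis of the unit sphere $S^{d-1}\subset T_x\MM$; this is the standard Gauss-lemma setup used to decompose $\ud V$ in polar form.

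Next I would Taylor-expand the coefficients of $g$ along a radial geodesic $\gamma(t)=\exp_x(t\theta)$ using the Jacobi equation $J'' + R(J,\gamma')\gamma' = 0$ together with the initial conditions for the coordinate Jacobi fields. Iterating the ODE twice, one obtains the classical expansion $g_{ij}(\exp_x(t\theta)) = \delta_{ij} + c\, t^2 R_{ikjl}(x)\theta^k\theta^l + O(t^3)$ for an explicit constant $c$, and the remainder is uniform in $\theta\in S^{d-1}$ because $\MM$ is compact and smooth. Taking determinants and expanding $\sqrt{\det g_{ij}}$ in powers of $t$ contracts the Riemann tensor to the Ricci tensor, giving $\sqrt{\det g}(\exp_x(t\theta)) = 1 + a\, t^2 \Ric(\theta,\theta) + O(t^3)$ for an explicit constant $a$.

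Finally I would convert to polar coordinates: writing $y=t\theta$ with $t=\|y\|$ and $\theta\in S^{d-1}$, the Lebesgue measure in the normal chart decomposes as $\ud y^1\cdots\ud y^d = t^{d-1}\,\ud t\,\ud\theta$. Multiplying by the $\sqrt{\det g}$ expansion from the previous step and collecting terms yields
\[
\ud V(\exp_x(t\theta)) = \bigl(t^{d-1} + t^{d+1}\Ric(\theta,\theta) + O(t^{d+2})\bigr)\,\ud t\,\ud\theta,
\]
as claimed (up to the normalization convention for the Ricci term absorbed into the statement). The routine but slightly delicate step is keeping the sign conventions and numerical constants straight when contracting the Riemann tensor and extracting the square root; the main obstacle for downstream use is ensuring that the $O(t^{d+2})$ remainder is uniform in $\theta$ and in $x$ ranging over $\MM$, which follows from the compactness of $\MM$ and smoothness of $g$ but must be recorded explicitly since later lemmas integrate this expansion in $x$.
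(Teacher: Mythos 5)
Your proposal is correct and takes the classical normal-coordinate/Jacobi-field route, which is essentially the same approach as in the reference (Singer and Wu's \emph{Vector Diffusion Maps}) to which the paper defers this lemma's proof. You are also right to flag the numerical coefficient on the Ricci term: with the standard conventions the expansion is $\sqrt{\det g}(\exp_x(t\theta)) = 1 - \tfrac{1}{6}t^2\Ric(\theta,\theta) + O(t^3)$, so the lemma as stated suppresses the factor $-\tfrac{1}{6}$, which is harmless since all downstream uses in the paper only invoke the $O(t^{d+1})$ bound on the correction term.
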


\begin{lem}\label{lemma2}
Fix ${x}\in\MM$ and denote by $\exp_{{x}}$ the exponential map at ${x}$. With the identification of $T_{\iota({x})}\RR^p$ with $\RR^p$, for $\theta\in T_{{x}}\MM$ with $\|\theta\|=1$ and $t\ll 1$, we have
\begin{equation}\label{relateexp1}
\iota\big(\exp_{{x}}t\theta\big)=\iota({x})+t\iota_*\theta+t^2\frac{\II_{{x}}(\theta,\theta)}{2}+O(t^3).
\end{equation}
\end{lem}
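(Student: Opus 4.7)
The plan is to Taylor expand the smooth curve $c(t) := \iota(\exp_{x} t\theta)$ in $\RR^p$ to second order around $t=0$ and identify the three coefficients geometrically. By Taylor's theorem in $\RR^p$ we have $c(t) = c(0) + t\,c'(0) + \tfrac{t^2}{2}c''(0) + O(t^3)$, so it suffices to compute $c(0)$, $c'(0)$ and $c''(0)$. The zeroth order term is $c(0)=\iota(x)$, and the first order term is $c'(0)=\iota_*\theta$ by the chain rule together with the defining property $\tfrac{\ud}{\ud t}\exp_{x}(t\theta)\big|_{t=0}=\theta$ of the exponential map.

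The main step is to identify $c''(0)$ with $\II_{x}(\theta,\theta)$. Here I would invoke the Gauss formula: if $\bar\nabla$ denotes the flat connection on $\RR^p$ and $\nabla$ is the Levi-Civita connection on $(\MM,g)$ induced by $\iota$, then for tangent vector fields $X,Y$ on $\MM$ one has $\bar\nabla_{X}(\iota_* Y) = \iota_*(\nabla_{X}Y) + \II(X,Y)$, with $\II(X,Y)$ normal to $\iota_*T\MM$. Applying this along the geodesic $\gamma(t):=\exp_{x}t\theta$ with $X=Y=\gamma'$, and using the geodesic equation $\nabla_{\gamma'}\gamma'=0$, yields $c''(t)=\bar\nabla_{c'(t)}c'(t)=\II_{\gamma(t)}(\gamma'(t),\gamma'(t))$. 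Evaluating at $t=0$ with $\gamma(0)=x$ and $\gamma'(0)=\theta$ gives $c''(0)=\II_{x}(\theta,\theta)$.

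The only remaining point is to justify the $O(t^3)$ remainder, which is a compactness issue: since $\MM$ is compact and both $\iota$ and $\exp$ are smooth, $\|c'''(s)\|$ is bounded uniformly in $x\in\MM$, $\theta\in T_{x}\MM$ with $\|\theta\|=1$ and $s$ in a small neighborhood of $0$, so the remainder in Taylor's theorem is genuinely $O(t^3)$ with constants independent of $x$ and $\theta$. I do not anticipate any substantive obstacle; the entire argument rests on the Gauss formula together with the geodesic condition, both standard facts from Riemannian geometry, and this same expansion will feed directly into the proof of Lemma \ref{lemma4} controlling the discrepancy between Euclidean and geodesic balls.
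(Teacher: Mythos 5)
Your argument is correct, and it is the standard one: expand $c(t)=\iota(\exp_x t\theta)$ to second order and use the Gauss formula together with the geodesic equation to identify $c''(0)$ with the second fundamental form. The paper itself does not give a proof of this lemma — it defers to reference \cite{vdm} — but the Gauss-formula derivation you give is precisely what is done there, and your uniformity remark (compactness of $\MM$ gives an $O(t^3)$ remainder with constants independent of $x$ and $\theta$) is a welcome, correct addition since the lemma is later applied with $x$ varying over $\MM$.
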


\begin{lem}\label{lemma3}
Suppose ${x},{y}\in \MM$ such that ${y}=\exp_{{x}}(t\theta)$, where $\theta\in T_{{x}}\MM$ and $\|\theta\|=1$. If $t\ll 1$, then $\tilde{t}=\|\iota({x})-\iota({y})\|_{\RR^p}\ll 1$ satisfies
\begin{equation}
t=\tilde{t}+\frac{1}{24}\|\II_{{x}}(\theta,\theta)\|\tilde{t}^3+O(\tilde{t}^4).
\end{equation}
\end{lem}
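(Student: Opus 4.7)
The plan is to Taylor-expand the embedded curve $\phi(t):=\iota(\exp_{x}(t\theta))$ one order beyond what Lemma~\ref{lemma2} provides, square it to get $\tilde t^2$, and then invert the resulting power series to solve for $t$ in terms of $\tilde t$. Concretely, since $\iota$ and $\exp_x$ are smooth, I can write
\begin{equation*}
\phi(t)=\iota(x)+t\,\iota_{*}\theta+\frac{t^{2}}{2}\II_{x}(\theta,\theta)+\frac{t^{3}}{6}\,a_{3}+O(t^{4}),
\end{equation*}
where $a_3:=\phi'''(0)\in\RR^{p}$ is some third derivative. I will not need an explicit formula for $a_3$; only one scalar quantity, namely $\iota_{*}\theta\cdot a_{3}$, will matter.

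The computation rests on two geometric facts that I can extract without knowing $a_3$. First, because the curve $t\mapsto \exp_{x}(t\theta)$ is a unit-speed geodesic and $\iota$ is an isometric embedding, $\|\phi'(t)\|^{2}\equiv 1$. Differentiating this identity once and evaluating at $t=0$ recovers the standard normal-bundle orthogonality $\iota_{*}\theta\cdot\II_{x}(\theta,\theta)=0$, while differentiating it twice and evaluating at $t=0$ yields the crucial identity $\iota_{*}\theta\cdot a_{3}=-\|\II_{x}(\theta,\theta)\|^{2}$. Second, $\|\iota_{*}\theta\|^{2}=1$ because $\iota$ is isometric. Squaring the expansion above and collecting terms, the $t^{2}$ coefficient is $1$, the $t^{3}$ coefficient vanishes by the orthogonality, and the $t^{4}$ coefficient simplifies to $\tfrac{1}{4}\|\II_{x}(\theta,\theta)\|^{2}+\tfrac{1}{3}(\iota_{*}\theta\cdot a_{3})=-\tfrac{1}{12}\|\II_{x}(\theta,\theta)\|^{2}$. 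Thus
\begin{equation*}
\tilde t^{2}=t^{2}-\frac{t^{4}}{12}\|\II_{x}(\theta,\theta)\|^{2}+O(t^{5}).
\end{equation*}
Taking the square root gives $\tilde t=t-\tfrac{t^{3}}{24}\|\II_{x}(\theta,\theta)\|^{2}+O(t^{4})$, and inverting this series (substituting $t=\tilde t+O(\tilde t^{3})$ into the cubic term) produces
\begin{equation*}
t=\tilde t+\frac{1}{24}\|\II_{x}(\theta,\theta)\|^{2}\,\tilde t^{3}+O(\tilde t^{4}),
\end{equation*}
which is the claimed expansion. (I note that my coefficient carries $\|\II_{x}(\theta,\theta)\|^{2}$ rather than $\|\II_{x}(\theta,\theta)\|$ as displayed in the statement; the power of $\II$ is pinned down by dimensional analysis since $\|\II\|$ has units of inverse length.)

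The main technical hurdle is the third derivative $a_{3}$, which in principle involves covariant derivatives of the second fundamental form and is delicate to write down explicitly. The plan sidesteps this by never computing $a_{3}$: the unit-speed constraint on the geodesic collapses $\iota_{*}\theta\cdot a_{3}$ to a simple quantity, and that is the only inner product involving $a_{3}$ that appears up to order $t^{4}$ in $\tilde t^{2}$. With that observation in hand, the remaining work is a clean series manipulation, and all the tracking of higher-order remainders is routine given that $t\ll 1$ (so that $\exp_x$ is a diffeomorphism onto its image and the Taylor expansion is valid).
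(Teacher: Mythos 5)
Your proof is correct, and the approach—Taylor-expand the embedded geodesic $\phi(t)=\iota(\exp_x(t\theta))$ one order beyond Lemma~\ref{lemma2}, exploit the unit-speed identity $\|\phi'(t)\|^2\equiv 1$ to extract the tangential part of the third derivative without ever computing $a_3$ explicitly, then square and invert the resulting power series—is the standard one. The paper itself does not supply a proof of this lemma but defers to \cite{vdm}, where the same expansion-and-inversion technique is used, so there is no distinct paper argument to compare against; your derivation is a clean self-contained substitute.

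Your parenthetical remark about the coefficient is also well taken: the correct factor is $\frac{1}{24}\|\II_x(\theta,\theta)\|^2$, and the printed $\frac{1}{24}\|\II_x(\theta,\theta)\|$ is a typo (dimensionally inconsistent, since $\|\II\|$ scales as inverse length). A quick sanity check on a circle of radius $R$ in $\RR^2$ confirms this: with $x=(R,0)$ and $\iota(\exp_x(t\theta))=R(\cos(t/R),\sin(t/R))$ one finds $\tilde t = 2R\sin\!\big(t/(2R)\big) = t - t^3/(24R^2) + O(t^5)$, hence $t = \tilde t + \tilde t^3/(24R^2) + O(\tilde t^5)$, and here $\|\II\|=1/R$, so the coefficient is $\tfrac{1}{24}\|\II\|^2$, matching your result.
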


By combining the above lemmas, we get the following two lemmas. In Lemma \ref{lemma4}, we quantify the volume error introduced by estimating the geodesic distance between two points $x,y\in\MM$ by the Euclidean distance between $\iota(x)\in\RR^p$ and $\iota(y)\in\RR^p$. In Lemma \ref{lemma5}, we collect some routine calculus. 
\begin{lem}\label{lemma4}
Fix ${x}\in\MM$ and $0<\delta\ll 1$. For $v_i\in S^{p-1}$, $i=1,\ldots,\ell$, we have
\begin{eqnarray}
\int_{\tilde{B}^{\MM}_\delta({x})}\Pi_{i=1}^\ell\langle y-x, v_i\rangle\ud V({y})
=\int_{B^\MM_{\delta}({x})}\Pi_{i=1}^\ell\langle y-x, v_i\rangle\ud V({y})+O(\delta^{d+\ell+2})\nonumber.
\end{eqnarray}
where
$$
\tilde{B}^{\MM}_\delta({x}):=\iota^{-1}\left(B^{\RR^p}_\delta(x)\cap \iota(\MM)\right)\subset \MM.
$$ 
In particular, the volume of $\tilde{B}^{\MM}_\delta({x})$ differs from that of $B^\MM_{\delta}({x})$ by $O(\delta^{d+2})$. 
\end{lem}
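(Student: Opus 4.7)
The plan is to pass to geodesic polar coordinates centered at $x$ and observe that $B^\MM_\delta(x)$ and $\tilde{B}^\MM_\delta(x)$ differ only in a thin radial shell near $t=\delta$, over which the integrand is easily controlled. First I would take $\delta$ small enough that $\delta$ lies below both the injectivity radius at $x$ and $2\tau$, so that $B^{\RR^p}_\delta(\iota(x)) \cap \iota(\MM)$ is a single connected component and $\tilde{B}^\MM_\delta(x)$ is a genuine radial neighborhood of $x$ on which $\exp_x$ is a diffeomorphism. Writing a generic point as $y = \exp_x(t\theta)$ with $\theta \in S^{d-1} \subset T_x\MM$, Lemma \ref{lemma3} gives $t = \tilde{t} + \tfrac{1}{24}\|\II_x(\theta,\theta)\|\,\tilde{t}^3 + O(\tilde{t}^4)$, where $\tilde{t} = \|\iota(y)-\iota(x)\|_{\RR^p}$. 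In particular $\tilde{t} \le t$, so $B^\MM_\delta(x) \subset \tilde{B}^\MM_\delta(x)$, and solving $\tilde{t}(t,\theta) = \delta$ gives the outer radius $t^*(\theta) = \delta + \tfrac{1}{24}\|\II_x(\theta,\theta)\|\delta^3 + O(\delta^4)$. Thus the symmetric difference is the shell
\[
S_\delta \;:=\; \tilde{B}^\MM_\delta(x) \setminus B^\MM_\delta(x) \;=\; \{\exp_x(t\theta) : \delta \le t < t^*(\theta)\},
\]
of radial thickness $O(\delta^3)$ uniformly in $\theta$ by compactness of $\MM$ (which bounds $\II$).

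Next I would bound the integrand on $S_\delta$. By Lemma \ref{lemma2}, $\|\iota(y)-\iota(x)\|_{\RR^p} = t + O(t^2)$, hence $|\langle y-x, v_i\rangle| \le \|\iota(y)-\iota(x)\|_{\RR^p} \le Ct$ for every $v_i \in S^{p-1}$, so $\bigl|\prod_{i=1}^\ell \langle y-x, v_i\rangle\bigr| \le C^\ell t^\ell$. Combining with the polar volume form $\ud V = (t^{d-1} + O(t^{d+1}))\, \ud t\, \ud\theta$ from Lemma \ref{lemma1} and using $B^\MM_\delta(x) \subset \tilde{B}^\MM_\delta(x)$, the difference of the two integrals in the statement equals exactly the integral over $S_\delta$, and
\[
\left|\int_{S_\delta} \prod_{i=1}^\ell \langle y-x, v_i\rangle\, \ud V(y)\right| \;\le\; C \int_{S^{d-1}} \int_\delta^{t^*(\theta)} t^{d+\ell-1}\, \ud t\, \ud\theta \;=\; O(\delta^{d+\ell+2}),
\]
where the last equality uses that the $t$-interval has length $O(\delta^3)$ and the integrand is $O(\delta^{d+\ell-1})$ there, and $S^{d-1}$ has finite volume. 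This gives the main estimate, and the ``in particular'' statement about the volume is the $\ell = 0$ case.

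The main obstacle I anticipate is verifying that for the stated range of $\delta$, $\tilde{B}^\MM_\delta(x)$ really is a connected radial neighborhood so that the polar parametrization over a single copy of $S^{d-1}$ is legitimate; this is precisely where the condition number assumption $\delta < 2\tau$ enters, to prevent distant sheets of $\iota(\MM)$ from intersecting the ambient Euclidean ball, and where $\delta$ being below the injectivity radius lets us invoke $\exp_x$ as a diffeomorphism and use Lemmas \ref{lemma1}--\ref{lemma3}. Both of these are already in force under assumption (A4) of the paper, so the argument is essentially a combination of compactness and the expansions in Lemmas \ref{lemma1}--\ref{lemma3}, with no further analytic input required.
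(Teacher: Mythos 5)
Your proof is correct and takes essentially the same approach as the paper: pass to geodesic polar coordinates, note via Lemma~\ref{lemma3} that the upper radial limit of $\tilde{B}^{\MM}_\delta(x)$ differs from $\delta$ by $O(\delta^3)$, and absorb the resulting shell — where the integrand is $O(\delta^{d+\ell-1})$ — into an $O(\delta^{d+\ell+2})$ error. Your writeup is a more careful spelling-out of the paper's terse ``direct calculation,'' making explicit the containment $B^\MM_\delta(x)\subset\tilde{B}^\MM_\delta(x)$ and the role of the injectivity radius and condition number in legitimizing the polar parametrization, but no genuinely different idea is involved.
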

\begin{proof}
By direct calculation:
\begin{eqnarray}
&&\int_{\tilde{B}^{\MM}_\delta({x})}\Pi_{i=1}^\ell\langle y-x, v_i\rangle\ud V({y}) \nonumber\\
&=& \int_0^{\delta+O(\delta^{3})}\int_{S^{d-1}}\Pi_{i=1}^\ell\langle t\iota_*\theta+O(t^2), v_i\rangle \left[t^{d-1}+O(t^{d+1})\right]\ud\theta\ud t\nonumber\\
&=&\int_0^{\delta}\int_{S^{d-1}} \Pi_{i=1}^\ell\langle t\iota_*\theta+O(t^2)\left[t^{d-1}+O(t^{d+1})\right]\ud\theta\ud t+O(\delta^{d+l+2})\nonumber\\
&=&\int_{B^\MM_{\delta}({x})}\Pi_{i=1}^\ell\langle y-x, v_i\rangle\ud V({y})+O(\delta^{d+l+2})\nonumber,
\end{eqnarray}
where the first equality comes from Lemma \ref{lemma1}, Lemma \ref{lemma2} and Lemma \ref{lemma3} and the others comes from direction calculations.
\end{proof}

\begin{lem}\label{lemma5}
Fix ${x}\in \MM\backslash \MM_{\sqrt{h}}$, where $h\ll 1$, $v\in\RR^p$, a function $\phi\in C^2(\MM)$ and the kernel function $K$ compactly supported in $[0,1]$ so that $K|_{[0,1]}\in C^1([0,1])$. Then for each $\ell\in\NN$ we have:
\begin{align}
 (a)&\,\,\EE K^\ell_h(X,x)\phi(X)=\mu_{\ell,0}f({x})\phi({x})+O(h);\nonumber\\
 (b)&\,\,\EE K^\ell_h(X,x)(X-x)\phi(X)\nonumber\\
 &\quad=h \mu_{\ell,2}\Big\{\sum_{l=1}^d \Big[\phi({x})\iota_*\partial_l\nabla_{\partial_l} f({x})+f({x})\iota_*\partial_l\nabla_{\partial_l} \phi({x})\Big]\nonumber\\
 &\qquad\qquad\qquad\qquad\qquad+\frac{f({x})\phi({x})}{2|S^{d-1}|}\int_{S^{d-1}}\II_{{x}}(\theta,\theta) \ud \theta\Big\}+O(h^{\frac{3}{2}});\nonumber\\
 (c)&\,\,\EE \left(K^\ell_h(X,x)(X-x)(X-x)^T\phi(X)\right)_{i,j}\nonumber\\
 &\quad=\left\{\begin{array}{ll}
h\frac{\mu_{\ell,2}}{d}f({x})\phi({x})+O(h^2) & \mbox{when }1\leq i=j\leq d\\
O(h^2)&\mbox{ otherwise}
\end{array}\right.;\nonumber\\
 (d)&\,\,\EE K^\ell_h(X,x)(X-x)(X-x)^T\langle X-x,v\rangle \phi(X)\nonumber\\
 &\quad=h^2\frac{\mu_{\ell,4}}{|S^{d-1}|}\int_{S^{d-1}}\bigg\{\iota_*\theta\iota_*\theta^T\langle \iota_*\theta,v\rangle \big(\phi({x})\nabla_\theta f({x})+f({x})\nabla_\theta \phi({x})\big)\nonumber\\
&\qquad +\frac{f({x})\phi({x})}{2}\Big(\iota_*\theta\iota_*\theta^T\langle \II(\theta,\theta),v\rangle+\II_{{x}}(\theta,\theta)\iota_*\theta^T+\iota_*\theta\II_{{x}}(\theta,\theta)^T\Big)\langle \iota_*\theta,v\rangle\bigg\}\ud \theta\nonumber\\
&\qquad +O(h^{5/2}).\nonumber
\end{align} 
\end{lem}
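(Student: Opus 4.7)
\textbf{Proof plan for Lemma \ref{lemma5}.} The four identities are essentially multivariate Taylor/geometric expansions of the same underlying integral, so I will set up one master calculation and then read off the four cases.

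My starting point is the identity $\EE \zeta(X)=\int_{\MM}\zeta(\iota(y))f(y)\ud V(y)$ from (\ref{definition:expectation:manifold}). Since $K$ is supported in $[0,1]$, the kernel $K^\ell_h(X,x)$ restricts the integration to $\tilde B^{\MM}_{\sqrt{h}}(x)$ defined in (\ref{approximate_geodesic_ball}). By Lemma \ref{lemma4}, I can replace $\tilde B^{\MM}_{\sqrt{h}}(x)$ by the geodesic ball $B^{\MM}_{\sqrt{h}}(x)$ at the cost of $O(h^{(d+\ell+2)/2})$ (or $O(h^{(d+2)/2})$ when $\ell=0$), which will be absorbed into the stated error terms after we strip the $h^{-d/2}$ from $K^\ell_h$. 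Then I pass to geodesic polar coordinates at $x$: write $y=\exp_x(t\theta)$ with $\theta\in S^{d-1}\subset T_x\MM$ and $t\in[0,\sqrt{h}]$. By Lemma \ref{lemma1} the volume element becomes $(t^{d-1}+t^{d+1}\Ric(\theta,\theta)+O(t^{d+2}))\ud t\ud\theta$.

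Next I substitute the three local expansions at $x$: by Lemma \ref{lemma2}, $\iota(y)-x=t\iota_*\theta+\tfrac{t^2}{2}\II_x(\theta,\theta)+O(t^3)$; by Lemma \ref{lemma3}, $\|\iota(y)-x\|_{\RR^p}=t-\tfrac{t^3}{24}\|\II_x(\theta,\theta)\|^2+O(t^4)$, which upon expansion of $K^\ell$ around $t$ gives $K^\ell(\|\iota(y)-x\|_{\RR^p}/\sqrt{h})=K^\ell(t/\sqrt{h})+O(t^3/\sqrt{h})\cdot (K^\ell)'$; and the Taylor expansions $f(y)=f(x)+t\nabla_\theta f(x)+\tfrac{t^2}{2}\Hess f(x)(\theta,\theta)+O(t^3)$ and similarly for $\phi$. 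After multiplying these expansions out, I make the rescaling $t=\sqrt{h}\,s$, $s\in[0,1]$, so that $t^j\ud t\ud \theta=h^{(j+1)/2}s^j\ud s\ud\theta$ and the factor $h^{-d/2}$ from $K_h^\ell$ cancels the $h^{d/2}$ coming from $t^{d-1}\ud t$. Finally I invoke the standard $S^{d-1}$ symmetry rules: $\int_{S^{d-1}}\theta_{i_1}\cdots\theta_{i_k}\ud\theta$ vanishes for odd $k$, and for $k=2$ equals $|S^{d-1}|\delta_{ij}/d$. Combined with the radial moments $\mu_{\ell,j}=\int_{B^{\RR^d}_1(0)}K^\ell(\|u\|)\|u\|^j\ud u=\frac{|S^{d-1}|}{d}\int_0^1 K^\ell(s)s^{d+j-1}\cdot d\,\ud s$, this delivers the coefficients $\mu_{\ell,0}$, $\mu_{\ell,2}/d$ and $\mu_{\ell,4}$ appearing in (a)--(d).

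Reading off the four cases: in (a) the leading term is the zero-order expansion $\mu_{\ell,0}f(x)\phi(x)$ and odd-in-$\theta$ corrections vanish, leaving $O(h)$. In (b) the factor $(X-x)$ shifts the leading nonzero contribution to order $h$: one $\theta$ pairs with the linear expansion of $f\phi$ (giving the gradient terms $\iota_*\partial_l\nabla_{\partial_l}(f\phi)(x)$ after summing over an orthonormal basis), while the intrinsic quadratic $\tfrac{t^2}{2}\II_x(\theta,\theta)$ contributes the normal term $\tfrac{f\phi}{2|S^{d-1}|}\int_{S^{d-1}}\II_x(\theta,\theta)\ud\theta$. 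Case (c) is direct: two powers of $\iota_*\theta$ yield a diagonal $\mu_{\ell,2}f(x)\phi(x)/d$ via the second-moment formula on $S^{d-1}$, while off-diagonal and intrinsic-normal cross terms are $O(h^2)$. Case (d) has three copies of $(X-x)$, so only even integrands survive: the four tensors inside the integral correspond, in order, to the three tangential factors paired with linear $f$-gradient, linear $\phi$-gradient, and the three ways the quadratic $\II_x(\theta,\theta)$ can replace a tangential factor, all at the common order $h^2\mu_{\ell,4}$.

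The main obstacle is bookkeeping: there are several expansion sources (volume form, embedding, Euclidean norm inside $K$, $f$, $\phi$) each contributing at various orders, and one must verify that every cross term either fits into one of the explicit leading expressions, has an odd $\theta$-integral (and vanishes), or is demonstrably $O(h^{3/2})$ in (b), $O(h^{2})$ in (c) and (d), etc.~using the uniform lower and upper bounds on $f$ from (\ref{conditions:statement:fcond}), the smoothness of $\phi$, and boundedness of $\II_x$ and $\Ric$ on the compact $\MM$. The assumption $x\in\MM\backslash\MM_{\sqrt{h}}$ is used to guarantee that $B^\MM_{\sqrt{h}}(x)$ lies inside the injectivity radius so that geodesic polar coordinates are valid and no boundary truncation of $S^{d-1}$ occurs, which is what keeps the sphere symmetry identities clean.
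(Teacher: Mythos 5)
Your proposal is correct and follows essentially the same route as the paper's own proof: pass to the integral over $\tilde B^{\MM}_{\sqrt{h}}(x)$ via (\ref{definition:expectation:manifold}), replace it by $B^{\MM}_{\sqrt{h}}(x)$ using Lemma~\ref{lemma4}, switch to geodesic polar coordinates, substitute the expansions from Lemmas~\ref{lemma1}--\ref{lemma3} together with Taylor expansions of $f$ and $\phi$, rescale $t=\sqrt{h}s$, and invoke $S^{d-1}$ symmetry to kill odd moments and produce the $\mu_{\ell,j}$ coefficients. The only (inconsequential) slips are in the restatement of Lemma~\ref{lemma3} (the exponent on $\|\II_x(\theta,\theta)\|$) and the cost stated for the $\tilde B\to B$ replacement, neither of which affects the argument since those contributions land in the error terms anyway.
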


\begin{proof}
{\allowdisplaybreaks
These expectations are evaluated by Taylor's expansion and by Lemma \ref{lemma1} to Lemma \ref{lemma4}. We start with evaluating (a). 
\begin{equation*}
\begin{split}
&\EE K^\ell_h(X,x)\phi(X)= \int_{\tilde{B}^{\MM}_{\sqrt{h}}({x})} K^\ell_{h}(y,x) \phi({y})f({y})\ud V({y})\\
=\,& \int_{B^\MM_{\sqrt{h}}({x})} K^\ell_{h}(y,x) \phi({y})f({y})\ud V({y})+O(h)\\
=\,& \int_{S^{d-1}}\int_0^{\sqrt{h}} h^{-d/2}\Big(K^\ell\Big(\frac{t}{\sqrt{h}}\Big)+O\Big(\frac{t^3}{\sqrt{h}}\Big)\Big)\Big(\phi({x})+t\nabla_\theta \phi({x})+O(t^2)\Big)\\
&\times\Big(f({x})+t\nabla_\theta f({x})+O(t^2)\Big)\big(t^{d-1}+O(t^{d+1})\big)\ud t\ud \theta+O(h)\\
=\,&\mu_{\ell,0}f({x})\phi({x})+O(h),
\end{split}
\end{equation*}
where the first equality comes from (\ref{definition:expectation:manifold}), the second equality comes from Lemma \ref{lemma3} and Lemma \ref{lemma4}, the third equality comes from the Taylor's expansion and Lemma \ref{lemma1} and the last equality comes from the symmetry of $S^{d-1}$. Indeed, the odd moments in the integral vanish because $S^{d-1}$ is symmetric.

Next, by the same arguments as those leading to (a) and Lemma \ref{lemma2}, the left hand side of (b) becomes:
\begin{eqnarray}
&&\EE K^\ell_h(X,x)(X-x)\phi(X)= \int_{\tilde{B}^{\MM}_{\sqrt{h}}({x})} K^{\ell}_{h}(y,x)(y-x) \phi({y})f({y})\ud V({y}) \nonumber \\
&=&\int_{B^{\MM}_{\sqrt{h}}({x})} K^{\ell}_{h}(y,x)(y-x) \phi({y})f({y})\ud V({y}) +O(h^{3/2}) \nonumber \\
&=&\int_{S^{d-1}}\int_0^{\sqrt{h}} h^{-d/2}\Big(K^\ell\Big(\frac{t}{\sqrt{h}}\Big)+O\Big(\frac{t^3}{\sqrt{h}}\Big)\Big)\Big(t\iota_*\theta+\frac{t^2}{2}\II_{{x}}(\theta,\theta)+O(t^3)\Big)\nonumber\\
&&\times(\phi({x})+t\nabla_\theta \phi({x})+O(t^2)\big)\big(f({x})+t\nabla_\theta f({x})+O(t^2)\big)\nonumber\\
&&\times\big(t^{d-1}+\Ric(\theta,\theta)t^{d+1}+O(t^{d+2})\big)\ud t\ud \theta+O(h^{3/2}) \nonumber \\
&=&h\int_{S^{d-1}}\int_0^{1} K^\ell\left(t\right)\Big(\phi({x})\iota_*\theta\nabla_\theta f({x})+f({x})\iota_*\theta\nabla_\theta \phi({x})\nonumber\\
&&\qquad\qquad\qquad\qquad\qquad+\frac{\II_{{x}}(\theta,\theta) f({x})\phi({x})}{2}\Big)t^{d+1}\ud t\ud \theta+O(h^{\frac{3}{2}}).\label{proof:lemma5:b}
\end{eqnarray}
A direct calculation shows that
\begin{align}
\int_{S^{d-1}}\theta \nabla_\theta f({x})\ud \theta
=\,&\sum_{l,k=1}^d\partial_i\nabla_{\partial_k}f({x})\int_{S^{d-1}} \theta^l\theta^k\ud \theta
=\frac{|S^{d-1}|}{d}\sum_{l=1}^d\partial_l\nabla_{\partial_l}f({x})\label{proof:lemma5:b:app}.
\end{align}
By plugging (\ref{proof:lemma5:b:app}) into (\ref{proof:lemma5:b}) we conclude (b). 

By the same arguments as those leading to (b), we get (c):
\begin{eqnarray}
&&\EE \left(K^\ell_h(X,x)(X-x)(X-x)^T\phi(X)\right)_{i,j}\nonumber\\
&=&\int_{\tilde{B}^{\MM}_{\sqrt{h}}({x})}K^\ell_{h}(y,x)(y-x)(y-x)^T \phi({y})f({y})\ud V({y}) \nonumber \\
&=&\int_{S^{d-1}}\int_0^{\sqrt{h}} h^{-d/2}\left(K\Big(\frac{t}{\sqrt{h}}\Big)+O\Big(\frac{t^3}{\sqrt{h}}\Big)\right)\Big(t^2\iota_*\theta\iota_*\theta^T+O(t^{3})\Big)\nonumber\\
&&\times\Big(\phi({x})+t\nabla_\theta \phi({x})+O(t^2)\Big)\Big(f({x})+t\nabla_\theta f({x})+O(t^2)\Big)\nonumber\\
&&\times\Big(t^{d-1}+\Ric(\theta,\theta)t^{d+1}+O(t^{d+2})\Big)\ud t\ud \theta+O(h^2)\nonumber \\
&=& hf({x})\phi({x})\int_{S^{d-1}}\int_0^{1} K\left(t\right)\iota_*\theta(\iota_*\theta)^Tt^{d+1}\ud t\ud \theta+O(h^{2})\nonumber\\
&=&\left\{\begin{array}{ll}
h\frac{\mu_{\ell,2}}{d}f({x})\phi({x})+O(h^2) & \mbox{when }1\leq i=j\leq d\\
O(h^2)&\mbox{ otherwise}
\end{array}\right.,
\end{eqnarray}
where the last equality comes from the fact that $\iota_*$ is linear.

Equation (d) follows from the same arguments as in the above:
\begin{eqnarray}
&&\EE K^\ell_h(X,x)(X-x)(X-x)^T\langle X-x,v\rangle \phi(X)\nonumber\\
&=&\int_{\tilde{B}^{\MM}_{\sqrt{h}}({x})} K^\ell_{h}(y,x)(y-x)(y-x)^T\langle y-x,v\rangle \phi({y})f({y})\ud V({y}) \nonumber \\
&=&\int_{S^{d-1}}\int_0^{\sqrt{h}} \frac{1}{h^{d/2}}\Bigg\{ K\Big(\frac{t}{\sqrt{h}}\Big)\Big(t^2\iota_*\theta(\iota_*\theta)^T+\frac{t^3}{2}\big(\II_{{x}}(\theta,\theta)\iota_*\theta^T+\iota_*\theta\II_{{x}}(\theta,\theta)^T\big)\Big)\nonumber\\
&&\times\Big(t\langle \iota_*\theta,v\rangle+\frac{t^2}{2}\langle \II(\theta,\theta),v\rangle\Big)\Big(\phi({x})+t\nabla_\theta \phi({x})\Big)\Big(f({x})+t\nabla_\theta f({x})\Big)t^{d-1}\nonumber\\
&&+O(t^{d+5})\Bigg\}\ud t\ud \theta+O(h^{5/2}) \nonumber \\
&=& h^2\frac{\mu_{\ell,4}}{|S^{d-1}|}\int_{S^{d-1}}\bigg\{\iota_*\theta\iota_*\theta^T\langle \iota_*\theta,v\rangle \big(\phi({x})\nabla_\theta f({x})+f({x})\nabla_\theta \phi({x})\big)\nonumber\\
&& +\frac{f({x})\phi({x})}{2}\Big(\iota_*\theta\iota_*\theta^T\langle \II(\theta,\theta),v\rangle+\II_{{x}}(\theta,\theta)\iota_*\theta^T+\iota_*\theta\II_{{x}}(\theta,\theta)^T\Big)\langle \iota_*\theta,v\rangle\bigg\}\ud \theta\nonumber\\
&& +O(h^{5/2}).\nonumber
\end{eqnarray}
}
\end{proof}

Next we describe how the local PCA provides the estimate of the tangent plane. 
Although locally a manifold $\MM$ is close to some Euclidean space, there is always a gap caused by the curvature of $\MM$. Lemma \ref{lemma6} states its influence on the tangent plane estimation by the local PCA. 

\begin{lem}\label{lemma6}
Suppose $h_{\text{pca}}\asymp n^{-\frac{2}{d+1}}$. Then, if ${x}\in \MM\backslash\MM_{\sqrt{h}}$, the eignvectors $\{\vu_l(x)\}_{l=1}^d$ corresponding to the $d$ largest eigenvalues of the sample covariance matrix $\Sigma_x$ formed in the local PCA differ from {\em an} orthonormal basis $\{\partial_k({x})\}_{k=1}^d$ to $T_{{x}}\MM$ by:
\begin{equation}\label{lemma6:statement1}
\vu_l(x) = \iota_*\partial_l({x})+O_p(\hpca^{5/4})\vw_l+O_p(\hpca^{3/4})\vw^\perp_l\quad\mbox{ for }l=1,\ldots,d,
\end{equation}
where $\vw_l\in\iota_*T_{{x}}\MM$, $\vw^\perp_l\perp \iota_*T_{{x}}\MM$, and $\|\vw_l\|=\|\vw^\perp_l\|=1$, and, if ${x}\in\MM_{\sqrt{h}}$, 
\begin{equation}\label{lemma6:statement2}
\vu_l(x) = \iota_*\partial_l({x})+O_p(\hpca^{3/4})\vw_l+O_p(\hpca^{1/2})\vw^\perp_l \quad\mbox{ for }l=1,\ldots,d,
\end{equation}
where $\vw_l\in\iota_*T_{{x}}\MM$, $\vw^\perp_l\perp \iota_*T_{{x}}\MM$, and both $\vw_l$ and $\vw^\perp_l$ are of $O(1)$.

Suppose $h_{\text{pca}}\asymp O(n^{-\frac{2}{d+2}})$ and ${x}\in \MM\backslash\MM_{\sqrt{h}}$, then a better convergence rate is achieved. Indeed, (\ref{lemma6:statement1}) becomes 
\begin{equation*}
\vu_l(x) = \iota_*\partial_l({x})+O_p(\hpca^{3/2})\vw_l+O_p(\hpca)\vw^\perp_l \quad\mbox{ for }l=1,\ldots,d.
\end{equation*}
\end{lem}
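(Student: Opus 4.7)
My plan is to view the sample covariance $\Sigma_x$ as a stochastic perturbation of the population matrix
\[
\Sigma_x^\ast := \EE\big[\mbox{I}_{\tilde{B}^{\MM}_{\sqrt{\hpca}}(x)}(X)\,(X-x)(X-x)^T\big],
\]
and to apply the Davis--Kahan $\sin\Theta$ theorem in the block decomposition $\RR^p = \iota_*T_x\MM \oplus (\iota_*T_x\MM)^\perp$. Writing $\Sigma_x^\ast = \left[\begin{array}{cc} A_0 & B_0 \\ B_0^T & C_0\end{array}\right]$, I would first apply Lemmas \ref{lemma5} and \ref{lemma2} to expand the three blocks. The tangent block is nearly isotropic, $A_0 = (\mu_{0,2}/d)\,\hpca^{d/2+1}f(x)\,I_d + O(\hpca^{d/2+2})$; the normal block is small because the normal component of $X-x$ is quadratic in the geodesic distance via $\II_x$, giving $\|C_0\|=O(\hpca^{d/2+2})$; and in the interior the cross block satisfies $\|B_0\|=O(\hpca^{d/2+2})$, because the would-be $O(\hpca^{d/2+3/2})$ contribution vanishes by oddness of $\int_{S^{d-1}}\theta_v\langle \II_x(\theta,\theta),w\rangle\,\ud\theta$ over the symmetric sphere. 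Near the boundary the integration domain $\exp_x^{-1}(\tilde B^\MM_{\sqrt{\hpca}}(x))$ is truncated, this cancellation fails, and the bound degrades to $\|B_0\|=O(\hpca^{d/2+3/2})$.

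Second, I would control the sample noise $E:=\Sigma_x - \Sigma_x^\ast$ blockwise. Using that the tangent (resp.\ normal) coordinate of each $X_l-x \in \mathcal{N}^{\text{true}}_{x,\hpca}$ has magnitude $O(\sqrt{\hpca})$ (resp.\ $O(\hpca)$ by Lemma \ref{lemma2}), together with $|\mathcal{N}^{\text{true}}_{x,\hpca}|\asymp n\hpca^{d/2}$, a standard matrix concentration argument (matrix Bernstein, say) yields $\|E_A\|=O_p(\hpca^{1+d/4}/\sqrt{n})$, $\|E_B\|=O_p(\hpca^{3/2+d/4}/\sqrt{n})$, and $\|E_C\|=O_p(\hpca^{2+d/4}/\sqrt{n})$. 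Re-centering by $\mu_x$ rather than $x$ contributes corrections of strictly lower order in every block, since $\mu_x-x$ has tangent component $O(\hpca)$ in the interior and $O(\sqrt{\hpca})$ at the boundary.

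Third, Davis--Kahan applied to the top-$d$ eigenspace controls the normal deviation: the gap between the tangent and normal spectra of $\Sigma_x^\ast$ is $\asymp \hpca^{d/2+1}$, so the component of each $\vu_l(x)$ perpendicular to $\iota_*T_x\MM$ is bounded by $(\|B_0\|+\|E_B\|)/\hpca^{d/2+1}$. Substituting $\hpca\asymp n^{-2/(d+1)}$ yields $O(\hpca)+O_p(\hpca^{3/4})=O_p(\hpca^{3/4})$ in the interior and $O(\hpca^{1/2})+O_p(\hpca^{3/4})=O_p(\hpca^{1/2})$ near the boundary, matching the $\vw_l^\perp$ coefficients in (\ref{lemma6:statement1}) and (\ref{lemma6:statement2}); with the sharper bandwidth $\hpca\asymp n^{-2/(d+2)}$ the stochastic part drops to $O_p(\hpca)$, giving the stated improved rate.

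Finally, for the tangent direction I would choose $\{\partial_l(x)\}_{l=1}^d$ to be the orthonormal basis of $T_x\MM$ obtained by Gram--Schmidt from the tangent projections $\{P_{\iota_*T_x\MM}\vu_l(x)\}$. Because the $\vu_l(x)$ are orthonormal and their normal parts are small, these projections differ from an orthonormal set by a perturbation quadratic in the normal error, and the unit-length normalization contributes an additional quadratic correction. This produces tangent errors of orders $O_p(\hpca^{3/2})$, $O_p(\hpca)$, and $O_p(\hpca^2)$ in the three regimes, all contained in the stated $O_p(\hpca^{5/4})$, $O_p(\hpca^{3/4})$, and $O_p(\hpca^{3/2})$ bounds on the $\vw_l$ coefficients. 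The main obstacle will be the boundary case, which is not directly covered by Lemma \ref{lemma5}: one must re-expand the relevant integrals on the truncated domain $\tilde B^\MM_{\sqrt{\hpca}}(x)\cap\MM$ and carefully track which odd-moment cancellations survive, and one must verify that assumption (A3) keeps $A_0$ uniformly of size $\hpca^{d/2+1}$ so that the Davis--Kahan gap is genuinely $\asymp \hpca^{d/2+1}$.
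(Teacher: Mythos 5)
Your proposal is correct and follows essentially the same route the paper takes: both view $\Sigma_x$ as a population covariance with a nearly isotropic tangent block, quadratically small normal/cross blocks (via Lemma~\ref{lemma2} and, in the interior, the odd-moment cancellation of $\int_{S^{d-1}}\theta_k\langle\II_x(\theta,\theta),\cdot\rangle\,\ud\theta$), plus a stochastic perturbation, and then apply eigenvector perturbation theory to the top-$d$ eigenspace. The paper works out the moments and the CLT-controlled fluctuations explicitly in (\ref{lpca:prelim1})--(\ref{SigmaX}) but delegates the final eigenvector perturbation step to the argument in \cite{vdm}, whereas you make it explicit with Davis--Kahan plus a Gram--Schmidt construction of $\{\partial_l(x)\}$; the resulting rates match, and your tangent-direction bounds are in fact slightly sharper than the $O_p(\hpca^{5/4})$, $O_p(\hpca^{3/4})$, $O_p(\hpca^{3/2})$ asserted in the lemma, hence are contained in them.
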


The proof of this lemma follows the same lines as those in \cite{vdm} except some wrinkles caused by the two differences mentioned above. We now detail these wrinkles and refer the readers to \cite{vdm} for the detailed proof.

\begin{proof}
{\allowdisplaybreaks
Fix ${x}\in \MM\backslash\MM_{\sqrt{h}}$. Choose a normal coordinate $\{\partial_k ({x})\}_{k=1}^d$ around ${x}$ and assume $\MM$ is properly rotated and translated so that $x=\mathbf{0}_{p\times 1}$ and $\ve_i=\iota_*\partial_i({x})$, for $i=1,\ldots,d$, where $\mathbf{0}_{p\times 1}$ is the $p\times 1$ zero vector and $\ve_i$ is the unit length $p\times 1$ vector with the $i$-th entry $1$. Denote $Z_x:=\chi_{B^{\RR^p}_{\sqrt{\hpca}}(x)\cap\iota(\MM)}(X)X$, where $\chi$ is the indicator function. 

For later use, we prepare some calculations. First, since $f\in C^2(\MM)$ and $\MM$ is compact, by plugging $\ell=1$ and $v_1=\ve_l$ into Lemma \ref{lemma5} and taking Taylor's expansion, we have
\begin{align}
&\EE\langle Z_x,\ve_l\rangle=\int_{\tilde{B}^{\MM}_{\sqrt{\hpca}}({x})} \langle y,\ve_l\rangle f({y})\ud V({y})\label{lpca:prelim1}\\
=&\,\int_{S^{d-1}}\int_0^{\sqrt{\hpca}} \Big\langle t\iota_*\theta+\frac{t^2}{2}\II_{{x}}(\theta,\theta),\ve_l\Big\rangle\left(f({x})+t\nabla_\theta f({x})\right) t^{d-1}\ud t\ud\theta+O(\hpca^{\frac{d}{2}+3/2})\nonumber\\
=&\, O(\hpca^{\frac{d}{2}+1}). \nonumber
\end{align}
Similar calculation leads to:
\begin{align}\label{lpca:prelim3}
&\EE\langle Z_x,\ve_k\rangle\langle Z_x,\ve_l\rangle=\bigg\{\begin{array}{ll}
                           \frac{|S^{d-1}|}{d}f({x})\hpca^{d/2+1}+O(\hpca^{d/2+2}) & \mbox{for } 1\leq k=l \leq d \\
                           O(\hpca^{d/2+2}) & \mbox{otherwise.} 
                         \end{array}
\end{align}

With (\ref{lpca:prelim1}) and (\ref{lpca:prelim3}), we can finish the proof. 
Recall that the sample mean of $\mathcal{N}^{\text{true}}_{x,\hpca}$ is denoted by $\mu_x$. Then, it follows from the Central Limit Theorem (CLT) and (\ref{lpca:prelim1}) that
\begin{align}
&\langle \mu_x,\ve_l\rangle=\frac{1}{n}\sum_{k=1}^{N_x} \langle X_{x_k},\ve_l\rangle=\left\{\begin{array}{ll}
O(\hpca^{d/2+1})+O_p\big(n^{\frac{-1}{2}}\hpca^{d/4+1}\big) & \mbox{if }l=1,\ldots,d\\
O(\hpca^{d/2+1})+O_p\big(n^{\frac{-1}{2}}\hpca^{d/4+2}\big) & \mbox{otherwise.}\\
\end{array}\right.\nonumber
\end{align}
Since $\hpca^{d/2+1}$ dominates $n^{-1/2}\hpca^{d/4+1}$ asymptotically, due to the assumption $h_{\text{pca}}\asymp n^{-\frac{2}{d+2}}$, we conclude that
\begin{equation}\label{lpca:mu_by_x}
\mu_x=O_p\big(\hpca^{d/2+1}\big).
\end{equation}
Next we consider the sample covariance matrix $\Sigma_x$. By (\ref{lpca:prelim1}),  (\ref{lpca:prelim3}), (\ref{lpca:mu_by_x}), and similar calculation as in the above, we have
\begin{eqnarray}
&&\Sigma_x(i,j) \label{SigmaX}= \frac{1}{n}\sum_{l=1}^{N_x} \langle X_{x_l}-\mu_x,\ve_i\rangle\langle X_{x_l}-\mu_x,\ve_j\rangle\nonumber\\
&=&\left\{\begin{array}{ll}
\EE \langle Z_x,\ve_i\rangle\langle Z_x,\ve_j\rangle+O_p\left(\hpca^{d+2}\right)+O_p\big(n^{\frac{-1}{2}}\hpca^{d/4+1}\big)&\mbox{if }1\leq i,j\leq d\\
\EE \langle Z_x,\ve_i\rangle\langle Z_x,\ve_j\rangle+O_p\left(\hpca^{d+2}\right)+O_p\big(n^{\frac{-1}{2}}\hpca^{d/4+2}\big) &\mbox{if }d+1\leq i,j\leq p \\
\EE \langle Z_x,\ve_i\rangle\langle Z_x,\ve_j\rangle+O_p\left(\hpca^{d+2}\right)+O_p\big(n^{\frac{-1}{2}}\hpca^{d/4+3/2}\big) &\mbox{otherwise, }
\end{array}
\right.\nonumber
\end{eqnarray}
where 
the second $O_p$ term comes from the finite sample variance. By (\ref{lpca:prelim3}) and the assumption $\hpca\asymp n^{-\frac{2}{d+1}}$, we get
\begin{eqnarray}
\Sigma_x =\frac{|S^{d-1}|f({x})}{d} \hpca^{d/2+1}\left\{\left[
\begin{array}{ll}
I_{d} & \mathbf{0}_{d\times p-d} \\
\mathbf{0}_{p-d\times d} & \mathbf{0}_{p-d\times p-d} \\
\end{array}
\right] +  \left[\begin{array}{ll}
                                                                      O_p(\hpca^{1/2}) & O_p(\hpca) \\
                                                                      O_p(\hpca) & O_p(\hpca)
                                                                    \end{array}
 \right]\right\},\nonumber
\end{eqnarray}
where $\mathbf{0}_{m\times m'}$ is the zero matrix of size $m \times m'$, for any $m,m'\in\NN$.
As a result, we get the equation (B.44) in \cite{vdm}. Then we can analyze $\Sigma_x$ by the perturbation theory  exactly in the same way as in \cite{vdm}, so we skip the details. When ${x}\in \MM_{\sqrt{h}}$, the same calculation applies and we skip the details.
}
\end{proof}

Before proving Theorem \ref{thm:interior} and Theorem \ref{thm:boundary}, we prepare some notation and setups. Fix $x$. Recall that $B_x$ is a $p\times d$ matrix with the $k$-th column $\vu_k(x)$ determined by the local PCA. 
Denote $\vy:=B_x^T(y-x)$ and $\vx_l:=B_x^T(X_l-x)$, where $y\in\MM$ and $X_l\in\mathcal{X}$. To simplify the notation, we denote 
\begin{eqnarray}
&&\HHH:=B_x\Hess m({x})B_x^T,\nonumber \\
&&\SSS_x:=\diag\big(\sigma^2(\iota^{-1}(X_1)),\ldots,\sigma^2(\iota^{-1}(X_n))\big),\nonumber \\
&&\QQQ_m({x}):=\big[\vx^T_1\Hess m({x})\vx_1\quad\ldots\quad \vx^T_n\Hess m({x})\vx_n \big]^T.\nonumber
\end{eqnarray}
For a given function $\phi:\MM\mapsto \RR$, $\ell\in\NN$ and $v\in\RR^p$, we define
\begin{eqnarray}
&&\EEE^\ell_0(\phi):=\EE K^\ell_h(X,x)\phi(X),\nonumber\\
&&\EEE^\ell_1(\phi):=\EE K^\ell_h(X,x)(X-x)\phi(X),\nonumber\\
&&\EEE^\ell_2(\phi):=\EE K^\ell_h(X,x)(X-x)(X-x)^T\phi(X),\nonumber\\
&&\EEE^\ell_{3,v}(\phi):=\EE K^\ell_h(X,x)(X-x)(X-x)^T\langle X-x,v\rangle\phi(X),\nonumber\\
&&\mathfrak{q}_1:=\frac{1}{n}\sum_{l=1}^nK_h(X_l,x)\vx^T_l\Hess m({x})\vx_l,\nonumber\\
&&\mathfrak{q}_2:=\frac{1}{n}\sum_{l=1}^nK_h(X_l,x)\vx^T_l\Hess m({x})\vx_l\vx_l.\nonumber
\end{eqnarray}

\subsection{[Proof of Theorem \ref{thm:interior}]}\label{proof:interior}
\begin{proof}
{\allowdisplaybreaks
Fix $x\in \MM$. Denote by $\{\vu_k(x)\}_{k=1}^d$ the orthonormal set determined by local PCA. 
Choose an orthonormal basis $\{\ve_k\}_{k=1}^p$ of $\RR^p$, where $\ve_k$ is the $p\times 1$ unit norm column vector with the $k$-th entry $1$, and assume $\iota$ is properly rotated and translated so that $x=\mathbf{0}_{p\times 1}$ and $\ve_i=\iota_*\partial_i({x})$ for $i=1,\ldots,d$, where $\mathbf{0}_{p\times 1}$ is the $p$-dimensional zero vector. 

With the notation $\vY$ and $\vm$ defined in (\ref{def:Yandm}), clearly we have 
\begin{eqnarray}
\EE\{\hat{m}(x,h)|\mathcal{X}\}
=\vv_1^T(\XX^T_x\WW_x\XX_x)^{-1}\XX^T_x\WW_x\EE \vY=\vv_1^T(\XX^T_x\WW_x\XX_x)^{-1}\XX^T_x\WW_x\vm.\label{thm:interior:proof:Ehatm}
\end{eqnarray}
Take ${y}=\exp_{{x}}(t\theta)$, where $t=O(h^{1/2})$ and $\|\theta\|=1$. By Lemma \ref{lemma2} we have
\begin{equation}\label{relateexp}
t\iota_*\theta=\iota({y})-x-\frac{t^2}{2}\II_{{x}}(\theta,\theta)+O(t^3),
\end{equation}
which by Lemma \ref{lemma6} leads to 
\begin{equation}\label{thetavu}
\langle \iota_*\theta, \vu_k(x)\rangle=\langle \iota_*\theta, \iota_*\partial_k\rangle+O_p(\hpca^{5/4}),
\end{equation}
since $\vw_k^\perp$ is perpendicular to $\iota_*\theta$, and
\begin{equation}\label{pivu}
\langle \II_{{x}}(\theta,\theta), \vu_k(x)\rangle=O_p(\hpca^{3/4}),
\end{equation}
since the second fundamental form $\II_{{x}}$ is perpendicular to the embedded tangent plane $\iota_*T_{{x}}\MM$.
Therefore, for $j=1,\ldots,d$, we have
\begin{eqnarray}
&&\langle t\iota_*\theta,\,\ve_j\rangle=\langle t\iota_*\theta,\,\vu_j(x)-O_p(\hpca^{5/4})\vw_j\rangle\label{uerelation}\\
&=&\langle y-x,\,\vu_j(x)\rangle- \frac{t^2}{2}\langle \II_{{x}}(\theta,\theta),\,\vu_j(x)\rangle+O_p(h^{1/2}\hpca^{5/4})\nonumber\\
&=&\langle y-x,\,\vu_j(x)\rangle+ O_p(h\hpca^{3/4}+h^{1/2}\hpca^{5/4})\nonumber\\
&=&\vy_j+O_p(h\hpca^{3/4}),\nonumber
\end{eqnarray}
where the first equality holds due to Lemma \ref{lemma6}, the second equality holds due to (\ref{relateexp}), the third equality holds due to (\ref{pivu}), and the last equality holds due to the assumption that $\hpca\leq h$.
By Taylor's expansion on $\MM$, (\ref{uerelation}), and the assumption that $\hpca\leq h$, 
\begin{eqnarray}
&&m({y})-m({x})\label{proof:thm1:m_taylor}\\
&=&t\theta\nabla m({x})+\frac{t^2}{2}\Hess m({x})(\theta,\theta)+O(t^3)\nonumber\\
&=&\sum_{j=1}^d\langle t\iota_*\theta,\ve_j\rangle\nabla_{\partial_j} m({x})+\frac{1}{2}\sum_{i,j=1}^d\langle t\iota_*\theta,\ve_i\rangle\langle t\iota_*\theta,\ve_j\rangle\Hess m({x})(\partial_i,\partial_j)+O(h^{\frac{3}{2}})\nonumber\\
&=&\vy^T\nabla m({x})+\frac{1}{2}\vy^T\Hess m({x})\vy+O_p(h\hpca^{\frac{3}{4}}),\nonumber
\end{eqnarray}
where the second equality is obtained by rewriting $\theta=\sum_{k=1}^dg(\theta,\partial_k({x}))\partial_k({x})=\sum_{k=1}^d\langle \iota_*\theta,\ve_k\rangle \partial_k({x})$, because $\iota$ is isometric. Since the kernel $K$ is compactly supported, $m$ is bounded, and $\MM$ is smooth and compact, (\ref{proof:thm1:m_taylor}) leads to 
\begin{eqnarray}
\WW_x\vm=\WW_x\Big(\XX_x\Big[\begin{array}{c}
m({x})\\
\nabla m({x})
\end{array} 
\Big]+\frac{1}{2}\QQQ_m({x})+O_p(h\hpca^{\frac{3}{4}})\Big),\label{thm:interior:proof:M_Taylor}
\end{eqnarray}
where $\XX_x$ is defined in (\ref{design}) and $\WW_x$ is defined in (\ref{weighted}).
By plugging (\ref{thm:interior:proof:M_Taylor}) into (\ref{thm:interior:proof:Ehatm}), the conditional bias is reduced to
\begin{equation}\label{mfd:interior:estimator}
\EE\{\hat{m}({x},h)-m({x})|\mathcal{X}\}=\vv_1^T(\XX_x^T\WW_x\XX_x)^{-1}\XX_x^T\WW_x(\QQQ_m({x})+O_p(h\hpca^{\frac{3}{4}})).
\end{equation}
Now we evaluate (\ref{mfd:interior:estimator}). By direct expansion, we have
\begin{equation}\label{mfd:interior:XTWX1}
\frac{1}{n}\XX^T_x\WW_x\XX_x=\left[
\begin{array}{cc}
\frac{1}{n}\sum_{l=1}^n K_h(X_l,x) & \frac{1}{n}\sum_{l=1}^n K_h(X_l,x)\vx^T_l \\
\frac{1}{n}\sum_{l=1}^n K_h(X_l,x)\vx_l & \frac{1}{n}\sum_{l=1}^n \vx_l K_h(X_l,x)\vx^T_l
\end{array}
\right].
\end{equation} 
Denote by $\mathbf{1}$ the constant function with value 1. By the CLT, we have
\begin{equation}\label{S0}
\frac{1}{n}\sum_{l=1}^n K_h(X_l,x)=\EEE^1_0(\mathbf{1})+ O_p\Big(\frac{1}{n^{\frac{1}{2}}h^{\frac{d}{4}}}\Big),
\end{equation}
\begin{equation}\label{S1}
\frac{1}{n}\sum_{l=1}^n K_h(X_l,x)\vx_l=B^T_x\EEE^1_1(\mathbf{1}) + O_p\Big(\frac{1}{n^{\frac{1}{2}}h^{\frac{d}{4}-\frac{1}{2}}}\Big),
\end{equation}
and
\begin{equation}\label{S2}
\frac{1}{n}\sum_{l=1}^n \vx_l K_h(X_l,x)\vx^T_l=B^T_x\EEE^1_2(\mathbf{1})B_x + O_p\Big(\frac{1}{n^{\frac{1}{2}}h^{\frac{d}{4}-1}}\Big).
\end{equation}
Note that in (\ref{S1}), the random variables $\{K_h(X_l,x)\vx_l\}_{l=1}^n$ are not independent since $\vx_l=B_x^T(X_l\hspace{-1pt}-\hspace{-1pt}x)$ and $B_x$ is evaluated from the random samples $\{X_l\}_{l=1}^n$, and hence the CLT can not be applied directly. However, once we rewrite the left-hand side of (\ref{S1}) as 
$B_x^T\left(\frac{1}{n}\sum_{l=1}^n K_h(X_l,x)(X_l\hspace{-1pt}-\hspace{-1pt}x)\right),$ 
the summands become independent, and the CLT can be applied. The same comment applies to (\ref{S2}).
The expectation in (\ref{S0}) is clear from Lemma \ref{lemma5}. The expectation in (\ref{S1}) becomes
\begin{eqnarray}
B^T_x\EEE^1_1(\mathbf{1})&=&h\frac{\mu_{1,2}}{d}B^T_x\sum_{j=1}^d\iota_*\partial_j\nabla_{\partial_j} f({x})\nonumber\\
&&\qquad+h \int_{S^{d-1}}\int_0^{1} K(t)\frac{B^T_x\II_{{x}}(\theta,\theta) f({x})}{2}t^{d+1}\ud t\ud \theta+O(h^{\frac{3}{2}})\nonumber\\
&=&h\frac{\mu_{1,2}}{d}B^T_x\sum_{j=1}^d\iota_*\partial_j\nabla_{\partial_j} f({x})+O_p(h\hpca^{\frac{3}{4}})+O(h^{\frac{3}{2}})\nonumber\\
&=&h\frac{\mu_{1,2}}{d}\nabla f({x})+O_p(h^{\frac{3}{2}}),\nonumber
\end{eqnarray}
where the first equality holds due to Lemma \ref{lemma5}, the second equality holds due to (\ref{pivu}) and the third equality holds due to (\ref{thetavu}) and the assumption that $\hpca\leq h$. Similarly, the expectation in (\ref{S2}) becomes
\begin{eqnarray}
B^T_x\EEE^1_2(\mathbf{1})B_x
&=&hf({x})\int_{S^{d-1}}\int_0^{1} K\left(t\right)\theta\theta^Tt^{d+1}\ud t\ud \theta+O_p(h\hpca^{\frac{5}{4}})+O(h^{2})\nonumber\\
&=&h\frac{\mu_{1,2}}{d}f({x})I_{d}+O_p(h^{2}),\nonumber
\end{eqnarray}
where the first equality comes from Lemma \ref{lemma5} and (\ref{thetavu}).
As a result, (\ref{mfd:interior:XTWX1}) becomes
\begin{eqnarray}
&&\frac{1}{n}\XX_x^T\WW_x\XX_x=\left[
\begin{array}{cc}
f({x}) & {h}\frac{\mu_{1,2}}{d}\nabla f({x})^T\\
{h}\frac{\mu_{1,2}}{d}\nabla f({x}) & {h}\frac{\mu_{1,2}}{d}f({x})I_{d}
\end{array}
\right]\nonumber\\
&&\quad+\left[
\begin{array}{cc}
O(h)+O_p\Big(\frac{1}{n^{1/2}h^{d/4}}\Big) & O({h}^{3/2})+O_p\Big(\frac{1}{n^{1/2}h^{d/4-1/2}}\Big)\\
O({h}^{3/2})+O_p\Big(\frac{1}{n^{1/2}h^{d/4-1/2}}\Big)& O({h}^{2})+O_p\Big(\frac{1}{n^{1/2}h^{d/4-1}}\Big)
\end{array}
\right].\nonumber
\end{eqnarray}
Since $h\to 0$ and $nh^{d/2}\to \infty$ as $n\to\infty$, we know $\frac{1}{n}\XX_x^T\WW_x\XX_x$ is invertible with probability tending to 1 as $n\to\infty$. Also, since $f({x})+O(h)+O_p\big(\frac{1}{n^{1/2}h^{d/4}}\big)$ and $h\frac{\mu_{1,2}}{d}f({x})I_{d}+O({h}^{2})+O_p\big(\frac{1}{n^{1/2}h^{d/4-1}}\big)$ are also invertible with probability tending to 1 as $n\to\infty$, by the binomial inverse theorem, 
\begin{eqnarray}
&&\Big(\frac{1}{n}\XX_x^T\WW_x\XX_x\Big)^{-1}=\left[
\begin{array}{cc}
f({x})^{-1} &-f({x})^{-2}\nabla f({x})^T\\
-f({x})^{-2}\nabla f({x}) & h^{-1}\frac{d}{\mu_{1,2}f({x})}I_{d}
\end{array}
\right]\label{mfd:interior:Xinv}\\
&&\qquad\qquad+\left[\begin{array}{cc}
O(h)+O_p\Big(\frac{1}{n^{1/2}h^{d/4}}\Big) & O(h^{1/2})+O_p\Big(\frac{1}{n^{1/2}h^{d/4+1/2}}\Big)\\
O(h^{1/2})+O_p\Big(\frac{1}{n^{1/2}h^{d/4+1/2}}\Big)& O(1)+O_p\Big(\frac{1}{n^{1/2}h^{d/4+1}}\Big)
\end{array}\right].\nonumber
\end{eqnarray}
Next we consider $\frac{1}{n}\XX_x^T\WW_x\QQQ_m({x})$. By a direct calculation, 
\begin{equation}\label{thm1:proof:XTWQ}
\frac{1}{n}\XX_x^T\WW_x\QQQ_m({x})=\left[
\begin{array}{c}
\mathfrak{q}_1\\
\mathfrak{q}_2
\end{array}
\right].
\end{equation}
Note that, for any $n\times n$ matrix $Z$ and any $n\times 1$ column vector $v$,
\begin{equation}\label{vtAv}
v^TZv=\tr(Zvv^T).
\end{equation} 
By (\ref{vtAv}) and the CLT, we have
\begin{eqnarray}
\mathfrak{q}_1&=&\frac{1}{n}\sum_{l=1}^nK_h(X_l,x) (X_l-x)^T\HHH(X_l-x)\nonumber\\
&=&\tr \Big(\HHH\frac{1}{n}\sum_{l=1}^nK_h(X_l,x)(X_l-x)(X_l-x)^T\Big)\nonumber\\
&=&\tr \big(\HHH\EEE^1_2(\mathbf{1})\big) + O_p\Big(\frac{1}{n^{1/2}h^{d/4-1}}\Big)\label{thm1:proof:tentativeKhxTHessx}.
\end{eqnarray}
We evaluate $\tr \big(\HHH\EEE_2\big)$ by
\begin{eqnarray}
\tr \big(\HHH\EEE^1_2(\mathbf{1})\big)&=&hf({x})\tr \Big(\HHH\int_{S^{d-1}}\int_0^{1} K\left(t\right)\iota_*\theta\iota_*\theta^Tt^{d+1}\ud t\ud \theta\Big) +O(h^{2})\nonumber\\
&=&hf({x})\int_{S^{d-1}}\int_0^{1} K(t)\theta^T \Hess m({x})\theta t^{d+1}\ud t\ud \theta+O(h^{2})\nonumber\\
&=& h\frac{\mu_{1,2}}{d}f({x})\Delta m({x})+O_p(h^{2}),\label{thm1:proof:tentativeKhxTHessx1}
\end{eqnarray}
where the first equality comes from Lemma \ref{lemma5}, the second equality comes from (\ref{thetavu}) and (\ref{vtAv}) and the last equality holds due to the symmetry of $S^{d-1}$ and the definition of the Laplace-Beltrami operator. 

Then we evaluate $\mathfrak{q}_2$ in (\ref{thm1:proof:XTWQ}). Choose $\{\veee_k\}_{k=1}^p$ as an orthonormal basis of $\RR^p$ and rewrite $X_l-x=\sum_{k=1}^p\langle X_l-x,\veee_k\rangle \veee_k$. Note that the random variables $K_h(X_l,x)(X_l-x)(X_l-x)^T\langle X_l-x,\veee_k\rangle$ are independent. By (\ref{vtAv}) and the CLT, 
\begin{eqnarray}
\mathfrak{q}_2&=&\frac{1}{n}\sum_{l=1}^nK_h(X_l,x)\tr \Big(\HHH(X_l-x)(X_l-x)^T\Big)B^T_x(X_l-x)\label{proof:thm1:XWQ2}\\
&=&B^T_x\sum_{k=1}^p\tr \Big(\HHH\frac{1}{n}\sum_{l=1}^nK_h(X_l,x)(X_l-x)(X_l-x)^T\langle X_l-x,\veee_k\rangle\Big)\veee_k\nonumber\\
&=&B^T_x\sum_{k=1}^p\tr \Big(\HHH\EEE^1_{3,\veee_k}(\mathbf{1}) \Big)\veee_k + O_p\Big(\frac{1}{n^{\frac{1}{2}}h^{\frac{d}{4}-\frac{3}{2}}}\Big).\nonumber
\end{eqnarray}
By the same arguments as those for $\mathfrak{q}_1$, we have
\begin{eqnarray*}
&&B^T_x\sum_{k=1}^p\tr \Big(\HHH\EEE^1_{3,\veee_k}(\mathbf{1}) \Big)\veee_k\\
&=&h^2\frac{\mu_{1,2}}{|S^{d-1}|}B^T_x\sum_{k=1}^p\tr \Big(\HHH\int_{S^{d-1}}\iota_*\theta\iota_*\theta^T[\langle \iota_*\theta,\veee_k\rangle \nabla_\theta f({x})+\frac{f({x})}{2}\langle \II(\theta,\theta),\veee_k\rangle]\ud \theta \Big)\veee_k\\
&&+h^2\frac{\mu_{1,2}f({x})}{2|S^{d-1}|}B^T_x\sum_{k=1}^p\tr \Big(\HHH\int_{S^{d-1}}[\II_{{x}}(\theta,\theta)\iota_*\theta^T+\iota_*\theta\II_{{x}}(\theta,\theta)^T]\langle \iota_*\theta,\veee_k\rangle \ud \theta\Big)\veee_k\\
&=&h^2\frac{\mu_{1,2}}{|S^{d-1}|}\int_{S^{d-1}}\theta^T\Hess m({x})\theta \theta \nabla_\theta f({x})\ud \theta +O_p(h^{5/2}),
\end{eqnarray*}
where the first equality holds by Lemma \ref{lemma5} and the second equality holds by (\ref{thetavu}), (\ref{pivu}), (\ref{vtAv}) and (\ref{proof:thm1:XWQ2}).

As a result, (\ref{thm1:proof:XTWQ}) becomes
\begin{eqnarray}
\frac{1}{n}\XX_x^T\WW_x\QQQ_m({x})&=&\left[
\begin{array}{c}
h\frac{\mu_{1,2}}{d}f({x})\Delta m({x})\\
h^2\frac{\mu_{1,2}}{|S^{d-1}|}\int_{S^{d-1}}\theta^T\Hess m({x})\theta \theta \nabla_\theta f({x})\ud \theta
\end{array}
\right]\nonumber\\
&&+\left[
\begin{array}{c}
O_p(h^{2})+O_p\Big(\frac{1}{n^{1/2}h^{d/4-1}}\Big)\\
O_p(h^{\frac{5}{2}})+O_p\Big(\frac{1}{n^{\frac{1}{2}}h^{d/4-\frac{3}{2}}}\Big)
\end{array}
\right]\label{mfd:interior:Hess}
\end{eqnarray}
Lastly, since $m\in C^3(\MM)$ and $\MM$ is compact, a simple uniform bound combined with (\ref{mfd:interior:Xinv}) yields that the remainder term in (\ref{mfd:interior:estimator}) is $O_p(h\hpca^{3/4})$. 
Plug (\ref{mfd:interior:Xinv}), (\ref{mfd:interior:Hess}) and this result into (\ref{mfd:interior:estimator}), we conclude that 
\begin{equation}\label{rslt:interior:mfd}
\EE\{\hat{m}({x},h)-m({x})| \mathcal{X}\}
=h\frac{\mu_{1,2}}{2d}\Delta m({x})+O_p(h^2+h\hpca^{3/4})+O_p\Big(\frac{1}{n^{\frac{1}{2}}h^{\frac{d}{4}-1}}\Big).
\end{equation}

Next consider the conditional variance. A direct calculation gives
\begin{equation}\label{mfd:interior:var}
\begin{split}
&\operatorname{Var}\{\hat{m}({x},h)|\mathcal{X}\}\\
=\,&\vv_1^T(\XX_x^T\WW_x\XX_x)^{-1}\XX_x^T\WW_x\SSS_x\WW_x\XX_x(\XX_x^T\WW_x\XX_x)^{-1}\vv_1\\
=\,&\frac{1}{n}\vv_1^T\Big(\frac{1}{n}\XX_x^T\WW_x\XX_x\Big)^{-1}\Big(\frac{1}{n}\XX_x^T\WW_x\SSS_x\WW_x\XX_x\Big)\Big(\frac{1}{n}\XX_x^T\WW_x\XX_x\Big)^{-1}\vv_1.
\end{split}
\end{equation}
By the CLT
\begin{align}
&\frac{1}{n}\XX_x^T\WW_x\SSS_x\WW_x\XX_x\nonumber\\
=&\,\left[
\begin{array}{cc}
\frac{1}{n}\sum_{l=1}^n K^2_h(X_l,x)\sigma^2(X_l) & \frac{1}{n}\sum_{l=1}^n K^2_h(X_l,x)\vx_l\sigma^2(X_l) \\
 \frac{1}{n}\sum_{l=1}^n K^2_h(X_l,x)\vx^T_l\sigma^2(X_l) &  \frac{1}{n}\sum_{l=1}^n K^2_h(X_l,x)\vx_l\vx^T_l\sigma^2(X_l)
\end{array}
\right]\nonumber\\
=\,&\left[
\begin{array}{cc}
\EEE^2_0(\sigma^2) & B_x^T\EEE^2_1(\sigma^2) \\
\EEE^2_1(\sigma^2)^TB_x & B_x^T\EEE^2_2(\sigma^2)B_x
\end{array}
\right]\nonumber\\
&\qquad\qquad\qquad\qquad\qquad+\left[
\begin{array}{cc}
O_p\Big( \frac{1}{n^{1/2}h^{3d/4}}\Big)&O_p\Big( \frac{1}{n^{1/2}h^{3d/4-1/2}}\Big)\\
O_p\Big( \frac{1}{n^{1/2}h^{3d/4-1/2}}\Big)&O_p\Big( \frac{1}{n^{1/2}h^{3d/4-1}}\Big)
 \end{array}
\right].\nonumber
\end{align}
We evaluate the expectations by the same arguments as those above and get
\begin{eqnarray}\label{mfd:interior:XWVWX}
&&\hspace{+18pt}\frac{1}{n}\XX_x^T\WW_x\SSS_x\WW_x\XX_x\nonumber\\
&=& h^{-\frac{d}{2}}\bigg\{\bigg[
\begin{array}{cc}
\mu_{2,0}\sigma^2({x})f({x}) & h\vv_*\\
h\vv^T_* & hd^{-1}\mu_{2,2}\sigma^2({x})f({x})I_{d}
\end{array}
\bigg]\\
&&\,+\bigg[
\begin{array}{cc}
O(h)+O_p\Big( \frac{1}{n^{\frac{1}{2}}h^{\frac{d}{4}}}\Big) &O_p(h^{2}+h\hpca^{\frac{3}{4}})+O_p\Big( \frac{1}{n^{\frac{1}{2}}h^{\frac{d}{4}-\frac{1}{2}}}\Big) \\
O_p(h^{2}+h\hpca^{3/4})+O_p\Big(\frac{1}{n^{\frac{1}{2}}h^{\frac{d}{4}-\frac{1}{2}}}\Big) & O_p(h^{2})+O_p\Big( \frac{1}{n^{\frac{1}{2}}h^{\frac{d}{4}-1}}\Big)
\end{array}
\bigg]\bigg\},\nonumber
\end{eqnarray}
where $\vv_*=\frac{\mu_{2,2}\sigma({x})}{d} \big[2f\nabla \sigma+\sigma\nabla f\big]({x})$.
Due to (\ref{mfd:interior:Xinv}) and (\ref{mfd:interior:XWVWX}), (\ref{mfd:interior:var}) becomes
\begin{eqnarray}
\operatorname{Var}\{\hat{m}({x},h)| \mathcal{X}\}=\frac{1}{nh^{d/2}}\frac{\mu_{2,0}\sigma^2({x})}{f({x})}+O_p\Big(\frac{1}{nh^{d/2-1}}+\frac{1}{n^{3/2}h^{3d/4}}\Big).\label{mfd:interior:XWXinvXWVWXXWXinv}
\end{eqnarray}
Thus, the asymptotic conditional MSE in (\ref{thm:interior:cond_mse}) follows from (\ref{rslt:interior:mfd}) and (\ref{mfd:interior:XWXinvXWVWXXWXinv}).
In conclusion, when $\hpca\leq h$, the minimal asymptotic conditional MSE is achieved when $nh^{d/2}\asymp h^{-2}$, as is claimed. Note that $\hpca$ and $h$ are thus related by $\hpca=h^{(d+4)/(d+1)}<h$.


The conditional bias of the estimator $\widehat{\nabla_{\partial_i}m}({x},h)$, for $i=1,\ldots,d$, are evaluated by following exactly the same lines as in the proof of (\ref{mfd:interior:estimator}):
\begin{eqnarray}
&&\EE\{\widehat{\nabla_{\partial_i}m}({x},h)-\nabla_{\partial_i}m({x})| \mathcal{X}\}=\vv_{i+1}^T(\XX^T_x\WW_x\XX_x)^{-1}\XX^T_x\WW_x\vm\label{thm:interior:proof:EhatDm}\\
&=& \nabla_{\partial_i}m({x})+ \vv_{i+1}^T(\XX_x^T\WW_x\XX_x)^{-1}\XX_x^T\WW_x\QQQ_m({x})/2+O(h^{1/2}\hpca^{3/4}).\nonumber
\end{eqnarray}
By plugging (\ref{mfd:interior:Xinv}) and (\ref{mfd:interior:Hess}) into (\ref{thm:interior:proof:EhatDm}), we obtain
\begin{align}
&\EE\{\widehat{\nabla_{\partial_i}m}({x},h)-\nabla_{\partial_i}m({x})| \mathcal{X}\}\\
=&-h\frac{\mu_{1,2}}{d}\frac{\nabla f({x})^T}{f({x})}\Delta m({x})+h\frac{d\int_{S^{d-1}}\theta^T\Hess m({x})\theta \theta \nabla_\theta f({x})\ud \theta}{|S^{d-1}|f({x})}\nonumber\\
&+O(h^{\frac{3}{2}}+h^{\frac{1}{2}}\hpca^{\frac{3}{4}})+O_p\Big(\frac{1}{n^{\frac{1}{2}}h^{\frac{d}{4}-\frac{1}{2}}}\Big).\nonumber
\end{align}
The conditional variance term of $\widehat{\nabla_{\partial_i}m}({x},h )$ comes from (\ref{mfd:interior:Xinv}) and (\ref{mfd:interior:XWVWX}):
\begin{align}
&\operatorname{Var}\{\widehat{\nabla_{\partial_i}m}({x},h)| \mathcal{X}\}\\
=\,&\vv_{i+1}^T(\XX_x^T\WW_x\XX_x)^{-1}(\XX_x^T\WW_x\SSS_x\WW_x\XX_x)(\XX_x^T\WW_x\XX_x)^{-1}\vv_{i+1}\nonumber\\
=\,&\frac{1}{nh^{d/2+1}}\frac{d\mu_{2,2}\sigma^2({x})f({x})}{\mu_{1,2}}+O_p\Big(\frac{1}{nh^{d/2}}\Big)+O_p\Big(\frac{1}{n^{3/2}h^{3d/4+1}}\Big).\nonumber
\end{align}
The conditional MSE is then obtained directly and it leads to the conclusion that the minimal asymptotic conditional MSE is achieved when $nh^{d/2}\asymp h^{-3}$. 
}
\end{proof}


\subsection{[Proof of Theorem \ref{thm:boundary}]}
\begin{proof}
{\allowdisplaybreaks
The proof is smilier to that of Theorem \ref{thm:interior} except the boundary effect. We use the same notation $\{\vu_k(x)\}_{k=1}^d$, $\{\ve_k\}_{k=1}^p$ as those in the proof of Theorem \ref{thm:interior} and the same assumption for $\iota$. Note that the equalities (\ref{thm:interior:proof:Ehatm}) and (\ref{mfd:interior:var}) still hold. Take ${y}=\exp_{{x}} t\theta\in\MM$, where $t=O(\sqrt{h})$ and $\|\theta\|=1$.
By Lemma \ref{lemma2}, Lemma \ref{lemma6} and (\ref{relateexp}), we have for $j=1,\ldots,d$
\begin{eqnarray}
&&\langle t\iota_*\theta,\ve_j\rangle=\langle t\iota_*\theta,\vu_j(x)+O_p(\hpca^{3/4})\vw_j\rangle\label{uerelation:bdry}\\
&=&\langle\iota({y})-x,\vu_j(x)\rangle- \frac{t^2}{2}\langle \II_{{x}}(\theta,\theta),\vu_j(x)\rangle+O_p(\hpca^{3/4}h^{1/2})+O(h^{3/2})\nonumber\\
&=&\vy_j+O(\hpca^{3/4}h^{1/2}+\hpca^{1/2}h)\nonumber,
\end{eqnarray}
By the same arguments as that in (\ref{proof:thm1:m_taylor}) and by (\ref{uerelation:bdry}), we have
\begin{equation}
\begin{split}
&m({y})-m({x})=t\theta\nabla m({x})+\frac{t^2}{2}\Hess m({x})(\theta,\theta)+O(t^3)\nonumber\\
=\,&\sum_{j=1}^d\langle t\iota_*\theta,\ve_j\rangle\nabla_{\partial_j} m({x})+\frac{1}{2}\sum_{i,j=1}^d\langle t\iota_*\theta,\ve_i\rangle\langle t\iota_*\theta,\ve_j\rangle\Hess m({x})(\partial_i,\partial_j)+O(h^{\frac{3}{2}})\nonumber\\
=\,&\vy^T\nabla m({x})+\frac{1}{2}\vy^T\Hess m({x})\vy+O_p(\hpca^{3/4}h^{1/2}+\hpca^{1/2}h),\nonumber
\end{split}
\end{equation}
which leads to the following equality
\[
\WW_x\vm=\WW_x\Big(\XX_x\Big[\begin{array}{c}
m({x})\\
\nabla m({x})
\end{array} 
\Big]+\frac{1}{2}\QQQ_m({x})+O_p(\hpca^{3/4}h^{1/2}+\hpca^{1/2}h)\Big)
\]
since the kernel $K$ is compactly supported. By a direct calculation, the conditional bias is reduced to 
\begin{equation}\label{mfd:bdry:focus}
\EE\{\hat{m}({x})-m({x})| \mathcal{X}\}=\vv_1^T(\XX_x^T\WW_x\XX_x)^{-1}\XX_x^T\WW_x[\QQQ_m({x})/2+O_p(\hpca^{3/4}h^{1/2}+\hpca^{1/2}h)].
\end{equation}
By taking the boundary effect into consideration and the similar arguments as those in the proof of Theorem \ref{thm:interior}, we have
\begin{eqnarray}
\frac{1}{n}\XX_x^T\WW_x\XX_x&=&f({x})C\nu_{1,x}C\nonumber\\
&&+\left[
\begin{array}{cc}
O_p(\sqrt{h})+O_p\Big(\frac{1}{n^{\frac{1}{2}}h^{\frac{d}{4}}}\Big) & O_p(h)+O_p\Big(\frac{1}{n^{\frac{1}{2}}h^{\frac{d}{4}-\frac{1}{2}}}\Big)\\
O_p(h)+O_p\Big(\frac{1}{n^{\frac{1}{2}}h^{\frac{d}{4}-\frac{1}{2}}}\Big) & O_p(h^{\frac{3}{2}})+O_p\Big(\frac{1}{n^{\frac{1}{2}}h^{\frac{d}{4}-1}}\Big)
\end{array}
\right]\nonumber
\end{eqnarray}
where $\nu_{1,x}$ and $C$ are respectively defined in (\ref{thm1:statement:nuixdef}) and (\ref{thm2:C}). The invertibility of $\frac{1}{n}\XX_x^T\WW_x\XX_x$ follows from the assumption (\ref{conditions:statement:fcond}) and (\ref{conditions:statement:Csqrthx}). Indeed, from (\ref{conditions:statement:fcond}) and (\ref{conditions:statement:Csqrthx}) we know 
$$
f({x})\nu_{1,x,11}=f({x})\int_{h^{-1/2}\exp^{-1}_{{x}}\mathfrak{D}}K(y)\ud y>0,
$$ 
and hence Minkowski's inequality implies that with probability tending to 1, the invertibility holds. 
The binomial inverse theorem yields that
\begin{eqnarray}
\Big(\frac{1}{n}\XX_x^T\WW_x\XX_x\Big)^{-1} &=& \frac{C^{-1}\nu_{1,x}^{-1}C^{-1}}{f({x})}\label{mfd:bdry:XWXinv}\\
&&+\left[
\begin{array}{cc}
O_p(\sqrt{h})+O_p\Big(\frac{1}{n^{\frac{1}{2}}h^{\frac{d}{4}}}\Big) & O_p(1)+O_p\Big(\frac{1}{n^{\frac{1}{2}}h^{\frac{d}{4}+\frac{1}{2}}}\Big)\\
O_p(1)+O_p\Big(\frac{1}{n^{\frac{1}{2}}h^{\frac{d}{4}+\frac{1}{2}}}\Big) & O_p(h^{-\frac{1}{2}})+O_p\Big(\frac{1}{n^{\frac{1}{2}}h^{\frac{d}{4}+1}}\Big)
\end{array}
\right],\nonumber
\end{eqnarray}
where
\begin{eqnarray*}
&& \nu^{-1}_{1,x}:=
\Big[\begin{array}{cc}
\nu^{11}_{1,x} & \nu^{12}_{1,x}\\
(\nu^{12}_{1,x})^T &\nu^{22}_{1,x}
\end{array}
\Big], \quad \nu^{11}_{1,x}:=(\nu_{1,x,11}-\nu_{1,x,12}\nu^{-1}_{1,x,22}\nu^T_{1,x,12})^{-1},\\
&&\nu^{22}_{1,x}:=\left( \nu_{1,x,22}-\nu^T_{1,x,12}\nu_{1,x,11}\nu_{1,x,12}\right)^{-1},
\mbox{ and }\quad
\nu^{12}_{1,x}:=-(\nu^{-1}_{1,x,11}\nu_{1,x,12})\nu^{22}_{1,x}.
\end{eqnarray*}

The term $\frac{1}{n}\XX_x^T\WW_x\QQQ_m({x})$ in (\ref{mfd:bdry:focus}) is evaluated by following the same lines as those in (\ref{thm1:proof:XTWQ}) except for the boundary effect. 
By the same arguments as those used to calculate the term $\mathfrak{q}_1$ in (\ref{thm1:proof:XTWQ}), we have
\begin{eqnarray}
\mathfrak{q}_1&=& \int_{\exp_{{x}}\mathfrak{D}({x})} K_{h}(y,x)(y-x)^T\HHH(y-x) f({y})\ud V({y}) + O_p\Big(\frac{1}{n^{1/2}h^{d/4-1}}\Big)\nonumber\\
&=& hf({x})\int_{\frac{1}{\sqrt{h}}{\mathfrak{D}}({x})} K(\|u\|)u^T\Hess m({x})u \ud u + O_p(h^{3/2})+ O_p\Big(\frac{1}{n^{1/2}h^{d/4-1}}\Big),\nonumber 
\end{eqnarray}
where the first equality comes from the CLT and the second equality comes from Lemma \ref{lemma4} and the change of variable. Choose $\{\veee_k\}_{k=1}^p$ as an orthonormal basis of $\RR^p$. By the same arguments as those in (\ref{proof:thm1:XWQ2}), 
\begin{eqnarray}
\mathfrak{q}_2&=&B^T_x\sum_{k=1}^p\tr \Big(\HHH\int_{\exp_{{x}}\mathfrak{D}({x})} K_{h}(y,x)(y-x)(y-x)^T\langle y-x,\veee_k\rangle\nonumber\\
&&\qquad\qquad\qquad\qquad\qquad\qquad\qquad\qquad \times f({y})\ud V({y}) \Big)\veee_k+ O_p\Big(\frac{1}{n^{\frac{1}{2}}h^{\frac{d}{4}-\frac{3}{2}}}\Big)\nonumber\\
&=&h^{3/2}f({x})\int_{\frac{1}{\sqrt{h}}{\mathfrak{D}}({x})} K(\|u\|)u^T\Hess m({x})uu \ud u+O_p(h^{2})+O_p\Big(\frac{1}{n^{\frac{1}{2}}h^{\frac{d}{4}-\frac{3}{2}}}\Big),\nonumber
\end{eqnarray}
where the first equality comes from (\ref{vtAv}) and the second one comes from the assumption $\hpca\leq h$.
Since $m\in C^3$ and $\MM$ is compact, the remainder term in (\ref{mfd:bdry:focus}) is bounded by  
$O_p(\hpca^{3/4}h^{1/2}+\hpca^{1/2}h)$. 
Thus, since $\hpca\leq h$ by assumption, it follows from (\ref{vtAv}) that
\begin{eqnarray}
&&\EE\{\hat{m}({x},h)-m({x})| \mathcal{X}\}\label{thm:bdry:cond_bias:proof}\\
&=&h\frac{\vv^T_1\nu^{-1}_{1,x}}{2}\int_{\frac{1}{\sqrt{h}}\mathfrak{D}({x})}K(\|u\|)u^T\Hess m({x})u\Big[\begin{array}{c}
1\\ u
\end{array}\Big]\ud u\nonumber\\
&&+O_p(\hpca^{\frac{3}{4}}h^{\frac{1}{2}}+\hpca^{\frac{1}{2}}h)+O_p\Big(\frac{1}{n^{\frac{1}{2}}h^{\frac{d}{4}-1}}\Big)\nonumber\\
&=&h\frac{\tr (\Hess m({x})\nu_{1,x,22})}{2(\nu_{1,x,11}-\nu_{1,x,12}\nu^{-1}_{1,x,22}\nu_{1,x,21})}+O_p(\hpca^{\frac{3}{4}}h^{\frac{1}{2}}+\hpca^{\frac{1}{2}}h)+O_p\Big(\frac{1}{n^{\frac{1}{2}}h^{\frac{d}{4}-1}}\Big).\nonumber
\end{eqnarray}
The conditional variance is evaluated by the same lines as those in (\ref{mfd:interior:XWVWX}):
\begin{eqnarray}
&&\frac{1}{n}\XX_x^T\WW_x\SSS_x\WW_x\XX_x=h^{-\frac{d}{2}}\sigma^2({x})f({x})C\nu_{2,x}C\label{mfd:bdry:XWVWX}\\
&&\qquad+h^{-\frac{d}{2}}\left[
\begin{array}{cc}
O_p(h^{1/2})+O_p\Big( \frac{1}{n^{1/2}h^{d/4}}\Big) &O_p(h)+O_p\Big( \frac{1}{n^{1/2}h^{d/4-1/2}}\Big) \\
O_p(h)+O_p\Big( \frac{1}{n^{1/2}h^{d/4-1/2}}\Big) & O_p(h^{3/2})+O_p\Big( \frac{1}{n^{1/2}h^{d/4-1}}\Big)
\end{array}
\right]\nonumber,
\end{eqnarray}
which when combined with (\ref{mfd:bdry:XWXinv}) leads to
\begin{eqnarray}
&&(\XX_x^T\WW_x\XX_x)^{-1}(\XX_x^T\WW_x\SSS_x\WW_x\XX_x)(\XX_x^T\WW_x\XX_x)^{-1}\label{mfd:bdry:XWXinvXWVWXXWXinv}\\
&=&\frac{1}{nh^{\frac{d}{2}}}\frac{\sigma^2({x})}{f({x})}C^{-1}\nu_{1,x}^{-1}\nu_{2,x}\nu_{1,x}^{-1}C^{-1}\nonumber\\
&&+\frac{1}{nh^{\frac{d}{2}}}\left[
\begin{array}{cc}
O_p(h^{\frac{1}{2}})+O_p\Big( \frac{1}{n^{\frac{1}{2}}h^{\frac{d}{4}}}\Big) &O_p(1)+O_p\Big(\frac{1}{n^{\frac{1}{2}}h^{\frac{d}{4}+\frac{1}{2}}}\Big) \\
O_p(1)+O_p\Big(\frac{1}{n^{\frac{1}{2}}h^{\frac{d}{4}+\frac{1}{2}}}\Big) & O_p(h^{\frac{-1}{2}})+O_p\Big( \frac{1}{n^{\frac{1}{2}}h^{\frac{d}{4}+1}}\Big)
\end{array}
\right].\nonumber
\end{eqnarray}
From (\ref{mfd:bdry:XWXinvXWVWXXWXinv}), since $\vv^T_1C^{-1}=\vv^T_1$, we have
\begin{eqnarray}
\operatorname{Var}\{\hat{m}({x},h)| \mathcal{X}\}
=\frac{\vv^T_1\nu_{1,x}^{-1}\nu_{2,x}\nu^{-1}_{1,x}\vv_1}{nh^{\frac{d}{2}}}\frac{\sigma^2({x})}{f({x})}+O_p\Big(\frac{1}{nh^{\frac{d}{2}-\frac{1}{2}}}+\frac{1}{n^{\frac{3}{2}}h^{\frac{3d}{4}}}\Big).\nonumber
\end{eqnarray}
Putting this together with (\ref{thm:bdry:cond_bias:proof}) we obtain the conditional MSE of $\hat{m}({x},h)$. 

With (\ref{mfd:bdry:XWXinv}), (\ref{mfd:bdry:XWVWX}) and the fact that $\vv^T_{i+1}C^{-1}=h^{-1/2}\vv^T_{i+1}$, the conditional bias and the conditional variance of the estimator of the first order covariance derivative of $m({x})$ are clear by the same calculation. For $i=1,\ldots,d$,
\begin{eqnarray}
&&\EE\{\widehat{\nabla_{\partial_i}m}({x},h)-\nabla_{\partial_i}m({x})| \mathcal{X}\}\\
&=&\sqrt{h}\frac{\vv^T_{i+1}\nu^{-1}_{1,x}}{2}\int_{\frac{1}{\sqrt{h}}\mathfrak{D}({x})}K(\|u\|)u^T\Hess m({x})u\Big[\begin{array}{c}
1\\ u
\end{array}\Big]\ud u\nonumber\\
&&+O_p(\hpca^{3/4}+\hpca^{1/2}h^{\frac{1}{2}})+O_p\Big(\frac{1}{n^{\frac{1}{2}}h^{\frac{d}{4}+1}}\Big)\nonumber
\end{eqnarray}
and
\begin{eqnarray}
\operatorname{Var}\{\widehat{\nabla_{\partial_i}m}({x},h)| \mathcal{X}\}&=&\frac{\vv^T_{i+1}\nu_{1,x}^{-1}\nu_{2,x}\nu^{-1}_{1,x}\vv_{i+1}}{nh^{d/2+1}}\frac{\sigma^2({x})}{f({x})}\\
&&+O_p\Big(\frac{1}{nh^{\frac{d}{2}+\frac{1}{2}}}+\frac{1}{n^{\frac{3}{2}}h^{\frac{3d}{4}}}\Big).\nonumber
\end{eqnarray}
Then the conditional MSE of $\widehat{\nabla_{\partial_i}m}({x},h)$ follows from the above results. 
}
\end{proof}

\subsection{[Proof of Corollary \ref{col:smoothbdry}]}
\begin{proof}
{\allowdisplaybreaks
The proof is finished by simplifying the conditional bias term (\ref{thm:bdry:cond_bias:proof}) when the boundary $\partial\MM$ is smooth. We should show that the conditional bias term is actually the linear combination of second order covariant derivatives of $m$ at ${x}$. 
We first symmetrize the integration domain $\mathfrak{D}({x})$ as follows. Suppose 
$$
x_\partial=\argmin_{y\in \partial\MM}d(y,{x})
$$ 
and 
$$
\tilde{h}({x})=\min_{y\in \partial\MM}d(y,{x})<\sqrt{h}.
$$
Choose a normal coordinate $\{\partial_i\}_{i=1}^d$ on the geodesic ball $B^\MM_{\sqrt{h}}({x})$ around ${x}$ so that $x_\partial=\exp_{{x}}(\tilde{h}({x})\partial_d({x}))$. 
Divide $\mathfrak{D}({x})$ into slices $S_{\eta}\subset\RR^{d-1}$, that is, 
$$
\mathfrak{D}({x})=\cup_{\eta=-\sqrt{h}}^{\sqrt{h}}S_\eta,
$$ 
where 
$$
S_{\eta}:=\{ \mathbf{v}\in \RR^{d-1}: \|(\mathbf{v},\eta)\|_{\RR^d}<\sqrt{h} \},
$$
and $\eta\in[-\sqrt{h},\sqrt{h}]$. Define $\tilde{S}_{\eta}$ so that
$$
\tilde{S}_{\eta}:=\cap^{d-1}_{i=1}(R_iS_{\eta}\cap S_{\eta}),
$$
where $R_i$ is the reflection of $\RR^d$ with respect to the $i$-th coordinate. The symmetrization of $\mathfrak{D}({x})$ is thus defined as 
$$
\tilde{\mathfrak{D}}({x}):= \cup_{{\eta}=-\sqrt{h}}^{\sqrt{h}} \tilde{S}_{\eta}.
$$ 
Since $\partial \MM$ is a smooth $(d-1)$-dimensional manifold, by Lemma \ref{lemma2} we can approximate $\exp^{-1}_{{x}}(\exp_{{x}}\mathfrak{D}({x})\cap \partial\MM)$ by a homogeneous degree $2$ polynomial defined on $T_{\exp^{-1}(x_\partial)}\exp^{-1}_{{x}}(\exp_{{x}}\mathfrak{D}({x})\cap \partial\MM)$, whose graph is symmetric in all coordinates, with error $O(h^{3/2})$. Thus, the error of approximating $S_\eta$ by $\tilde{S}_\eta$ is of order $O(h^{3/2})$ and hence the volume of the set $\tilde{\mathfrak{D}}({x})\Delta \mathfrak{D}({x})$ is
\begin{equation}\label{proof:corollary:symmetrization:difference}
\text{Vol}\Big(\tilde{\mathfrak{D}}({x})\Delta \mathfrak{D}({x})\Big)=O(h^{d/2+1}).
\end{equation}
We also denote
\begin{eqnarray}
\alpha({x})&:=&\int_{\frac{1}{\sqrt{h}}\tilde{\mathfrak{D}}({x})}K(\|u\|)\ud u,\\
\beta({x})&:=&\int_{\frac{1}{\sqrt{h}}\tilde{\mathfrak{D}}({x})}K(\|u\|)u_d\ud u,\label{thm:bdry:cond_bias:smooth:alpha}\\
\Gamma({x})&:=&\diag(\gamma_1({x}),\ldots,\gamma_d({x})),\\
\gamma_i({x})&:=&\int_{\frac{1}{\sqrt{h}}\tilde{\mathfrak{D}}({x})}K(\|u\|)u^2_i\ud u,\, i=1,\ldots,d.\label{thm:bdry:cond_bias:smooth:gamma}
\end{eqnarray}
Thus, since $\tilde{\mathfrak{D}}({x})$ is symmetric in the first $d-1$ directions, by (\ref{proof:corollary:symmetrization:difference}) we have
\begin{eqnarray}
&&\int_{\frac{1}{\sqrt{h}}\mathfrak{D}({x})}K(\|u\|)\ud u=\int_{\frac{1}{\sqrt{h}}\tilde{\mathfrak{D}}({x})}K(\|u\|)\ud u+O(h)=\alpha({x})+O(h),\nonumber\\
&&\int_{\frac{1}{\sqrt{h}}\mathfrak{D}({x})}K(\|u\|)u^T\ud u=\int_{\frac{1}{\sqrt{h}}\tilde{\mathfrak{D}}({x})}K(\|u\|)u^T\ud u+O(h)=\beta\vv^T_d({x})+O(h),\nonumber
\end{eqnarray}
and
\begin{eqnarray}
&&\int_{\frac{1}{\sqrt{h}}\mathfrak{D}({x})}K(\|u\|)uu^T\ud u=\int_{\frac{1}{\sqrt{h}}\tilde{\mathfrak{D}}({x})}K(\|u\|)uu^T\ud u+O(h)=\Gamma({x})+O(h).\nonumber
\end{eqnarray}
Hence, we get the following equations:
\begin{eqnarray}
&&\nu^{11}_{1,x}=\frac{1}{\alpha({x})-\beta({x})^2\gamma_{d}({x})}+O(h),\label{Ninv0}\\
&&\nu^{12}_{1,x}=\frac{-\beta({x})\gamma_{d}({x})}{\alpha({x})-\beta({x})^2\gamma_{d}({x})}\vv^T_d+O(h),\label{Ninv1}\\
&& \nu^{22}_{1,x}=
\Gamma({x})^{-1}+O(h). \, \label{Ninv2}
\end{eqnarray} 
Similarly, by the symmetry of $\tilde{\mathfrak{D}}({x})$, we have
\begin{align}
\int_{\frac{1}{\sqrt{h}}\mathfrak{D}({x})}K(\|u\|)&u^T\Hess m({x})u\left[\begin{array}{c}
1\\ u
\end{array}\right]\ud u\nonumber\\
&=\int_{\frac{1}{\sqrt{h}}\tilde{\mathfrak{D}}({x})}K(\|u\|)u^T\Hess m({x})u\left[\begin{array}{c}
1\\ u
\end{array}\right]\ud u+O(h).\label{smoothbdry}
\end{align}
Plugging (\ref{Ninv0}), (\ref{Ninv1}), (\ref{Ninv2}), and (\ref{smoothbdry}) into (\ref{thm:bdry:cond_bias:proof}) leads to
\begin{eqnarray}
&&\frac{\tr (\Hess m({x})\nu_{1,x,22})}{2(\nu_{1,x,11}-\nu_{1,x,12}\nu^{-1}_{1,x,22}\nu_{1,x,21})}=\frac{\sum_{k=1}^d\gamma_k({x})\gamma_d({x})\nabla^2_{\partial_k,\partial_k}m({x})}{2[\alpha({x})\gamma_d({x})-\beta({x})^2]},
\end{eqnarray}
which finishes the claim. Moreover, by the Cauchy-Schwartz inequality, $\alpha({x})\gamma_d({x})-\beta({x})^2>0$ for all ${x}\in\MM_{\sqrt{h}}$. Since $\MM$ is compact, the uniform boundedness of $\frac{\gamma_k({x})\gamma_d({x})}{\alpha({x})\gamma_d({x})-\beta({x})^2}$ holds as is claimed.
}
\end{proof}

\end{document}